\def\m{\overline{m}}
 \def\diss{{\rm diss}}
  \def\mod{{\rm mod~}}
\begin{document}
\newtheorem{corollary}{Corollary}[section]
\newtheorem{lemma}[corollary]{Lemma}
\newtheorem{theorem}[corollary]{Theorem}

\title{The maximum number of maximum dissociation sets  in   potted graphs }
\author{Zejun Huang\footnote{Corresponding author.
Email: mathzejun@gmail.com (Huang), szuxinwei@outlook.com (Zhang)}, Xinwei Zhang}
\date{School of Mathematical Sciences, Shenzhen University, Shenzhen 518060, China}
\maketitle
 \begin{abstract}
     \large{ A potted graph is a   unicyclic graph such that its cycle contains a unique vertex with degree larger than 2.  Given a graph $G$, a subset of $V(G)$ is a dissociation set of $G$ if it induces a subgraph with maximum degree at most one. A maximum dissociation set is a dissociation set with maximum cardinality. In this paper, we determine the maximum number of  maximum dissociation sets in a potted graph of order $n$ which contains a fixed cycle. Extremal potted graphs attaining this maximum number are also characterized. }

     \vspace{12pt}
     \noindent{{\bf Keywords:} dissociation number; maximum dissociation set; potted graph; unicyclic graph  }

     \noindent{\bf Mathematics Subject Classification:} 05C38, 05C69
 \end{abstract}

\section{Introduction}

\large {
Throughout this paper, we consider undirected simple   graphs. We follow the terminology in \cite{BM}. Given a graph $G$, we denote by $V(G)$ its vertex set. An {\it independent set} of a graph $G$ is a set of vertices such that no two of them are adjacent in $G$. A  {\it  maximal independent set} is an independent set not contained in any other independent sets, which is also called a   {\it  maximum independent set} if it has the maximum cardinality. The {\it independence number} of a graph $G$ is the cardinality of a maximum independent set, which is denoted by $\alpha(G)$.

 A {\it dissociation set} in a graph is a subset of vertices which induces a subgraph with vertex degree at most one. Similarly, a {\it maximal dissociation set} of a graph  is a dissociation set not contained in  any other dissociation sets,  and  a {\it maximum dissociation set} is a dissociation set with the maximum cardinality. The {\it dissociation number} of a graph $G$, denoted by ${\rm diss}(G)$, is the cardinality of a maximum dissociation set in $G$. A {\it $k$-path vertex cover} of a graph $G$ is a subset
of vertices $S\subset V(G) $ such that every path on $k$ vertices in $G$ contains a vertex from $S$.
 The dissociation set is a natural generalization of the independent set and it is closely related to the 3-path vertex cover, which has  applications in network security; see \cite{CHZ,ref01,Tujianhua, ref02}.

 Erd\H{o}s and Moser raised the problem of determining the maximum number of  maximal independent sets in a general graph  of order $n$ as well as the graphs attaining this maximum number, which was solved by Erd\H{o}s, and Moon and Moser independently; see \cite{MM}.
 Since then,
determining the maximum number  of maximal or maximum independent sets in special types of graphs has attracted lots of researches.  Along this line, the maximum numbers of maximal independent sets in trees, forests, connected graphs, $k$-connected graphs, unicyclic connected graphs,  connected graphs with at most $r$ cycles, bipartite graphs, and connected triangle-free graphs have been determined; see \cite{CJ,ref16,ref18,JC,ref19,ref21,Liu,Liu.J,ref22,ref23,ref25}. The maximum numbers of maximum independent sets in general trees, trees with a given independence number,   subcubic trees with a given independence number  and connected graphs with a given independence number  have been determined in \cite{ref15, MR, MR.,ref28}.

As a generalization of the above problems, Tu, Zhang and Shi \cite{ref13} initiated the study on the number of maximum dissociation sets in graphs. They determined the maximum number of maximum dissociation sets in trees of order $n$ as well as the trees attaining this maximum number.    The maximum numbers of maximum dissociation sets of trees, forests and subcubic trees with a given dissociation number are determined in \cite{ref11, TZD, ZTX}. The maximum numbers of maximal dissociation sets in general graphs, triangle-free graphs and forests are determined in \cite{ref6, ref12}. The minimum number of maximal dissociation sets in trees is determined in  \cite{ZQH}.

A {\it unicyclic graph} is a connected graph containing exactly one cycle. A {\it potted graph} is a unicyclic graph such that its cycle contains a unique vertex with degree larger than 2. It is clear that a potted graph is  obtained by identifying one vertex of a cycle $C$ and one vertex of a tree $T$, where $C$ and $T$ are vertex-disjoint.

Let $\mathcal{P}(n,k)$ be the set of potted graphs of order $n$ which contain a cycle of order $k$. Denote by $m(G)$ the number of maximum dissociation sets in a graph $G$.
Inspired by the previous results on the  number of  maximum  dissociation sets in special types of graphs,   we  study the following problem.

 {\bf Problem 1.} {\it Let $n, k$ be positive integers such that $n\ge k+1$.  Determine the  maximum value of $m(G)$ with $G\in \mathcal{P}(n,k)$ as well as the extremal potted graphs attaining this maximum value.}

 We solve Problem 1 in this paper. To state our main results, we need the following notations.

   We denote by $P_n$ and $C_n$ the path and the cycle of order $n$, respectively. Given $r$ vertex disjoint trees $T_1,\ldots, T_r$, we denote by $G(T_1,\ldots, T_r)$ a graph obtained from
   $T_1\cup T_2\cup\cdots\cup T_r$ by identifying $r$ leaves $v_1,\ldots,v_r$ with $v_i\in V(T_i)$ for $i=1,\ldots,r$. We also write $G(T_1,\ldots, T_r)$ as $G_w(T_1,\ldots, T_r)$ if we need to emphasize the identified vertex $w$. Suppose $C_k$ is vertex disjoint with $G_w(T_1,\ldots, T_r)$.
 We denote by  $C_k+_e G_w(T_1,\ldots, T_r)$  the graph obtained from  $C_k\cup G_w(T_1,\ldots, T_r)$ by adding an edge between $w$ and a vertex of $C_k$, while $C_k\odot G_w(T_1,\ldots, T_r)$ is the graph obtained from  $C_k\cup G_w(T_1,\ldots, T_r)$ by identifying $w$ and a vertex of $C_k$. Obviously, both $C_k+_e G_w(T_1,\ldots, T_r)$ and $C_k\odot G_w(T_1,\ldots, T_r)$ are potted graphs.

\noindent We define $\mathcal{T}_1,\ldots,\mathcal{T}_5 $, $\mathcal{G}_1,\ldots, \mathcal{G}_{9}$ to be the following families of graphs:
\begin{itemize}
  \item[]$\mathcal{T}_1=\{P_3, G_w(P_3,P_4,\ldots,P_4)\}$;
  \item[] $\mathcal{T}_2=\{G_w(P_2,P_3), G_w(P_2,P_3,T_1,\ldots, T_l): T_i= P_4\text{ or }K_{1,3}\text{ for }i=1,\ldots,l\}$;
 \item[]
   $\mathcal{T}_3=\{G_w(P_3,P_3), G_w(P_3,P_3,T_1,\ldots, T_{l}): T_i= P_4\text{ or }K_{1,3}\text{ for }i=1,\ldots,l\}$;
   \item[]
   $\mathcal{T}_4=\{G_w(P_2,P_2,P_3), G_w(P_2,P_2,P_3,T_1,\ldots, T_{l}): T_i=P_4\text{ or }K_{1,3}\text{ for }i=1,\ldots,l\}$;
   \item[]
   $\mathcal{T}_5 =\{G_w(P_2,P_2, P_2, P_2), G_w(P_2,P_2,P_2,P_2,T_1,\ldots, T_{l}): T_i= P_4\text{ or }K_{1,3}\text{ for }i=1,\ldots,l\}$;
   \item[]
   $\mathcal{G}_j=\{C_k+_e T: T\in\mathcal{T}_j\}$, $j=1,2,3,4,5$;
   \item[]
   $\mathcal{G}_6=\{ C_k\odot G_w(P_3,T_1,\ldots, T_{l}): T_i= P_4\text{ or }K_{1,3}\text{ for }i=1,\ldots,l\}$;
   \item[]
   $\mathcal{G}_7=\{C_k\odot G_w(P_2,P_2,T_1,\ldots, T_{l}): T_i=P_4\text{ or }K_{1,3}\text{ for }i=1,\ldots,l\}$;
   \item[]
    $\mathcal{G}_8=\{C_k\odot G_w(P_4,\ldots, P_4)\}$;
    \item[]
 $\mathcal{G}_9=\{ C_k\odot G_w(P_2,T_1,\ldots, T_{l}): T_i= P_4\text{ or }K_{1,3}\text{ for }i=1,\ldots,l\}$;
\end{itemize}
For $1\le i\le 5$,  $\mathcal{T}_i$ will be also written as $\mathcal{T}_i(w)$ if we need to emphasize the branch vertex $w$. It is clear that all graphs in $\mathcal{G}_{1},\ldots, \mathcal{G}_{9}$ are potted graphs. What follows are the diagrams of the above graphs.

\begin{figure}[H]
    \centering
     \subfigure{
    	\begin{tikzpicture}[scale=0.7]
    		\draw[fill=black](0.75,1.6) circle(0.08)
    		(1.5,1.6) circle(0.08)
    		(0,1.6) circle(0.08)
    		(0.4,2.4) circle(0.08);
    		\draw[fill=black](0.8,3.2) circle(0.08)
    		(-0.8,3.2) circle(0.08)
    		(1.2,4) circle(0.08)
    		(-1.2,4) circle(0.08)
    		(-0.4,2.4) circle(0.08);
    		\draw(1.5,1.6)--(0,1.6)
    		(0,1.6)--(1.2,4)
    		(0,1.6)--(-1.2,4);
    		\node at(0,4){$\cdots$};
    		\node at (0,1.35){$w$};
    		\node at (0,0.6){$\mathcal{T}_1(w)$};
    	\end{tikzpicture}
    }\hspace{0.45cm}
    \subfigure{
    	\begin{tikzpicture}[scale=0.7]
    		\draw[fill=black](0,0) circle(0.08)
    		(-0.5,0.8) circle(0.08)
    		(-0.5,1.6) circle(0.08)
    		(-0.5,2.4) circle(0.08)
    		(-1.5,0.8) circle(0.08)
    		(-1.5,1.6) circle(0.08)
    		(-1.5,2.4) circle(0.08)
    		(0.5,1.6) circle(0.08)
    		(1.5,1.6) circle(0.08)
    		(0.25,2.4) circle(0.08)
    		(0.75,2.4) circle(0.08)
    		(1.25,2.4) circle(0.08)
    		(1.75,2.4) circle(0.08)
    		(-0.75,0) circle(0.08)
    		(0.75,0) circle(0.08)
    		(1.5,0) circle(0.08);
    		\draw(0,0)--(1.5,0)
    		(0,0)--(-0.75,0)
    		(0,0)--(-1.5,0.8)
    		(0,0)--(-0.5,0.8)
    		(0,0)--(0.5,1.6)
    		(0,0)--(1.5,1.6)
    		(0.5,1.6)--(0.25,2.4)
    		(0.5,1.6)--(0.75,2.4)
    		(1.5,1.6)--(1.25,2.4)
    		(1.5,1.6)--(1.75,2.4)
    		(-1.5,0.8)--(-1.5,2.4)
    		(-0.5,2.4)--(-0.5,0.8);
    		\node at(-1,2.4){$\cdots$};
    		\node at(1,1.6){$\cdots$};
    		\node at (0,-0.25){$w$};
    		\node at (0,-1){$\mathcal{T}_2(w)$};
    	\end{tikzpicture}
    }\hspace{0.45cm}
    \subfigure{
    	\begin{tikzpicture}[scale=0.7]
    		\draw[fill=black](0,0) circle(0.08)
    		(-0.5,0.8) circle(0.08)
    		(-0.5,1.6) circle(0.08)
    		(-0.5,2.4) circle(0.08)
    		(-1.5,0.8) circle(0.08)
    		(-1.5,1.6) circle(0.08)
    		(-1.5,2.4) circle(0.08)
    		(0.5,1.6) circle(0.08)
    		(1.5,1.6) circle(0.08)
    		(0.25,2.4) circle(0.08)
    		(0.75,2.4) circle(0.08)
    		(1.25,2.4) circle(0.08)
    		(1.75,2.4) circle(0.08)
    		(-0.75,0) circle(0.08)
    		(0.75,0) circle(0.08)
    		(1.5,0) circle(0.08)
    		(-1.5,0) circle(0.08);
    		\draw(-1.5,0)--(1.5,0)
    		(0,0)--(-1.5,0.8)
    		(0,0)--(-0.5,0.8)
    		(0,0)--(0.5,1.6)
    		(0,0)--(1.5,1.6)
    		(0.5,1.6)--(0.25,2.4)
    		(0.5,1.6)--(0.75,2.4)
    		(1.5,1.6)--(1.25,2.4)
    		(1.5,1.6)--(1.75,2.4)
    		(-1.5,0.8)--(-1.5,2.4)
    		(-0.5,2.4)--(-0.5,0.8);
    		\node at(-1,2.4){$\cdots$};
    		\node at(1,1.6){$\cdots$};
    		\node at (0,-0.25){$w$};
    		\node at (0,-1){$\mathcal{T}_3(w)$};
    	\end{tikzpicture}
    }\hspace{0.45cm}
    \subfigure{
    	\begin{tikzpicture}[scale=0.7]
    		\draw[fill=black](0,0) circle(0.08)
    		(-0.5,0.8) circle(0.08)
    		(-0.5,1.6) circle(0.08)
    		(-0.5,2.4) circle(0.08)
    		(-1.5,0.8) circle(0.08)
    		(-1.5,1.6) circle(0.08)
    		(-1.5,2.4) circle(0.08)
    		(0.5,1.6) circle(0.08)
    		(1.5,1.6) circle(0.08)
    		(0.25,2.4) circle(0.08)
    		(0.75,2.4) circle(0.08)
    		(1.25,2.4) circle(0.08)
    		(1.75,2.4) circle(0.08)
    		(-0.55,-0.4) circle(0.08)
    		(0.75,0) circle(0.08)
    		(1.5,0) circle(0.08)
    		(-0.75,0) circle(0.08);
    		\draw(0,0)--(1.5,0)
    		(0,0)--(-0.55,-0.4)
    		(0,0)--(-1.5,0.8)
    		(0,0)--(-0.5,0.8)
    		(0,0)--(0.5,1.6)
    		(0,0)--(1.5,1.6)
    		(0.5,1.6)--(0.25,2.4)
    		(0.5,1.6)--(0.75,2.4)
    		(1.5,1.6)--(1.25,2.4)
    		(1.5,1.6)--(1.75,2.4)
    		(-1.5,0.8)--(-1.5,2.4)
    		(-0.5,2.4)--(-0.5,0.8)
    		(-0.75,0)--(0,0);
    		\node at(-1,2.4){$\cdots$};
    		\node at(1,1.6){$\cdots$};
    		\node at (0.1,-0.25){$w$};
    		\node at (0,-1){$\mathcal{T}_4(w)$};
    	\end{tikzpicture}
    }\hspace{0.45cm}
    \subfigure{
    	\begin{tikzpicture}[scale=0.7]
    		\draw[fill=black](0,0) circle(0.08)
    		(-0.5,0.8) circle(0.08)
    		(-0.5,1.6) circle(0.08)
    		(-0.5,2.4) circle(0.08)
    		(-1.5,0.8) circle(0.08)
    		(-1.5,1.6) circle(0.08)
    		(-1.5,2.4) circle(0.08)
    		(0.5,1.6) circle(0.08)
    		(1.5,1.6) circle(0.08)
    		(0.25,2.4) circle(0.08)
    		(0.75,2.4) circle(0.08)
    		(1.25,2.4) circle(0.08)
    		(1.75,2.4) circle(0.08)
    		(-0.55,-0.4) circle(0.08)
    		(0.55,-0.4) circle(0.08)
    		(0.75,0) circle(0.08)
    		(-0.75,0) circle(0.08);
    		\draw(0,0)--(0.55,-0.4)
    		(0,0)--(-0.55,-0.4)
    		(0,0)--(-1.5,0.8)
    		(0,0)--(-0.5,0.8)
    		(0,0)--(0.5,1.6)
    		(0,0)--(1.5,1.6)
    		(0.5,1.6)--(0.25,2.4)
    		(0.5,1.6)--(0.75,2.4)
    		(1.5,1.6)--(1.25,2.4)
    		(1.5,1.6)--(1.75,2.4)
    		(-1.5,0.8)--(-1.5,2.4)
    		(-0.5,2.4)--(-0.5,0.8)
    		(-0.75,0)--(0.75,0);
    		\node at(-1,2.4){$\cdots$};
    		\node at(1,1.6){$\cdots$};
    		\node at (0,-0.27){$w$};
    		\node at (0,-1){$\mathcal{T}_5(w)$};
    	\end{tikzpicture}
    }

    \subfigure{
    \begin{tikzpicture}[scale=0.7]
         \draw[fill=black](0,0.8) circle(0.08)
        [yshift=0.8cm](0,0) circle(0.08)
        [yshift=0.8cm](0,0) circle(0.08)
        [yshift=0.8cm,xshift=0.4cm](0,0) circle(0.08);
        \draw[fill=black](0.8,3.2) circle(0.08)
        (0.565685425,0.565685425) circle(0.08)
        (-0.565685425,0.565685425) circle(0.08)
        (-0.8,3.2) circle(0.08)
        (0.6,1.6) circle(0.08)
        (1.2,1.6) circle(0.08)
        (1.2,4) circle(0.08)
        (-1.2,4) circle(0.08)
        (-0.4,2.4) circle(0.08);
        \draw(0,0.8)--(0,1.6)
        (0,1.6)--(1.2,4)
        (0,1.6)--(1.2,1.6)
        (0,1.6)--(-1.2,4);
        \node at(0,4){$\cdots$};
        \draw[dashed](0,0) circle(0.8);
        \draw(0.565685425,0.565685425) arc(45:135:0.8);
        \node at (0,-1.3){$\mathcal{G}_1$};
    \end{tikzpicture}
    }\hspace{0.4cm}
    \subfigure{
    \begin{tikzpicture}[scale=0.7]
         \draw[fill=black](0,0) circle(0.08)
        (-0.5,0.8) circle(0.08)
        (-0.5,1.6) circle(0.08)
        (-0.5,2.4) circle(0.08)
        (-1.5,0.8) circle(0.08)
        (-1.5,1.6) circle(0.08)
        (-1.5,2.4) circle(0.08)
        (0.5,1.6) circle(0.08)
        (1.5,1.6) circle(0.08)
        (0.25,2.4) circle(0.08)
        (0.75,2.4) circle(0.08)
        (1.25,2.4) circle(0.08)
        (1.75,2.4) circle(0.08)
        (-0.8,0) circle(0.08)
        (0.8,0) circle(0.08)
        (1.6,0) circle(0.08);
        \draw(-0.8,0)--(1.6,0)
        (0,0)--(-1.5,0.8)
        (0,0)--(-0.5,0.8)
        (0,0)--(0.5,1.6)
        (0,0)--(1.5,1.6)
        (0.5,1.6)--(0.25,2.4)
        (0.5,1.6)--(0.75,2.4)
        (1.5,1.6)--(1.25,2.4)
        (1.5,1.6)--(1.75,2.4)
        (-1.5,0.8)--(-1.5,2.4)
        (-0.5,2.4)--(-0.5,0.8)
        (0,-0.8)--(0,0);
        \node at(-1,2.4){$\cdots$};
        \node at(1,1.6){$\cdots$};
        \draw[fill=black](0,-0.8) circle(0.08)
        (0.565685425,-1.034314575) circle(0.08)
        (-0.565685425,-1.034314575) circle(0.08);
        \draw[dashed](0,-1.6) circle(0.8);
        \draw(0.565685425,-1.034314575) arc(45:135:0.8);
        \node at (0,-2.9){$\mathcal{G}_2$};
    \end{tikzpicture}
    }\hspace{0.4cm}
    \subfigure{
    \begin{tikzpicture}[scale=0.7]
    \draw[fill=black](0,0) circle(0.08)
        (-0.5,0.8) circle(0.08)
        (-0.5,1.6) circle(0.08)
        (-0.5,2.4) circle(0.08)
        (-1.5,0.8) circle(0.08)
        (-1.5,1.6) circle(0.08)
        (-1.5,2.4) circle(0.08)
        (0.5,1.6) circle(0.08)
        (1.5,1.6) circle(0.08)
        (0.25,2.4) circle(0.08)
        (0.75,2.4) circle(0.08)
        (1.25,2.4) circle(0.08)
        (1.75,2.4) circle(0.08)
        (-0.8,0) circle(0.08)
        (0.8,0) circle(0.08)
        (1.6,0) circle(0.08)
        (-1.6,0) circle(0.08);
        \draw(-1.6,0)--(1.6,0)
        (0,0)--(-1.5,0.8)
        (0,0)--(-0.5,0.8)
        (0,0)--(0.5,1.6)
        (0,0)--(1.5,1.6)
        (0.5,1.6)--(0.25,2.4)
        (0.5,1.6)--(0.75,2.4)
        (1.5,1.6)--(1.25,2.4)
        (1.5,1.6)--(1.75,2.4)
        (-1.5,0.8)--(-1.5,2.4)
        (-0.5,2.4)--(-0.5,0.8)
        (0,-0.8)--(0,0);
        \node at(-1,2.4){$\cdots$};
        \node at(1,1.6){$\cdots$};
        \draw[fill=black](0,-0.8) circle(0.08)
        (0.565685425,-1.034314575) circle(0.08)
        (-0.565685425,-1.034314575) circle(0.08);
        \draw[dashed](0,-1.6) circle(0.8);
        \draw(0.565685425,-1.034314575) arc(45:135:0.8);
        \node at (0,-2.9){$\mathcal{G}_3$};
    \end{tikzpicture}
    }\hspace{0.4cm}
    \subfigure{
    \begin{tikzpicture}[scale=0.7]
         \draw[fill=black](0,0) circle(0.08)
        (-0.5,0.8) circle(0.08)
        (-0.75,-0.4) circle(0.08)
        (-0.5,1.6) circle(0.08)
        (-0.5,2.4) circle(0.08)
        (-1.5,0.8) circle(0.08)
        (-1.5,1.6) circle(0.08)
        (-1.5,2.4) circle(0.08)
        (0.5,1.6) circle(0.08)
        (1.5,1.6) circle(0.08)
        (0.25,2.4) circle(0.08)
        (0.75,2.4) circle(0.08)
        (1.25,2.4) circle(0.08)
        (1.75,2.4) circle(0.08)
        (-0.75,0) circle(0.08)
        (0.8,0) circle(0.08)
        (1.6,0) circle(0.08);
        \draw(-0.75,0)--(1.6,0)
        (0,0)--(-1.5,0.8)
        (0,0)--(-0.5,0.8)
        (0,0)--(0.5,1.6)
        (0,0)--(1.5,1.6)
        (0.5,1.6)--(0.25,2.4)
        (0.5,1.6)--(0.75,2.4)
        (1.5,1.6)--(1.25,2.4)
        (1.5,1.6)--(1.75,2.4)
        (-1.5,0.8)--(-1.5,2.4)
        (-0.5,2.4)--(-0.5,0.8)
        (0,-0.8)--(0,0)
        (-0.75,-0.4)--(0,0);
        \node at(-1,2.4){$\cdots$};
        \node at(1,1.6){$\cdots$};
        \draw[fill=black](0,-0.8) circle(0.08)
        (0.565685425,-1.034314575) circle(0.08)
        (-0.565685425,-1.034314575) circle(0.08);
        \draw[dashed](0,-1.6) circle(0.8);
        \draw(0.565685425,-1.034314575) arc(45:135:0.8);
        \node at (0,-2.9){$\mathcal{G}_4$};
    \end{tikzpicture}
    }\hspace{0.4cm}
    \subfigure{
    \begin{tikzpicture}[scale=0.7]
         \draw[fill=black](0,0) circle(0.08)
        (-0.5,0.8) circle(0.08)
        (-0.75,-0.4) circle(0.08)
        (-0.5,1.6) circle(0.08)
        (-0.5,2.4) circle(0.08)
        (-1.5,0.8) circle(0.08)
        (-1.5,1.6) circle(0.08)
        (-1.5,2.4) circle(0.08)
        (0.5,1.6) circle(0.08)
        (1.5,1.6) circle(0.08)
        (0.25,2.4) circle(0.08)
        (0.75,2.4) circle(0.08)
        (1.25,2.4) circle(0.08)
        (1.75,2.4) circle(0.08)
        (-0.75,0) circle(0.08)
        (0.75,-0.4) circle(0.08)
        (0.75,0) circle(0.08);
        \draw(-0.75,0)--(0.75,0)
        (0,0)--(-1.5,0.8)
        (0,0)--(-0.5,0.8)
        (0,0)--(0.5,1.6)
        (0,0)--(1.5,1.6)
        (0.5,1.6)--(0.25,2.4)
        (0.5,1.6)--(0.75,2.4)
        (1.5,1.6)--(1.25,2.4)
        (1.5,1.6)--(1.75,2.4)
        (-1.5,0.8)--(-1.5,2.4)
        (-0.5,2.4)--(-0.5,0.8)
        (0,-0.8)--(0,0)
        (-0.75,-0.4)--(0,0)
        (0.75,-0.4)--(0,0);
        \node at(-1,2.4){$\cdots$};
        \node at(1,1.6){$\cdots$};
        \draw[fill=black](0,-0.8) circle(0.08)
        (0.565685425,-1.034314575) circle(0.08)
        (-0.565685425,-1.034314575) circle(0.08);
        \draw[dashed](0,-1.6) circle(0.8);
        \draw(0.565685425,-1.034314575) arc(45:135:0.8);
        \node at (0,-2.9){$\mathcal{G}_5$};
    \end{tikzpicture}
    }

    \subfigure{
    \begin{tikzpicture}[scale=0.7]
         \draw[fill=black](0,0) circle(0.08)
        (-0.5,0.8) circle(0.08)
        (-0.5,1.6) circle(0.08)
        (-0.5,2.4) circle(0.08)
        (-1.5,0.8) circle(0.08)
        (-1.5,1.6) circle(0.08)
        (-1.5,2.4) circle(0.08)
        (0.5,1.6) circle(0.08)
        (1.5,1.6) circle(0.08)
        (0.25,2.4) circle(0.08)
        (0.75,2.4) circle(0.08)
        (1.25,2.4) circle(0.08)
        (1.75,2.4) circle(0.08)
        (0.75,0.2) circle(0.08)
        (1.5,0.4) circle(0.08);
        \draw(0,0)--(1.5,0.4)
        (0,0)--(-1.5,0.8)
        (0,0)--(-0.5,0.8)
        (0,0)--(0.5,1.6)
        (0,0)--(1.5,1.6)
        (0.5,1.6)--(0.25,2.4)
        (0.5,1.6)--(0.75,2.4)
        (1.5,1.6)--(1.25,2.4)
        (1.5,1.6)--(1.75,2.4)
        (-1.5,0.8)--(-1.5,2.4)
        (-0.5,2.4)--(-0.5,0.8);
        \node at(-1,2.4){$\cdots$};
        \node at(1,1.6){$\cdots$};
        \draw[dashed](0,-0.8) circle(0.8);
        \draw[fill=black](0.565685425,-0.234314575) circle(0.08)
        (-0.565685425,-0.234314575) circle(0.08);
        \draw(0.565685425,-0.234314575) arc(45:135:0.8);
        \node at (0,-2.1){$\mathcal{G}_6$};
    \end{tikzpicture}
    }\hspace{0.75cm}
     \subfigure{
    \begin{tikzpicture}[scale=0.7]
         \draw[fill=black](0,0) circle(0.08)
        (-0.5,0.8) circle(0.08)
        (-0.5,1.6) circle(0.08)
        (-0.5,2.4) circle(0.08)
        (-1.5,0.8) circle(0.08)
        (-1.5,1.6) circle(0.08)
        (-1.5,2.4) circle(0.08)
        (0.5,1.6) circle(0.08)
        (1.5,1.6) circle(0.08)
        (0.25,2.4) circle(0.08)
        (0.75,2.4) circle(0.08)
        (1.25,2.4) circle(0.08)
        (1.75,2.4) circle(0.08)
        (1.1,0.2) circle(0.08)
        (-1.1,0.2) circle(0.08);
        \draw(0,0)--(1.1,0.2)
        (0,0)--(-1.1,0.2)
        (0,0)--(-1.5,0.8)
        (0,0)--(-0.5,0.8)
        (0,0)--(0.5,1.6)
        (0,0)--(1.5,1.6)
        (0.5,1.6)--(0.25,2.4)
        (0.5,1.6)--(0.75,2.4)
        (1.5,1.6)--(1.25,2.4)
        (1.5,1.6)--(1.75,2.4)
        (-1.5,0.8)--(-1.5,2.4)
        (-0.5,2.4)--(-0.5,0.8);
        \node at(-1,2.4){$\cdots$};
        \node at(1,1.6){$\cdots$};
        \draw[dashed](0,-0.8) circle(0.8);
        \draw[fill=black](0.565685425,-0.234314575) circle(0.08)
        (-0.565685425,-0.234314575) circle(0.08);
        \draw(0.565685425,-0.234314575) arc(45:135:0.8);
        \node at (0,-2.1){$\mathcal{G}_7$};
    \end{tikzpicture}
    }\hspace{0.75cm}
    \subfigure{
    \begin{tikzpicture}[scale=0.7]
         \draw[fill=black](0,0.8) circle(0.08);
        \draw[fill=black](0.8,2.4) circle(0.08)
        (-0.8,2.4) circle(0.08)
        (0.4,1.6) circle(0.08)
        (-0.4,1.6) circle(0.08)
        (1.2,3.2) circle(0.08)
        (-1.2,3.2) circle(0.08)
        (0.565685425,0.565685425) circle(0.08)
        (-0.565685425,0.565685425) circle(0.08);
        \draw(0,0.8)--(1.2,3.2)
        (0,0.8)--(-1.2,3.2);
        \node at(0,3.2){$\cdots$};
        \draw[dashed](0,0) circle(0.8);
        \draw(0.565685425,0.565685425) arc(45:135:0.8);
        \node at (0,-1.3){$\mathcal{G}_8$};
    \end{tikzpicture}
    }\hspace{0.75cm}
    \subfigure{
    \begin{tikzpicture}[scale=0.7]
         \draw[fill=black](0,0) circle(0.08)
        (-0.5,0.8) circle(0.08)
        (-0.5,1.6) circle(0.08)
        (-0.5,2.4) circle(0.08)
        (-1.5,0.8) circle(0.08)
        (-1.5,1.6) circle(0.08)
        (-1.5,2.4) circle(0.08)
        (0.5,1.6) circle(0.08)
        (1.5,1.6) circle(0.08)
        (0.25,2.4) circle(0.08)
        (0.75,2.4) circle(0.08)
        (1.25,2.4) circle(0.08)
        (1.75,2.4) circle(0.08)
        (1.1,0.2) circle(0.08);
        \draw(0,0)--(1.1,0.2)
        (0,0)--(-1.5,0.8)
        (0,0)--(-0.5,0.8)
        (0,0)--(0.5,1.6)
        (0,0)--(1.5,1.6)
        (0.5,1.6)--(0.25,2.4)
        (0.5,1.6)--(0.75,2.4)
        (1.5,1.6)--(1.25,2.4)
        (1.5,1.6)--(1.75,2.4)
        (-1.5,0.8)--(-1.5,2.4)
        (-0.5,2.4)--(-0.5,0.8);
        \node at(-1,2.4){$\cdots$};
        \node at(1,1.6){$\cdots$};
        \draw[dashed](0,-0.8) circle(0.8);
        \draw[fill=black](0.565685425,-0.234314575) circle(0.08)
        (-0.565685425,-0.234314575) circle(0.08);
        \draw(0.565685425,-0.234314575) arc(45:135:0.8);
        \node at (0,-2.1){$\mathcal{G}_9$};
    \end{tikzpicture}
    }
    \caption{The families $\mathcal{T}_1(w),\ldots,\mathcal{T}_5(w)$ and  $\mathcal{G}_1,\ldots,\mathcal{G}_{9}$}
    \label{fig:3}
\end{figure}
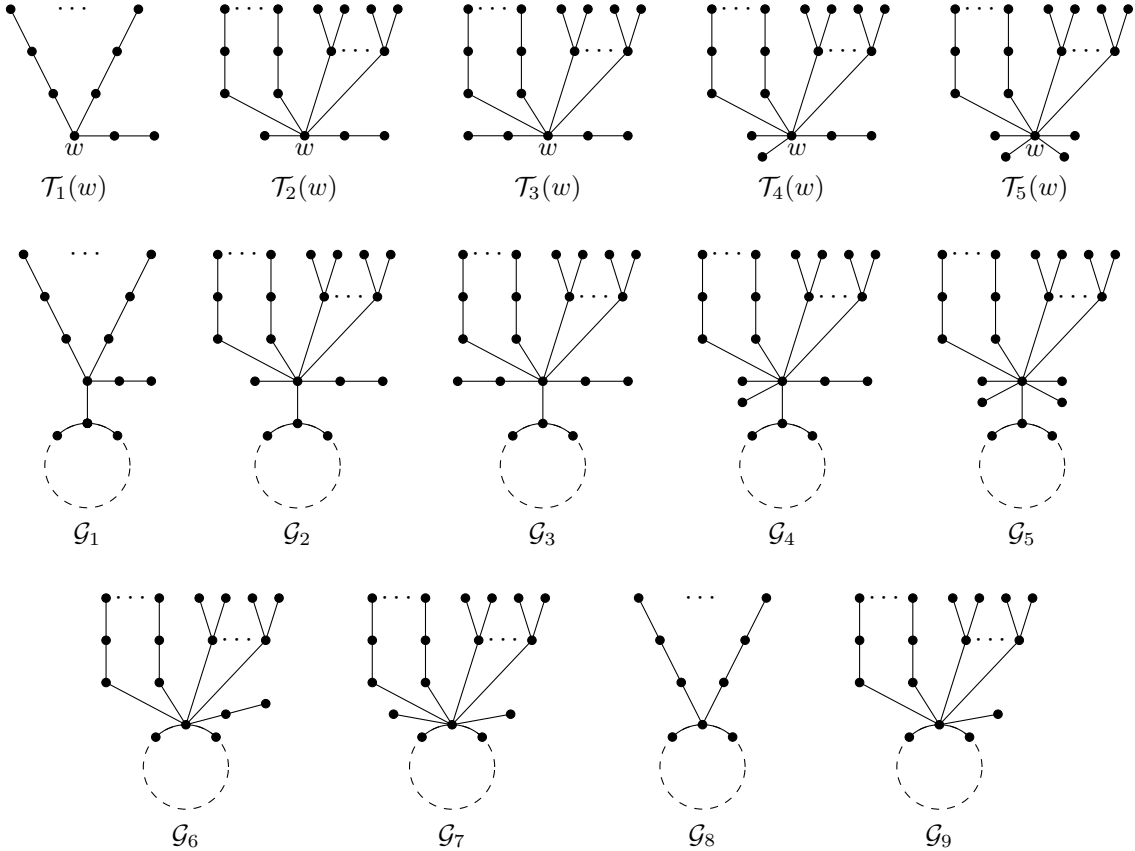

 Now we are ready to present our main results.
\begin{theorem}\label{th1.1}
    Let $n$, $k$ be positive integers such that $n\ge k+4$ and $k\equiv0~(\bmod~3)$. If $G\in\mathcal{P}(n,k)$, then
\begin{equation*}
m(G)\leq
\begin{cases}
\text{$3^{(n-k)/3}+\frac{1}{3}(n-k) +1$,} &\quad\text{if $n\equiv0$ $(\bmod\text{ }3);$}\\
\text{$3^{(n-k-1)/3}+1$,} &\quad\text{if $n\equiv1$ $(\bmod\text{ }3);$}\\
\text{$3^{(n-k-2)/3}$,} &\quad\text{if $n\equiv2$ $(\bmod\text{ }3);$}\\
\end{cases}
\end{equation*}
    with equality if and only if
\begin{equation*}
G\in
\begin{cases}
\text{$\mathcal{G}_1$,} &\quad\text{if $n\equiv0$ $(\bmod\text{ }3);$}\\
\text{$\mathcal{G}_2$,} &\quad\text{if $n\equiv1$ $(\bmod\text{ }3);$}\\
\text{$\bigcup_{i=3}^{7}\mathcal{G}_i$,} &\quad\text{if $n\equiv2$ $(\bmod\text{ }3)$.}\\
\end{cases}
\end{equation*}
\end{theorem}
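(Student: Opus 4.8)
The plan is to pivot on the unique vertex $u$ of the cycle $C_k$ with degree exceeding $2$, and to let $T$ be the rooted tree hanging at $u$ that includes $u$ itself, so that $|V(T)|=n-k+1$ regardless of whether $G$ is built by an edge ($+_e$) or by identification ($\odot$). Deleting $u$ splits $G$ into the path $P_{k-1}$ (the remainder of the cycle) and the forest $T-u$. I would first record the two facts that make $k\equiv 0\pmod 3$ special: $\diss(C_k)=2k/3$, and $C_k$ has exactly three maximum dissociation sets (the three ``delete every third vertex'' patterns). Then $m(G)$ is computed by summing over the status of $u$ in a maximum dissociation set $D$: either $u\notin D$, or $u\in D$ with its unique admissible $D$-neighbour on the cycle, or $u\in D$ with that neighbour inside $T$. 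Each case factors as a path-side contribution from $P_{k-1}$ times a tree-side contribution from $T$, subject to the two sides splitting $\diss(G)$ in the way forced by maximality; computing $\diss(G)$ as a function of $n$ and $k$ is the first concrete task.

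For the tree side I would use the standard three-state bookkeeping on a rooted tree: at each vertex track the pair (maximum size, number of realizing sets) of a maximum dissociation set of the subtree subject to the root being (i) excluded, (ii) included but unsaturated, (iii) included and saturated, and combine these over children. The crucial computation is that a pendant branch equal to $P_4$ or $K_{1,3}$ hung at the branch vertex contributes a factor of $3$ to the count in the dominant state (its three internal maximum configurations) while adding exactly $3$ to the order and the correct amount to $\diss$. A replacement/exchange argument then shows that any branch not of this form can be swapped for such blocks without decreasing $m$, and with a strict increase unless the branch already has the prescribed shape; this is what forces $T$ to be a short ``head'' near $u$ attached to a collection of $P_4$/$K_{1,3}$ blocks, and it is what generates the leading term $3^{(n-k)/3}$. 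Here I would lean on the extremal-tree results for maximum dissociation sets quoted earlier in the paper to organize the exchanges and to handle the base cases of the induction on $n$.

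The heart of the argument is the coupling between the three optimal states of the cycle and the head of $T$ near $u$, since this is where the lower-order terms and the residue split originate. In the $n\equiv 0$ case I expect the additive $\tfrac13(n-k)+1$ to count sub-dominant maximum dissociation sets in which the deletion pattern on the cycle is shifted and compensated inside the head (the single $P_3$-leg of $\mathcal{T}_1$ being the object that can ``slide''), with one such alternative per block plus one global alternative; pinning this count down exactly, and separately deciding whether $u$ meets the head by an edge (giving $\mathcal{T}_1,\dots,\mathcal{T}_5$, hence $\mathcal{G}_1,\dots,\mathcal{G}_5$) or by identification (giving $\mathcal{G}_6,\dots,\mathcal{G}_9$), is the main obstacle. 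I would conclude by evaluating $m$ on every candidate family, verifying that exactly $\mathcal{G}_1$, $\mathcal{G}_2$ and $\bigcup_{i=3}^{7}\mathcal{G}_i$ meet the three respective bounds, and tracing the equality conditions of each recursion inequality back through the exchange arguments to exclude all other graphs. The delicate points I anticipate are the exact values of the additive constants and the verification that the several families listed for $n\equiv 2$ genuinely tie rather than one strictly dominating.
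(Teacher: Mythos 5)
Your plan is essentially the paper's own argument: it pivots on the branch vertex $u$, splits maximum dissociation sets by the status of $u$ into cycle-side and tree-side factors (the paper's Lemma \ref{CLAIM3.3} together with $m(C_k,u^-)=1$, $m(C_k,u^1)=2$, $m(C_k,u^0)=0$ for $k\equiv 0\pmod 3$), and uses an exchange argument plus the known extremal-tree results to force each component of $G-C_k$ into $\bigcup_j\mathcal{T}_j$ before optimizing over the block structure and verifying the candidate families. The outline is sound; the remaining work you flag (the additive constants, the partition optimization, and the ties among $\mathcal{G}_3,\dots,\mathcal{G}_7$ when $n\equiv 2\pmod 3$) is exactly what the paper's Theorem \ref{cl3.4} and Lemmas \ref{lem2.8}--\ref{le2.15} supply.
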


For convenience, let
\begin{equation*}
	\begin{cases}
		\text{$x_1=k\cdot3^{(n-k-1)/3-1}+\frac{1}{3}(k+1)$},\\
		\text{$x_2=(k+1)\cdot3^{(n-k-1)/3-1}+1$},\\
		\text{$x_3=k\cdot3^{(n-k)/3-1}+\frac{1}{3}(k+1)\left[\frac{1}{3}(n-k)+1\right]+1$},\\
		\text{$x_4=(k+1)\cdot3^{(n-k)/3-1}+\frac{1}{3}(n+k-1)$},\\
		\text{$y_1=\frac{1}{18}(k+2)(k+5)\cdot3^{(n-k)/3}+\frac{1}{9}(n+5)(k-1)$},\\
		\text{$y_2=\frac{1}{18}k(k+5)\cdot3^{(n-k)/3}+\frac{1}{3}(k-1)+\frac{1}{18}(k+2)(k+5)\left[\frac{1}{3}(n-k)+1\right]$},\\
		\text{$y_3=\frac{1}{18}k(k+5)\cdot3^{(n-k-1)/3}+\frac{1}{18}(k+2)(k+5)$},\\
		\text{$y_4=\frac{1}{18}(k+2)(k+5)\cdot3^{(n-k-1)/3}+\frac{1}{3}(k-1)$}.\\
	\end{cases}
\end{equation*}

\begin{theorem}\label{th1.2}
	Let $n$, $k$ be positive integers such that $n\ge k+3$ and $k\equiv2~(\bmod~3)$. If $G\in\mathcal{P}(n,k)$, then
	\begin{equation}\label{eqhhhh1}
		m(G)\leq
		\begin{cases}
			\text{$\max\{x_1,x_2\}$,}\quad &\text{if $n\equiv0$ $(\bmod\text{ }3);$}\\
			\text{$(k+1)\cdot3^{(n-k-2)/3-1}$,}\quad &\text{if $n\equiv1$ $(\bmod\text{ }3);$}\\
			\text{$\max\{x_3,x_4\}$,}\quad &\text{if $n\equiv2$ $(\bmod\text{ }3)$.}\\
		\end{cases}
	\end{equation}
	Moreover,  equality in (\ref{eqhhhh1}) holds if and only if one of the following statements holds:
	\item[(i)] $n\equiv0~(\bmod~3),~G\in\mathcal{H}_i$ and $x_i=\max\{x_1,x_2\}$ with $i\in \{1,2\}$, where $\mathcal{H}_1=\mathcal{G}_2\text{ and }\mathcal{H}_2=\mathcal{G}_9;$
	\item[(ii)]  $n\equiv1~(\bmod~3)$, $G\in\mathcal{G}_6\cup \mathcal{G}_7;$
	\item[(iii)]   $n\equiv2~(\bmod~3),~G\in\mathcal{H}_i$ and $x_i=\max\{x_3,x_4\}$ with $i\in\{3,4\}$, where $\mathcal{H}_3=\mathcal{G}_1,~\mathcal{H}_4=\mathcal{G}_8.$

\end{theorem}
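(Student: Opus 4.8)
My plan is to prove Theorem~\ref{th1.2} by the same decomposition-and-optimisation scheme that underlies Theorem~\ref{th1.1}, now specialised to a cycle whose length satisfies $k\equiv 2\pmod 3$. Every $G\in\mathcal{P}(n,k)$ has one of the two shapes $C_k+_e G_w(T_1,\ldots,T_r)$ or $C_k\odot G_w(T_1,\ldots,T_r)$, so I first fix the attachment vertex $u$ on the cycle and study how a maximum dissociation set interacts with $u$. For a rooted graph $(H,u)$ I record the three \emph{states} of $u$ relative to a dissociation set $S$ --- namely $u\notin S$, $u\in S$ with $u$ isolated in $S$, and $u\in S$ with $\deg_S(u)=1$ --- and for each state I track the largest achievable $|S|$ together with the number of sets of that size. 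Since the dissociation condition forces $\deg_S(u)\le 1$, these state-counts compose across the cut at $u$, which gives an explicit formula for $m(G)$ as a finite sum of products of the state-counts of $C_k$ at $u$ and of the tree part at $w$. This reduces the theorem to (a) computing the cycle state-counts and (b) optimising the tree state-counts.

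For step (a) I would compute $\diss(C_k)=(2k-1)/3$ together with the three rooted state-counts of $C_k$ at a vertex; these counts, being polynomial in $k$, are where the coefficients $\tfrac{1}{18}k(k+5)$, $\tfrac{1}{18}(k+2)(k+5)$ and $\tfrac13(k-1)$ appearing in $y_1,\ldots,y_4$ and $x_1,\ldots,x_4$ ultimately originate. For step (b) I would use the tree classification encoded in $\mathcal{T}_1,\ldots,\mathcal{T}_5$: each admissible branch $P_4$ or $K_{1,3}$ attaches, in effect, a pendant $P_3$ at $w$, and since $P_3$ has exactly three maximum dissociation sets, each such branch multiplies the relevant count by $3$ while consuming three vertices. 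This is the source of the factors $3^{(n-k)/3}$, $3^{(n-k-1)/3}$ and $3^{(n-k-2)/3}$. The distinction between $P_4$ and $K_{1,3}$ --- whether $w$ is joined to an end or to the centre of the attached $P_3$ --- is exactly what controls the admissible states of $w$, while the small cores $P_2,P_3$ in $\mathcal{T}_2,\ldots,\mathcal{T}_5$ absorb the residue of $n$ modulo $3$.

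Combining (a) and (b), $m(G)$ is maximised by spending as many vertices as possible on $P_3$-type branches and placing the few leftover vertices into an optimal core at $w$; the residue $n\bmod 3$ dictates which core families $\mathcal{T}_i$ are optimal and hence, after attaching to the cycle, which $\mathcal{G}_i$ arise. Writing out the resulting extremal values yields $x_1,x_2$ for $n\equiv0$, a single value for $n\equiv1$, and $x_3,x_4$ for $n\equiv2$, where the two competing quantities in the $n\equiv0$ and $n\equiv2$ cases correspond respectively to edge-attachment versus identification of the best tree. The extremal families $\mathcal{H}_1=\mathcal{G}_2$, $\mathcal{H}_2=\mathcal{G}_9$ and $\mathcal{H}_3=\mathcal{G}_1$, $\mathcal{H}_4=\mathcal{G}_8$ are then read off by tracking which graphs make every inequality tight.

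The main obstacle I anticipate is the final comparison and the uniqueness of the extremal graphs. Because $x_1,x_2$ (and $x_3,x_4$) are genuinely incomparable as $k$ and $n$ vary, the bound must be phrased as a maximum, and I must verify that \emph{every} other potted graph in $\mathcal{P}(n,k)$ --- those with a non-optimal core, with branches other than $P_4$ or $K_{1,3}$, or with the other attachment mode --- yields a count strictly below $\max\{x_1,x_2\}$ (respectively $\max\{x_3,x_4\}$). This demands a careful exchange/replacement argument showing that any deviation from a $P_3$-branch decomposition strictly decreases $m(G)$, together with separate handling of the small cases $n$ close to $k+3$, where the asymptotic factor-of-$3$ reasoning is not yet dominant. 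Pinning down the exact equality families $\mathcal{H}_i$ along the boundary between the two competing structures is the most delicate part of the bookkeeping.
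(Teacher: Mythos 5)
Your strategy coincides with the paper's: both fix the unique branch vertex $u$ of the cycle, split the maximum dissociation sets according to the three states of $u$ (this is Lemma \ref{CLAIM3.3} together with the cycle state-counts of Corollary \ref{CLAIM3.2}), reduce each component of $G-C_k$ to a member of $\bigcup_j\mathcal{T}_j$ attached at its branch vertex (Theorem \ref{cl3.4} and Corollary \ref{cl3.8}), and finally optimise the resulting product formula over the partition of $n-k$. So the architecture is the right one; the problems lie in what the plan leaves unproved.

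Two gaps are substantive rather than routine. First, your optimisation narrative (``spend as many vertices as possible on $P_3$-type branches, put the leftover into an optimal core at $w$'') tacitly assumes the count always factors non-degenerately into cycle state-counts times tree state-counts. But when $G-C_k$ has components of order $1$ or $2$, the states $u^0$ and $u^1$ can contribute nothing: if the small components have total order at least $2$, then $m(G,u^0)=m(G,u^1)=0$, so $u$ lies in no maximum dissociation set and $m(G)$ collapses to $m(C_k,u^-)\cdot m(G-C_k)$. This degenerate regime is exactly where the extremal families for $n\equiv1\pmod 3$ (namely $\mathcal{G}_6\cup\mathcal{G}_7$) and the competitor $\mathcal{G}_9$ realising $x_2$ arise; an optimisation restricted to ``cores plus $P_3$-branches'' will not produce them unless the pendant-vertex and pendant-edge cases are handled as a separate branch of the argument, as the paper does in Case 1 of its proof. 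Second, the replacement argument you defer to --- that any component outside $\bigcup_j\mathcal{T}_j(v_i)$, or one attached at the wrong vertex, strictly lowers $m(G)$ --- is not a one-line exchange: it requires all three quantities $m(T)$, $\overline{m}(T-v)$ and $\overline{m}(T-N(v))$ for every candidate attachment vertex $v$ of every near-extremal tree (Lemma \ref{CL3.3} and the tables in Appendix B), and the final comparison among partitions needs the inequalities of Lemma \ref{lem2.9} and Lemma \ref{le2.15}, including the monotonicity analysis of the auxiliary function $\varphi$ that separates $x_1$ from the mixed configurations. Until those computations are carried out, the stated bounds $\max\{x_1,x_2\}$, $(k+1)\cdot3^{(n-k-2)/3-1}$, $\max\{x_3,x_4\}$ and the exact equality families remain unverified.
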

\begin{theorem}\label{th1.3}
Let $n$, $k$ be positive integers such that $n\ge k+3$ and $k\equiv1~(\bmod~3)$. If $G\in\mathcal{P}(n,k)$, then
	\begin{equation}\label{eqhhhh2}
		m(G)\leq
		\begin{cases}
			\text{$\frac{1}{18}(k+2)(k+5)\cdot3^{(n-k-2)/3},$} \quad&\text{if $n\equiv0$ $(\bmod\text{ }3);$}\\
			\text{$\max\{y_1,y_2\},$} \quad&\text{if $n\equiv1$ $(\bmod\text{ }3);$}\\
			\text{$\max\{y_3,y_4\},$} \quad&\text{if $n\equiv2$ $(\bmod\text{ }3).$}\\
		\end{cases}
	\end{equation}
Moreover,  equality in (\ref{eqhhhh2}) holds if and only if one of the following statements holds:
	\item[(i)]   $n\equiv0~(\bmod~3)$, $G\in\mathcal{G}_6\cup\mathcal{G}_7;$
	\item[(ii)]   $n\equiv1~(\bmod~3)$, $G\in\mathcal{M}_i$ and $y_i=\max\{y_1,y_2\}$ with $i\in\{1,2\}$, where $\mathcal{M}_1=\mathcal{G}_8,~\mathcal{M}_2=\mathcal{G}_1;$
	\item[(iii)]   $n\equiv2~(\bmod~3)$, $G\in\mathcal{M}_i$ and $y_i=\max\{y_3,y_4\}$ with $i\in\{3,4\}$, where $\mathcal{M}_3=\mathcal{G}_2,~\mathcal{M}_4=\mathcal{G}_9.$
\end{theorem}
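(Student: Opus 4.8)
The plan is to reduce the computation of $m(G)$ for $G\in\mathcal P(n,k)$ to an optimisation over the tree grafted onto the cycle, after encoding the interaction of the two pieces through a small vector of root parameters; this is the same machinery that underlies Theorems~\ref{th1.1} and \ref{th1.2}, only with the cycle data recomputed for $k\equiv1\pmod3$. For a graph $H$ with a distinguished vertex $v$ and each local state $j\in\{0,1,2\}$ of $v$ in a dissociation set $S$ (namely $v\notin S$; $v\in S$ isolated in $S$; $v\in S$ with exactly one $S$-neighbour), I would record the largest size $d_j(H,v)$ of such an $S$ and the number $m_j(H,v)$ of sets attaining it. The first step is to prove combination lemmas for the two operations $C_k+_eG_w$ and $C_k\odot G_w$: under the operation-dependent compatibility conditions at the interface, $\diss(G)$ is the maximum of $d_i(C_k,u)+d_j(G_w,w)$ over compatible state-pairs $(i,j)$, and $m(G)$ is the sum of the products $m_i(C_k,u)\,m_j(G_w,w)$ over the compatible pairs realising this maximum. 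I would then compute the cycle parameters for $k\equiv1\pmod3$: since $C_k-u\cong P_{k-1}$, a direct path calculation gives $d_0=d_1=d_2=(2k-2)/3$, so all three states of the cycle root attain $\diss(C_k)$. This balancing of states is exactly the structural feature distinguishing $k\equiv1$ from $k\equiv0$, and the associated counts $m_j(C_k,u)$ are what produce the quadratic coefficient $\tfrac1{18}(k+2)(k+5)$ (equal to $\binom{t+2}{2}$ for $k=3t+1$) and the linear terms in $y_1,\dots,y_4$.

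With the cycle parameters fixed, maximising $m(G)$ becomes the problem of maximising, over all trees $T'$ rooted at $w$ of the relevant order ($n-k$ vertices for $+_e$, $n-k+1$ for $\odot$), a fixed, nonnegatively weighted sum of the counts $m_j(T',w)$ taken over the states $j$ that reach the joint maximum dissociation number. I would solve this tree optimisation by induction on $|V(T')|$, peeling off a pendant block and invoking the extremal description of trees for maximum dissociation set counts: each additional $P_4$ or $K_{1,3}$ block adds three vertices and multiplies the count by $3$, which is the source of the $3^{(n-k)/3}$-type factors, while the small core that survives at $w$ is one of the trees listed in $\mathcal T_1,\dots,\mathcal T_5$ (for $+_e$) or in the descriptions of $\mathcal G_6,\dots,\mathcal G_9$ (for $\odot$). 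The residue $n-k\pmod3$ forces which core, hence which root state of $w$, is optimal; this is why the extremal families track $n-k\pmod3$, with $n-k\equiv0$ yielding the $\mathcal G_1/\mathcal G_8$ constructions, $n-k\equiv1$ yielding $\mathcal G_2/\mathcal G_9$, and $n-k\equiv2$ yielding $\mathcal G_6\cup\mathcal G_7$.

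The three displayed cases of Theorem~\ref{th1.3} then correspond to $n-k\equiv2,0,1\pmod3$. For $n\equiv0$ (so $n-k\equiv2$) the optimum is reached through identification by the two interchangeable cores $P_3$ and $(P_2,P_2)$, which tie exactly and give the single bound $\tfrac1{18}(k+2)(k+5)\cdot3^{(n-k-2)/3}$ with extremal family $\mathcal G_6\cup\mathcal G_7$. For $n\equiv1$ and $n\equiv2$ two genuinely different optimal interfaces survive, one realised by the identification operation $\odot$ and one by the edge-addition operation $+_e$, and their counts are precisely $y_1,y_2$ (respectively $y_3,y_4$); the theorem records the larger value, the extremal family being the construction attaining it ($\mathcal G_8$ versus $\mathcal G_1$ for $n\equiv1$, $\mathcal G_2$ versus $\mathcal G_9$ for $n\equiv2$). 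Producing these two competing values is the calculation that forces the $\max$ in (\ref{eqhhhh2}); which one wins depends on comparing a term with the quadratic-in-$k$ exponential weight against a term carrying the extra $\tfrac13(k-1)$ contribution, and both orders occur as $n,k$ vary, so neither family can be discarded.

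I expect the upper bound together with the equality characterisation to be the main obstacle, the lower bound being a direct evaluation of the listed families. Because $k\equiv1$ forces all three cycle-root states to attain $\diss(C_k)$ at once, far more state-pairs are active at the interface than when $k\equiv0$, so the combination formula couples the tree's behaviour in all three root states simultaneously; the induction must therefore carry the full vector $(d_j,m_j)_{j=0,1,2}$ of the tree and control not only its dissociation number but the counts in each state, including sets that are locally suboptimal at $w$ yet become globally maximum once the cycle is attached. Keeping these ties under control, and showing that equality propagates only through $P_4$ and $K_{1,3}$ blocks together with the specified cores, is where most of the bookkeeping lies; it is also what isolates the exact extremal families $\mathcal M_1,\dots,\mathcal M_4$ and explains why, relative to the case $k\equiv0$, the set of tying constructions is trimmed down to those singled out by the cycle counts $m_j(C_k,u)$.
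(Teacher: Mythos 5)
Your framework is essentially the paper's: the decomposition of $MD(G)$ according to the state of the cycle vertex $u$ of degree greater than $2$ (the paper's $MD(G,u^-)$, $MD(G,u^0)$, $MD(G,u^1)$), the computation of the cycle counts for $k\equiv1\pmod 3$ (Corollary \ref{CLAIM3.2}(ii), which is where $\frac{1}{18}(k+2)(k+5)$ and $\frac13(k-1)$ come from), and a combination lemma expressing $m(G)$ as a weighted sum of root-state counts of the attached tree (the paper's Lemma \ref{CLAIM3.3}). Your identification of which residue of $n-k$ produces which extremal families, and of the two competing values $y_1$ vs.\ $y_2$ and $y_3$ vs.\ $y_4$ as arising from the $\odot$-type and $+_e$-type optima, is also correct.

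The genuine gap is that the step carrying all the mathematical weight --- the tree optimisation and the equality characterisation --- is asserted rather than proved, and the method you propose for it is not sufficient as stated. You reduce to maximising a fixed nonnegative combination of the root-state counts $m_j(T',w)$ over all trees $T'$ of the given order, and propose to solve this ``by induction on $|V(T')|$, peeling off a pendant block,'' observing that each $P_4$ or $K_{1,3}$ block multiplies the count by $3$. That observation only evaluates the candidate extremal trees; it does not bound the objective over \emph{all} competitors. To get an upper bound one must (a) show that for each fixed multiset of component orders of $G-C_k$ the optimum forces every component into $\bigcup_{j}\mathcal T_j$ rooted at its branch vertex --- in the paper this is Theorem \ref{cl3.4} and Corollary \ref{cl3.8}, whose proof needs the non-extremal-tree bounds of Lemma \ref{lem2.3}, Corollary \ref{cor3.5}, Lemma \ref{lem2.4} and the vertex-by-vertex values of $\overline m(T-v)$ and $\overline m(T-N(v))$ in Lemma \ref{CL3.3} (equality only at $v=w$) --- and then (b) optimise over all partitions of $n-k$ into component orders, which is where the bespoke inequalities of Lemmas \ref{lem2.9}, \ref{lem2.10} and \ref{le2.15} enter, including the monotonicity analyses of the auxiliary functions $\varphi$ and $\phi$ needed to dispose of the mixed cases (one large component plus several $P_3$'s, two components of order $4$, etc.). Your plan does not say how the induction controls trees whose root is not the branch vertex, nor how it excludes partitions with several components of order not divisible by $3$; these are precisely the configurations that produce near-ties and occupy most of the paper's Subcases 1.1--1.3 and 2.1--2.3. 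Until that optimisation and the accompanying tightness analysis are carried out, the bound \eqref{eqhhhh2} and the lists $\mathcal M_1,\dots,\mathcal M_4$ are not established.
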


We remark that when $k$ is fixed and $n$ is sufficiently large, $m(G)$ has a unique maximum value $x_2$, $x_4$, $y_1$ or $y_4$ in each corresponding case of Theorem \ref{th1.2} and \ref{th1.3}.

We will prepare some preliminaries in Section 2 and present the proofs of the above theorems in Section 3.
  \section{Preliminaries}

  Let $G$ be a graph.   Given a subset $S \subseteq V(G)$, we denoted by $G[S]$ the subgraph of $G$ induced by $S$. We also write $G[V(G)\setminus S]$ as $G-S$, which is abbreviated as $G-v$ when $S=\{v\}$ is a singleton. Let $H$ be a subgraph of $G$. We will also write $G-V(H)$ as $G-H$.
  The neighborhood of a vertex $v$ in $G$ is denoted by $N_G(v)$. The closed neighborhood of $v$  is defined to be $N_G[v]=N_G(v)\cup \left\{v\right\}$. We denote by $d_G(v)$ the degree of $v$ in $G$, i.e., $d_G(v)=|N_G(v)|$. We abbreviate $N_G(v), N_G[v]$ and $ d_G(v)$ as  $N(v), N[v]$ and $d(v)$ respectively if no confuse arises. For a  positive integer $k$, $kG$ is the disjoint union of $k$ copies of a graph $G$.

 Denote by $MD(G)$ the set of all maximum dissociation sets of a graph $G$. Then $m(G)=|MD(G)|$. Given a vertex $v\in V(G)$,   $MD(G)$ is the disjoint union of the following three sets:
\begin{eqnarray*}
MD(G,v^0)&=&\{F: F\in MD(G), v\in F \text{ and } d_{G[F]}(v)=0\},\\
 MD(G,v^1)&=&\{F: F\in MD(G), v\in F \text{ and } d_{G[F]}(v)=1\},\\
 MD(G,v^-)&=&\{F: F\in MD(G) \text{ and } v\notin F\},
 \end{eqnarray*}
 whose cardinalities will be denoted by $m(G,v^0)$, $m(G,v^1)$, $m(G,v^-)$, respectively. Notice that $S$ is a dissociation set of $G$ if and only if $V(G)\setminus S$ is a 3-path vertex cover of a graph $G$.
By Proposition 1.1 of \cite{ref4}, we have the following lemma.
\begin{lemma}\label{lem2.1}
    \cite{ref4} Let $n$ be a positive integer. Then ${\rm diss}(P_n)=\lceil {2n}/{3}\rceil, {\rm diss}(C_n)=\lfloor {2n}/{3}\rfloor$.
\end{lemma}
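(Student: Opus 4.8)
The plan is to reduce both equalities to a single combinatorial observation: a subset $S$ of the vertices of a path or a cycle is a dissociation set if and only if it contains no three (cyclically, in the case of a cycle) consecutive vertices. Indeed, writing the path as $v_1v_2\cdots v_n$, a vertex $v_i\in S$ has degree two in the induced subgraph $G[S]$ exactly when both of its neighbours lie in $S$, so the degree bound in the definition of a dissociation set is equivalent to forbidding three consecutive chosen vertices; the same statement holds for $C_n$ with indices read modulo $n$. With this reformulation the problem becomes one of packing chosen vertices into a linear (respectively circular) arrangement so that each run of chosen vertices has length at most two.

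For $P_n$ I would establish the two matching bounds separately. For the upper bound, partition $\{v_1,\dots,v_n\}$ into $\lfloor n/3\rfloor$ consecutive blocks of three together with a final block of size $n\bmod 3$; each block of three meets $S$ in at most two vertices and the final block contributes at most its own size, which yields $|S|\le 2\lfloor n/3\rfloor+(n\bmod 3)=\lceil 2n/3\rceil$ after checking the three residues of $n$ modulo $3$. For the lower bound I would exhibit the periodic pattern ``choose, choose, skip'' and verify, again by residue, that it produces a dissociation set of size exactly $\lceil 2n/3\rceil$. Together these give $\diss(P_n)=\lceil 2n/3\rceil$.

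For $C_n$ the lower bound is obtained by the same periodic pattern, now wrapped around the cycle, and a short case analysis on $n\bmod 3$ shows that one can always realize a dissociation set of size $\lfloor 2n/3\rfloor$ while respecting the cyclic adjacency at the wrap-around. The subtle point, and the step I expect to be the main obstacle, is the upper bound: simply deleting one edge to turn $C_n$ into $P_n$ only gives $\diss(C_n)\le\diss(P_n)=\lceil 2n/3\rceil$, which is too weak when $3\nmid n$. To obtain the sharp bound $\lfloor 2n/3\rfloor$ one must exploit the cyclic closure. I would do this by a double-counting argument over the $n$ cyclic windows $W_i=\{v_i,v_{i+1},v_{i+2}\}$ (indices modulo $n$): each window satisfies $|S\cap W_i|\le 2$ by the no-three-consecutive property, while every vertex lies in exactly three of these windows, so $3|S|=\sum_{i=1}^{n}|S\cap W_i|\le 2n$, giving $|S|\le 2n/3$ and hence $|S|\le\lfloor 2n/3\rfloor$ since $|S|$ is an integer. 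Combining the two bounds yields $\diss(C_n)=\lfloor 2n/3\rfloor$, and a direct check disposes of the finitely many small values of $n$, completing the proof.
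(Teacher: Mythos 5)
Your argument is correct and complete. Note, however, that the paper does not prove this lemma at all: it is imported verbatim as Proposition~1.1 of the cited reference on vertex $k$-path covers (via the observation that $S$ is a dissociation set if and only if $V(G)\setminus S$ is a $3$-path vertex cover), so there is no in-paper proof to compare against. Your self-contained route is sound: the reformulation of the degree condition as ``no three (cyclically) consecutive vertices of $S$'' is exactly right for graphs of maximum degree two; the block partition gives the upper bound $2\lfloor n/3\rfloor+(n\bmod 3)=\lceil 2n/3\rceil$ for $P_n$ and the periodic pattern matches it; and you correctly identify that the naive reduction $ {\rm diss}(C_n)\le {\rm diss}(P_n)$ is too weak when $3\nmid n$, fixing it with the window double count $3|S|=\sum_{i=1}^{n}|S\cap W_i|\le 2n$, which is valid for all $n\ge 3$ (the only values for which $C_n$ is a simple graph). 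The one point worth making explicit in a final write-up is the verification that each vertex lies in exactly three of the $n$ cyclic windows, which holds precisely because $n\ge 3$; with that, no residual small-case check is actually needed on the cycle side.
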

Firstly we list some known results on the number of maximum dissociation sets of  trees and forests.

\begin{lemma}\label{lem2.3}
    \cite{ref13} Let $T$ be a tree of order $n\geq3$. Then
\begin{equation*}
 m(T)\leq
\begin{cases}
\text{$3^{n/3-1}+\frac{n}{3}+1$,} &\quad\text{if $n\equiv0$ $(\bmod\text{ }3)$;}\\
\text{$3^{(n-1)/3-1}+1$,} &\quad\text{if $n\equiv1$ $(\bmod\text{ }3)$;}\\
\text{$3^{(n-2)/3-1}$,} &\quad\text{if $n\equiv2$ $(\bmod\text{ }3)$};\\
\end{cases}
\end{equation*}
with equality if and only if
\begin{equation*}
 T\in
\begin{cases}
\text{$\mathcal{T}_1$,} &\quad\text{if $n\equiv0$ $(\bmod\text{ }3)$;}\\
\text{$\mathcal{T}_2$,} &\quad\text{if $n\equiv1$ $(\bmod\text{ }3)$;}\\
\text{$\mathcal{T}_3\cup\mathcal{T}_4\cup\mathcal{T}_5$,} &\quad\text{if $n\equiv2$ $(\bmod\text{ }3)$.}\\
\end{cases}
\end{equation*}
\end{lemma}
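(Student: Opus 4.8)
The plan is to argue by strong induction on $n$, using as the main engine the vertex decomposition $m(G)=m(G,v^{0})+m(G,v^{1})+m(G,v^{-})$ together with the fact that $\diss$ is additive and $m$ is multiplicative over the components of a forest (so that deleting a cut vertex turns a count into a product). Concretely, I would fix a leaf $v$ of $T$ with neighbour $u$ and read off three reduction identities. A set in $MD(T,v^{0})$ has $u\notin F$ and $F\setminus\{v\}$ a dissociation set of $T-\{u,v\}$ of size $\diss(T)-1$; a set in $MD(T,v^{1})$ has $u\in F$ matched to $v$, forcing every other neighbour of $u$ out, so $F\setminus\{u,v\}$ is a dissociation set of $T-N[u]$ of size $\diss(T)-2$; and $MD(T,v^{-})$ consists of the dissociation sets of $T-v$ of size $\diss(T)$. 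Thus $m(T)$ is controlled by the numbers of maximum (and one-below-maximum) dissociation sets of $T-v$, $T-\{u,v\}$ and $T-N[u]$, each of which has fewer than $n$ vertices and, after splitting into components and applying multiplicativity, is amenable to the induction hypothesis.

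The base cases $n=3,4,5$ (and a handful of further small trees needed to anchor the equality analysis) are checked by hand: one records $m(P_3)=3$, $m(P_4)=2$, $m(K_{1,3})=1$, and identifies the smallest members of $\mathcal{T}_1,\dots,\mathcal{T}_5$. For the inductive step I would fix a longest path $u_0u_1u_2\cdots$ with $u_0$ a leaf, so that every neighbour of $u_1$ other than $u_2$ is a leaf, and split into cases according to $d(u_1)$ and the structure hanging at $u_2$ (and, when needed, $u_3$). This pins down the local configuration: either $u_1$ carries several pendant leaves (a pendant star $K_{1,t}$, which at equality must be $K_{1,3}$), or $d(u_1)=2$ and one examines whether $u_2$ supports a pendant $P_2$, a pendant $P_3$, or the tail of a pendant $P_4$. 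In each case I delete the appropriate pendant piece, apply the induction hypothesis to the (possibly disconnected) remainder, and compare against the claimed bound, using Lemma~\ref{lem2.1} to pin down the dissociation numbers of the deleted pieces.

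The delicate point, and the step I expect to be the main obstacle, is that the target function is \emph{not} purely multiplicative. Writing $f(n)=3^{n/3-1}+\tfrac n3+1$ for the $n\equiv0$ bound, one checks $3f(n-3)>f(n)$, so the naive estimate $m(T)\le 3\,m(T')$ obtained by peeling off three vertices is too weak. In fact the additive term $\tfrac n3+1$ counts exactly the maximum dissociation sets that \emph{use} the central high-degree vertex $w$ of the extremal tree (one with $w$ isolated, one with $w$ matched into the $P_3$-branch, and one for each $P_4$ or $K_{1,3}$ branch into which $w$ may be matched), while $3^{n/3-1}$ counts those omitting $w$. To separate these I would strengthen the induction to track, at a chosen root $r$, the state-refined data $\big((g_0,c_0),(g_1,c_1),(g_-,c_-)\big)$ recording the maximum attainable size and its multiplicity when $r$ lies in the set with degree $0$, in the set with degree $1$, or out of the set. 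Over the children $B_1,\dots,B_d$ of $r$ the sizes obey $g_-=\sum_i\max_s g_s^{(i)}$, $g_0=1+\sum_i g_-^{(i)}$ and $g_1=\max_j\,(1+g_0^{(j)}+\sum_{i\neq j}g_-^{(i)})$, with the corresponding counts multiplying over independent children and summing over states that realise a maximum. Carrying these refined quantities through the case analysis is precisely what isolates the $3^{n/3-1}$ main term from the linear correction and keeps the induction self-sustaining.

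Finally, for the equality characterization I would retain in every case the exact condition under which each inequality is tight. Tightness forces each peeled branch to realise the maximal factor $3$ per three vertices, which happens only for $P_4$ and $K_{1,3}$ branches, and forces the leftover branches attached at the branch vertex to form exactly one of the patterns $P_3$; $P_2$ and $P_3$; two $P_3$; two $P_2$ and a $P_3$; or four $P_2$, selected by $n\bmod 3$. Assembling these yields precisely $\mathcal{T}_1$ when $n\equiv0$, $\mathcal{T}_2$ when $n\equiv1$, and $\mathcal{T}_3\cup\mathcal{T}_4\cup\mathcal{T}_5$ when $n\equiv2$. Threading these equality conditions through every branch of the case analysis is lengthy but routine once the state-refined induction above is in place.
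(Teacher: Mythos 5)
You should first note that the paper does not prove this lemma at all: it is quoted from Tu, Zhang and Shi \cite{ref13}, so there is no in-paper argument to compare yours against. Judged on its own, your outline is the natural one and is consistent with the machinery the present paper builds around the cited result (the $v^0/v^1/v^-$ decomposition, the quantity $\overline{m}$, and Corollary~\ref{cor3.5}, which the authors obtain by ``applying the same proof as Theorem 4.5 in \cite{ref13}''). Your three reduction identities at a leaf are correct, your observation that the $n\equiv 0$ target $3^{n/3-1}+\tfrac n3+1$ is not submultiplicative under peeling off three vertices is correct and is indeed the central obstacle, and the state-refined root data $(g_s,c_s)$ with the recurrences you wrote down is the right repair; it is essentially the $\overline{m}(T-v)$, $\overline{m}(T-N(v))$ bookkeeping that this paper tabulates in Appendix~B.

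The gap is that your proposal stops exactly where the content of the theorem lives. Everything after the setup --- the case analysis at the penultimate vertex of a longest path, the verification that the recurrences force the three displayed bounds (in particular that the additive correction is exactly $\tfrac n3+1$ when $n\equiv 0$, collapses to $+1$ when $n\equiv 1$, and vanishes when $n\equiv 2$), and the reverse direction of the equality characterization showing that tightness forces membership in $\mathcal{T}_1$, $\mathcal{T}_2$ or $\mathcal{T}_3\cup\mathcal{T}_4\cup\mathcal{T}_5$ --- is declared ``lengthy but routine'' and omitted. This material is the body of an entire published paper, and several steps are not routine: when you peel a pendant star you must rule out $K_{1,t}$ with $t\ge 4$ at equality; and when you peel a pendant $P_4$ or $K_{1,3}$ you must control not just $m$ of the remainder but the number of maximum dissociation sets avoiding the attachment vertex, or avoiding its whole neighbourhood, which is exactly where the one-below-maximum counts you allude to enter and where the induction hypothesis as you stated it (a bound on $m(T)$ alone) is too weak and must be strengthened before it can close. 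As a plan the proposal is sound; as a proof it is incomplete.
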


\begin{lemma}\label{lemma2.4}
    \cite{ref11} Let $G$ be a forest of order $n\geq6$ with at least two components such that each component   has order at least 3. Then
    \begin{equation*}
 m(G)\leq
\begin{cases}
\text{$3^{n/3}$,} &\quad\text{if $n\equiv0$ $(\bmod\text{ }3)$;}\\
\text{$2\times3^{(n-1)/3-1}$,} &\quad\text{if $n\equiv1$ $(\bmod\text{ }3)$;}\\
\text{$4\times3^{(n-2)/3-2}$,} &\quad\text{if $n\equiv2$ $(\bmod\text{ }3)$}\\
\end{cases}
\end{equation*}
with equality if and only if
\begin{equation*}
 G\cong
\begin{cases}
\text{$\frac{n}{3}P_3$,} &\quad\text{if $n\equiv0$ $(\bmod\text{ }3)$;}\\
\text{$P_4\cup(\frac{n-1}{3}-1)P_3$,} &\quad\text{if $n\equiv1$ $(\bmod\text{ }3)$;}\\
\text{$2P_4\cup(\frac{n-2}{3}-2)P_3$,} &\quad\text{if $n\equiv2$ $(\bmod\text{ }3)$.}\\
\end{cases}
\end{equation*}
\end{lemma}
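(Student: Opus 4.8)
The plan is to reduce everything to an optimization over the component orders. Write $G = T_1 \cup \cdots \cup T_c$ with $c \ge 2$ and $t_i := |V(T_i)| \ge 3$. The first step is the product formula: since there are no edges between distinct components, a set $F$ is a dissociation set of $G$ if and only if each restriction $F \cap V(T_i)$ is a dissociation set of $T_i$; hence $\mathrm{diss}(G) = \sum_i \mathrm{diss}(T_i)$, and $F$ is maximum exactly when every restriction is maximum in its component. This gives
\[ m(G) = \prod_{i=1}^{c} m(T_i), \]
so it suffices to bound the right-hand side and then track equality componentwise.

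Next I would feed Lemma~\ref{lem2.3} into this product. Write $g(t)$ for the common upper bound on $m(T)$ over trees $T$ of order $t$ supplied by Lemma~\ref{lem2.3}, so $m(T_i) \le g(t_i)$ with $g(3)=3$, $g(4)=2$, $g(5)=1$; a routine check of the three residue formulas gives $g(t) \le 3^{t/3}$ for all $t \ge 3$, with equality only at $t=3$. In particular $m(G) \le \prod_i 3^{t_i/3} = 3^{n/3}$, which already covers $n \equiv 0$. For the general bound I would run a splitting (exchange) argument on the partition $n = t_1 + \cdots + t_c$. The key elementary inequality is $g(t) \le g(3)\,g(t-3) = 3\,g(t-3)$ for every $t \ge 6$, strict unless $t \equiv 2 \pmod 3$; this says that splitting a component of order $\ge 6$ off a $P_3$ never decreases $\prod_i g(t_i)$. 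Hence the maximum of $\prod_i g(t_i)$ is attained on a partition all of whose parts lie in $\{3,4,5\}$, where the quantity to maximize becomes $3^{a} 2^{b}$ subject to $3a + 4b + 5c = n$. Since $g(5)=1$ makes size-$5$ parts pure waste, and since replacing four $P_3$'s by three $P_4$'s on $12$ vertices turns $3^4$ into $2^3$, the optimum uses no part of order $5$ and the fewest parts of order $4$ compatible with $n \bmod 3$: none for $n\equiv 0$, one for $n\equiv 1$, two for $n\equiv 2$. This reproduces the three claimed values via $(n-4)/3 = (n-1)/3 - 1$ and $(n-8)/3 = (n-2)/3 - 2$, and $c \ge 2$ holds automatically for $n \ge 6$.

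For the equality/uniqueness part, suppose $m(G)$ meets the bound. Then $\prod_i m(T_i) = \prod_i g(t_i)$ forces $m(T_i) = g(t_i)$ for every $i$, and the underlying partition must be extremal. I would first show no part can have order $\ge 6$: for parts $\equiv 0,1 \pmod 3$ the splitting inequality is strict, while a part $\equiv 2 \pmod 3$ reduces through neutral splits to a partition containing an order-$5$ component, which I rule out by a direct comparison with the canonical partition (an order-$5$ component contributes a factor $1$ and is strictly beaten by converting the wasted vertices into $P_3$'s and $P_4$'s). Thus every part lies in $\{3,4,5\}$, no $5$ occurs, and the counts of $3$'s and $4$'s are forced, so the partition is the unique canonical one. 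The componentwise equalities $m(T_i) = g(t_i)$ then pin down each tree through the equality cases of Lemma~\ref{lem2.3}: the only order-$3$ tree is $P_3$, and the only order-$4$ tree with $m=2$ is $P_4$ (whereas $K_{1,3}$ yields $m=1$). This identifies $G$ as $\frac{n}{3}P_3$, $P_4 \cup (\frac{n-1}{3}-1)P_3$, or $2P_4 \cup (\frac{n-2}{3}-2)P_3$ in the three cases.

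The main obstacle I expect is the equality analysis rather than the inequality. The splitting move is only \emph{neutral}, not strict, for parts of order $\equiv 2 \pmod 3$, so excluding large components from the extremal configuration is not immediate and needs the supplementary argument that an order-$5$ component is wasteful; this is precisely where the residue $n \equiv 2$ case demands the most care. Everything else reduces to the clean product identity together with finite verifications drawn from Lemma~\ref{lem2.3}.
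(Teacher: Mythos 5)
The paper does not actually prove this statement: it is imported verbatim from \cite{ref11}, so there is no internal proof to compare yours against. Judged on its own, your argument is correct and is the natural route. The product identity $m(G)=\prod_i m(T_i)$ holds because the dissociation number is additive over components and a maximum dissociation set restricts to a maximum dissociation set in each component; feeding in the tree bound $g(t)$ from Lemma \ref{lem2.3} (with $g(3)=3$, $g(4)=2$, $g(5)=1$) reduces the problem to maximizing $\prod_i g(t_i)$ over partitions of $n$ into parts of size at least $3$, and your splitting inequality $g(t)\le 3g(t-3)$ for $t\ge 6$ together with the finite optimization over parts in $\{3,4,5\}$ yields exactly the three stated values, with $c\ge 2$ automatic for $n\ge 6$.

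Your diagnosis of the one delicate point is also accurate: the split is only neutral for parts $\equiv 2\pmod 3$, so the equality analysis must exclude such parts by a separate comparison. The cleanest way to finish that step, consistent with your sketch, is to note that a part of size $t\equiv2\pmod 3$ contributes $g(t)=3^{(t-2)/3-1}=3^{(t-5)/3}$, so any partition containing one has product at most $3^{(t-5)/3}\cdot 3^{(n-t)/3}=3^{(n-5)/3}$, which is strictly below $3^{n/3}$, $2\cdot3^{(n-4)/3}$ and $4\cdot3^{(n-8)/3}$ in the respective residue classes; this kills order-$5$ components and all larger parts $\equiv 2\pmod 3$ at once. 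The extremal partition is then forced to consist of $P_3$'s plus the minimal admissible number of order-$4$ parts, and the equality cases of Lemma \ref{lem2.3} ($P_3$ in order $3$; $P_4$ rather than $K_{1,3}$ in order $4$, since $m(K_{1,3})=1$) pin down $G$ as claimed. I see no gaps.
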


Now we present some lemmas on the  number of maximum dissociation sets for some special graphs.
Denote by $$\mathcal{H}=\{G_w(P_3,T_1,\ldots,T_l), G_w(P_2,P_2,T_1,\ldots,T_l): T_i= P_4\text{ or }K_{1,3}\text{ for }i=1,\ldots,l\}.$$

$\mathcal{H}$ will be also written as $\mathcal{H}(w)$ if we need to emphasize the branch vertex $w$.
Each graph from $\mathcal{H}$ has one of the diagrams in Figure \ref{fig:5}.

\begin{figure}[H]
	\centering
	\subfigure[$G_w(P_3,T_1,\ldots,T_l)$]{
		\begin{tikzpicture}[scale=0.8]
			\draw[fill=black](0,0) circle(0.08)
		(-0.5,0.8) circle(0.08)
		(-0.5,1.6) circle(0.08)
		(-0.5,2.4) circle(0.08)
		(-1.8,0.8) circle(0.08)
		(-1.8,1.6) circle(0.08)
		(-1.8,2.4) circle(0.08)
		(0.5,1.6) circle(0.08)
		(1.8,1.6) circle(0.08)
		(0.25,2.4) circle(0.08)
		(0.75,2.4) circle(0.08)
		(1.55,2.4) circle(0.08)
		(2.05,2.4) circle(0.08)
		(0.7,0) circle(0.08)
		(1.4,0) circle(0.08);
		\draw(0,0)--(1.4,0)
		(0,0)--(-1.8,0.8)
		(0,0)--(-0.5,0.8)
		(0,0)--(0.5,1.6)
		(0,0)--(1.8,1.6)
		(0.5,1.6)--(0.25,2.4)
		(0.5,1.6)--(0.75,2.4)
		(1.8,1.6)--(1.55,2.4)
		(1.8,1.6)--(2.05,2.4)
		(-1.8,0.8)--(-1.8,2.4)
		(-0.5,2.4)--(-0.5,0.8);
		\node at(-1.15,2.4){$\cdots$};
		\node at(1.15,1.6){$\cdots$};
		\node at(0,-0.3){$w$};
		\end{tikzpicture}
	}\hspace{3cm}
	\subfigure[$G_w(P_2,P_2,T_1,\ldots,T_l)$]{
		\begin{tikzpicture}[scale=0.8]
			\draw[fill=black](0,0) circle(0.08)
			(-0.5,0.8) circle(0.08)
			(-0.5,1.6) circle(0.08)
			(-0.5,2.4) circle(0.08)
			(-1.8,0.8) circle(0.08)
			(-1.8,1.6) circle(0.08)
			(-1.8,2.4) circle(0.08)
			(0.5,1.6) circle(0.08)
			(1.8,1.6) circle(0.08)
			(0.25,2.4) circle(0.08)
			(0.75,2.4) circle(0.08)
			(1.55,2.4) circle(0.08)
			(2.05,2.4) circle(0.08)
			(-0.7,-0) circle(0.08)
			(0.7,0) circle(0.08);
			\draw(0,0)--(0.7,0)
			(0,0)--(-0.7,0)
			(0,0)--(-1.8,0.8)
			(0,0)--(-0.5,0.8)
			(0,0)--(0.5,1.6)
			(0,0)--(1.8,1.6)
			(0.5,1.6)--(0.25,2.4)
			(0.5,1.6)--(0.75,2.4)
			(1.8,1.6)--(1.55,2.4)
			(1.8,1.6)--(2.05,2.4)
			(-1.8,0.8)--(-1.8,2.4)
			(-0.5,2.4)--(-0.5,0.8);
			\node at(-1.15,2.4){$\cdots$};
			\node at(1.15,1.6){$\cdots$};
			\node at(0,-0.3){$w$};
		\end{tikzpicture}
	}
	\caption{Trees in the family $\mathcal{H}$}
	\label{fig:5}
\end{figure}
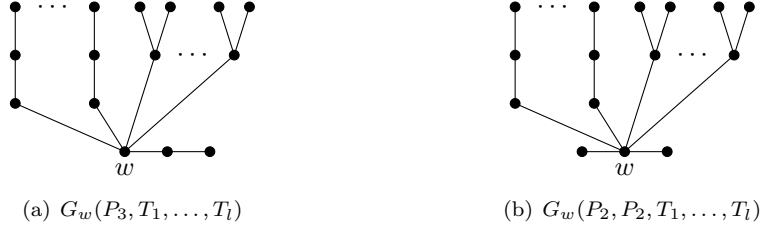

Applying the same proof as Theorem 4.5 in \cite{ref13}, we can get the following corollary.

\begin{corollary}\label{cor3.5}
	Suppose $T\notin\mathcal{H}$ is a tree  of order $n$ with $n\equiv0$ $(\bmod\text{ }3)$.   Then $$m(T)\leq3^{n/3-1}+1.$$
\end{corollary}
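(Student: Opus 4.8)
The plan is to re-run the inductive argument that establishes Lemma~\ref{lem2.3} (Theorem 4.5 of \cite{ref13}), but to keep track of the finer threshold $3^{n/3-1}+1$ instead of the global maximum $3^{n/3-1}+n/3+1$. Note first that the extremal trees of Lemma~\ref{lem2.3} for $n\equiv0$, namely $\mathcal{T}_1=\{P_3,G_w(P_3,P_4,\ldots,P_4)\}$, all lie in $\mathcal{H}$, so the corollary is a genuine strengthening that isolates the precise family where $m(T)$ stays above the threshold. I would induct on $n$ through multiples of $3$, the base cases (small $n$) being checked directly on the finitely many trees of each small order; there one verifies by hand that any tree with $m(T)>3^{n/3-1}+1$ belongs to $\mathcal{H}$.

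For the inductive step I would fix a longest path $v_1v_2\cdots v_p$ with $v_1$ a leaf and examine the local structure at $v_2$. Since the path is longest, every neighbour of $v_2$ other than $v_3$ is a leaf, so the component of $T-v_3$ containing $v_2$ is a star centred at $v_2$. This yields a short list of pendant configurations hanging at the branch vertex --- a pendant edge, a pendant $P_3$, a pendant $P_4$, or a pendant claw $K_{1,3}$ --- which are exactly the building blocks occurring in $\mathcal{H}$ and the $\mathcal{T}_i$. In each configuration I would delete the pendant subtree $S$ (together, where forced, with $v_3$ or its closed neighbourhood) and use the decomposition $m(T)=m(T,v^0)+m(T,v^1)+m(T,v^-)$ at the attaching vertex to write $m(T)$ as a weighted combination of $m(T-S)$ and of $m$ of the forest obtained by also removing the attaching neighbour. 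Bounding the surviving tree by the induction hypothesis in its refined form and the detached efficient pieces by Lemmas~\ref{lem2.3} and~\ref{lemma2.4} gives a recursive inequality for $m(T)$.

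The decisive point is the threshold analysis. The maximal weight of Lemma~\ref{lem2.3} is produced only when the recursion is saturated at every step, and saturation forces the detached pieces to be $P_4$ or $K_{1,3}$ and the head of the tree to be a $P_3$ or a pair $P_2,P_2$ glued at the branch vertex --- precisely the defining data of $\mathcal{H}$. Conversely, I would show that as soon as the local configuration departs from this list (a pendant piece other than $P_4,K_{1,3}$, or a branch vertex carrying the wrong head), the corresponding term drops and the accumulated linear surplus $n/3$ collapses, improving the bound to $3^{n/3-1}+1$. Carrying the induction hypothesis in the conditional shape ``$T\notin\mathcal{H}\Rightarrow m(T)\le 3^{n/3-1}+1$'' is what lets this collapse propagate: once a peeled-off smaller tree fails to be in $\mathcal{H}$, its contribution is already below threshold.

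The main obstacle I anticipate is exactly this bookkeeping. One must check, uniformly across every pendant configuration and every degree of the branch vertex, that failing membership in $\mathcal{H}$ costs at least the full linear term, so the additive constant drops to precisely $+1$ rather than $+2$ or more. In parallel one must confirm that the deletions used do not disturb the maximum dissociation number --- that ${\rm diss}$ behaves additively as the efficient pieces $P_4,K_{1,3},P_3$ are peeled off, so that the maximum dissociation sets of $T$ genuinely correspond to the combinations counted by the recursion. Keeping this ${\rm diss}$-accounting consistent with Lemma~\ref{lem2.1} throughout the case analysis is where the real care lies.
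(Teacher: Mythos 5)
The paper gives no independent argument here: its entire proof is the remark that one obtains the corollary by "applying the same proof as Theorem 4.5 in \cite{ref13}", i.e.\ by re-running the inductive tree argument while tracking the second-largest value and observing that the linear surplus $n/3$ is only produced by the configurations defining $\mathcal{H}$ --- which is precisely the plan you describe. Your proposal therefore takes essentially the same route as the paper and is correct in outline, with your conditional induction hypothesis ``$T\notin\mathcal{H}\Rightarrow m(T)\le 3^{n/3-1}+1$'' being the right way to make the deferred bookkeeping go through.
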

Let $T$ be a tree with $V_1\subseteq V(T)$. We denote by
$$\overline{m}(T-V_1)=\left\{\begin{array}{ll}
	m(T-V_1),&\text{if}~~ {\rm diss}(T-V_1)={\rm diss}(T);\\
	0,&\text{otherwise}.
\end{array}
\right.$$
In particular, if $T-V_1$ is an empty graph, we have $\overline{m}(T-V_1)=0$. We abbreviate $\overline{m}(T-V_1)$ as $\overline{m}(T-v)$ if $V_1=\{v\}$ is a singleton.
\begin{lemma}\label{CL3.3}
	Let $T$ be a tree of order $n\ge4$ and $v\in V(T)$.
	\item[(i)] If $T\in\mathcal{H}(w)$, then $\overline{m}(T-v)\le 3^{n/3-1}$, with equality if and only if $v=w$.
	\item[(ii)]If $T\in\mathcal{T}_2(w)$, then $\overline{m}(T-v)\le 3^{(n-1)/3-1}$, with equality if and only if $v=w$.
	\item[(iii)]If $T\in\mathcal{T}_3(w)\cup\mathcal{T}_4(w)\cup\mathcal{T}_5(w)$, then $\overline{m}(T-v)\le 3^{(n-2)/3-1}$, with equality if and only if $v=w$.
\end{lemma}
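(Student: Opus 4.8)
The plan is to first reduce the statement about $\overline{m}(T-v)$ to a statement about those maximum dissociation sets of $T$ that \emph{avoid} $v$. I claim that for every vertex $v$ one has $\overline{m}(T-v)=m(T,v^-)$. Indeed, if ${\rm diss}(T-v)={\rm diss}(T)$, then a maximum dissociation set of $T$ omitting $v$ has size ${\rm diss}(T)={\rm diss}(T-v)$ and is therefore a maximum dissociation set of $T-v$, and conversely; this gives a bijection, so $m(T,v^-)=m(T-v)=\overline{m}(T-v)$. If instead ${\rm diss}(T-v)<{\rm diss}(T)$, then no maximum dissociation set of $T$ can avoid $v$ (such a set would be a dissociation set of $T-v$ of size ${\rm diss}(T)$), whence $m(T,v^-)=0=\overline{m}(T-v)$. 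Thus in all cases it suffices to bound $m(T,v^-)$ and to show that it is maximized exactly at $v=w$.

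Next I would pin down ${\rm diss}(T)$ and the value of $m(T,w^-)$ for every member of the five families. Writing $T=G_w(B_1,\dots,B_r)$ with each branch $B_i\in\{P_2,P_3,P_4,K_{1,3}\}$ attached to $w$ at a leaf, the graph $T-w$ is the disjoint union of the $B_i-w$, each of which is $P_1$, $P_2$ or $P_3$ (note $P_4-w=K_{1,3}-w=P_3$, while $P_3-w=P_2$ and $P_2-w=P_1$). Using Lemma \ref{lem2.1} one checks componentwise that ${\rm diss}(T-w)=\sum_i{\rm diss}(B_i-w)$ equals ${\rm diss}(T)=\lceil 2n/3\rceil$, so that $m(T,w^-)=m(T-w)=\prod_i m(B_i-w)$. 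Since $m(P_1)=m(P_2)=1$ and $m(P_3)=3$, and since $T-w$ has exactly one $P_3$-component for each $P_4$ or $K_{1,3}$ branch of $T$, this product equals $3^{n/3-1}$, $3^{(n-1)/3-1}$ or $3^{(n-2)/3-1}$ according to whether $T\in\mathcal{H}$, $T\in\mathcal{T}_2$, or $T\in\mathcal{T}_3\cup\mathcal{T}_4\cup\mathcal{T}_5$. This establishes the equality value at $v=w$.

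It then remains to treat $v\neq w$ and to prove the strict inequality. Here $v$ lies in a unique branch $B_j$, and $T-v$ splits as the disjoint union of the component $T_w$ still containing $w$ (again a tree of one of the controlled shapes, possibly a smaller member of the same family or a short path) together with the subtrees of $B_j$ detached by deleting $v$. I would decompose a maximum dissociation set avoiding $v$ according to the state of $w$, writing $m(T,v^-)=m(T,v^-,w^0)+m(T,v^-,w^1)+m(T,v^-,w^-)$, and bound each summand by a product over the branches in which the factor coming from $B_j$ is strictly smaller than its value at $v=w$; the point is that deleting an interior or leaf vertex of a single branch relaxes only that branch, while the degree-one constraint at the cut vertex $w$ now binds. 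Componentwise the resulting products are controlled by $m(P_4)=2$, $m(K_{1,3})=1$, $m(P_3)=3$, $m(P_2)=m(P_1)=1$, together with Lemma \ref{lem2.3}, Lemma \ref{lemma2.4} and Corollary \ref{cor3.5} applied to $T_w$.

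The main obstacle is precisely this strict inequality underlying the equality characterization: one must show $m(T,v^-)<m(T,w^-)$ for \emph{every} $v\neq w$, uniformly across the four branch types and all positions of $v$ within a branch. This is a finite but delicate case analysis, and the tightest configurations are the smallest orders in each family, where $T_w$ degenerates to a short path and the detached part is a single $P_2$ or $P_3$; these boundary cases are exactly where the product of component counts comes closest to the target power of $3$, so careful bookkeeping of which branch absorbs the deletion — and separate verification of the minimal orders — is the crux of the argument.
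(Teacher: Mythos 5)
Your opening reduction $\overline{m}(T-v)=m(T,v^-)$ is correct, and so is the branch decomposition giving the value $3^{n/3-1}$, $3^{(n-1)/3-1}$ or $3^{(n-2)/3-1}$ at $v=w$; this is in substance the same route the paper takes, since its proof of this lemma is a direct finite verification of $\overline{m}(T-v)$ for every position of $v$ up to symmetry, recorded in Tables \ref{table:1}--\ref{EQUg} and organized by the labelled diagrams of Figure \ref{fig:enter-label}. One caveat on the part you did write out: ${\rm diss}(T)=\lceil 2n/3\rceil$ is only stated in Lemma \ref{lem2.1} for paths, not for trees, so the identity ${\rm diss}(T-w)={\rm diss}(T)$ should be justified directly (a maximum dissociation set of $T$ containing $w$ together with a maximum dissociation set of every $B_i-w$ would give $w$ degree at least two, since the $P_2$- or $P_3$-branch contributes both of its non-$w$ vertices), not by appeal to the path formula.

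The genuine gap is the case $v\neq w$: you state exactly what must be proved, namely $m(T,v^-)<m(T,w^-)$ for every other vertex, correctly call it the crux, and then stop. That verification is the entire content of the inequality and of the equality characterization, and it is not a formality. For instance, take $T=G_w(P_3,P_4)\cong P_6\in\mathcal{H}(w)$ and let $v=u_3$ be the neighbour of $w$ on the $P_4$-branch: then $T-v\cong P_3\cup P_2$ with ${\rm diss}(T-v)={\rm diss}(T)=4$, so $\overline{m}(T-v)=3=3^{n/3-1}$ and the bound is attained at a second vertex (consistently, the entry $3^{n/3-2}+n/3$ in Table \ref{table:1} equals $3^{n/3-1}$ precisely when $n=6$). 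Of course $T\in\mathcal{H}(u_3)$ as well, so the correct reading of the equality condition is that $v$ is a branch vertex of $T$, which is unique only once $l\ge2$; exactly the boundary configurations you flagged are where the literal statement degenerates. A complete proof therefore has to either reproduce the tabulated values for each symmetry class of $v$ or give a uniform comparison argument handling these minimal orders separately; the proposal does neither, so as it stands it is a correct plan rather than a proof.
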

\begin{proof}
	A straightforward computation may verify the statements; see Tables \ref{table:1}-\ref{EQUg} in Appendix B for the details.
\end{proof}

\begin{lemma}\label{lem2.4}
    Let $G$ be a forest of order $n$. Then $m(G)\leq3^{\lfloor{n}/{3}\rfloor}$  with equality if and only if
\begin{equation*}\label{2.1}
G\cong
\begin{cases}
\text{$\frac{n}{3}P_3$,} &\quad\text{if $n\equiv0$ $(\bmod\text{ }3)$;}\\
\text{$\frac{n-1}{3}P_3\cup K_1$,} &\quad\text{if $n\equiv1$ $(\bmod\text{ }3)$;}\\
\text{$\frac{n-2}{3}P_3\cup 2K_1$ or $\frac{n-2}{3}P_3\cup P_2$,} &\quad\text{if $n\equiv2$ $(\bmod\text{ }3)$.}\\
\end{cases}
\end{equation*}
\end{lemma}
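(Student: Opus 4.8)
The plan is to reduce Lemma~\ref{lem2.4} to the single-tree estimate of Lemma~\ref{lem2.3} using the multiplicativity of $m$ over connected components. First I would record the elementary fact that if $G_1,\dots,G_c$ are the components of the forest $G$, then $\diss(G)=\sum_{i=1}^{c}\diss(G_i)$, and a subset of $V(G)$ is a maximum dissociation set of $G$ exactly when its restriction to each $G_i$ is a maximum dissociation set of $G_i$; hence $MD(G)$ is in bijection with $MD(G_1)\times\cdots\times MD(G_c)$ and
$$m(G)=\prod_{i=1}^{c}m(G_i).$$
This turns the problem into understanding $m(T)$ for a single tree $T$ and then reassembling.

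Next I would prove the per-component bound $m(T)\le 3^{\lfloor |V(T)|/3\rfloor}$ together with its equality cases. For $|V(T)|\in\{1,2\}$ this is immediate, since $m(K_1)=m(P_2)=1=3^{0}$. For $|V(T)|\ge 3$ it follows from Lemma~\ref{lem2.3}: in each residue class modulo $3$ the bound stated there is at most $3^{\lfloor |V(T)|/3\rfloor}$, a comparison that reduces to the elementary inequality $3^{n/3-1}+n/3+1\le 3^{n/3}$ (and its $n\equiv 1,2$ analogues). This comparison is an equality only when $|V(T)|=3$, where $T=P_3$ and $m(P_3)=3$, and is strict once $|V(T)|\ge 4$. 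Therefore $m(T)=3^{\lfloor |V(T)|/3\rfloor}$ holds if and only if $T\in\{K_1,P_2,P_3\}$.

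Combining this with the superadditivity of the floor, $\sum_i\lfloor n_i/3\rfloor\le\big\lfloor(\sum_i n_i)/3\big\rfloor$ where $n_i=|V(G_i)|$, I obtain
$$m(G)=\prod_{i}m(G_i)\le\prod_{i}3^{\lfloor n_i/3\rfloor}=3^{\sum_i\lfloor n_i/3\rfloor}\le 3^{\lfloor n/3\rfloor},$$
which is the asserted inequality. For the equality discussion, note that since all factors are positive and bounded, equality forces both that every component lies in $\{K_1,P_2,P_3\}$ and that the floor step is tight. Writing $a,b,c$ for the numbers of components isomorphic to $K_1,P_2,P_3$, tightness of the floor step reads $\lfloor(a+2b)/3\rfloor=0$, i.e. $a+2b\le 2$. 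Enumerating $(a,b)\in\{(0,0),(1,0),(2,0),(0,1)\}$ and reading off $n=a+2b+3c$ in each residue class recovers exactly the four extremal forests in the statement.

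The routine parts are the two base cases and the numerical inequalities needed to pass from Lemma~\ref{lem2.3} to the clean power $3^{\lfloor n/3\rfloor}$. The point that requires care is the equality analysis: the bound carries two independent sources of slack---the per-component estimate and the floor step---and both must be saturated simultaneously. This is exactly what yields the mildly asymmetric list of extremal graphs, in particular the two distinct extremal forests $\frac{n-2}{3}P_3\cup 2K_1$ and $\frac{n-2}{3}P_3\cup P_2$ in the case $n\equiv 2\ (\bmod\ 3)$, arising respectively from $(a,b)=(2,0)$ and $(a,b)=(0,1)$.
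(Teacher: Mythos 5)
Your proof is correct, and it takes a genuinely different route from the paper's. The paper splits on the number of components of order at least $3$: with at most one such component it quotes Lemma \ref{lem2.3} directly, and with at least two it applies the cited forest bound of Lemma \ref{lemma2.4} to the union of the large components, the small components being absorbed because their vertices lie in every maximum dissociation set. You instead make the component decomposition fully explicit: the multiplicativity $m(G)=\prod_i m(G_i)$ (which follows from additivity of the dissociation number over components, in the spirit of Lemma \ref{lem2.6}), the per-component bound $m(T)\le 3^{\lfloor |V(T)|/3\rfloor}$ with equality exactly for $K_1$, $P_2$, $P_3$ extracted from Lemma \ref{lem2.3} alone, and the superadditivity $\sum_i\lfloor n_i/3\rfloor\le\lfloor n/3\rfloor$. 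This dispenses with the external Lemma \ref{lemma2.4} entirely and isolates the two independent sources of slack, so the equality analysis reduces to enumerating $(a,b)$ with $a+2b\le 2$, which also explains cleanly why $n\equiv 2~(\bmod~3)$ admits two extremal forests. The paper's version is shorter on the page because the cited lemma prepackages the multi-component computation; yours is more self-contained and yields a more transparent equality characterization. Your numerical comparisons are right: the bounds of Lemma \ref{lem2.3} fall strictly below $3^{\lfloor n/3\rfloor}$ for every tree order $n\ge 4$ and meet it only at $n=3$, so no gap remains.
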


\begin{proof}
   It is straightforward for $1\leq n\leq5$. We consider the case $n\geq6$. Suppose $G=\bigcup_{i=1}^rT_i$ with $T_1,\ldots, T_r$ being the components of $G$. Moreover, we assume $|V_{i}|\le 2$ for $i=1,\ldots,k$ and $|V_{i}|\ge 3$ for $i=k+1,\ldots,r$.

   Note that $V(T_i)$ is contained in all the maximum dissociation sets if and only if $|V(T_i)|\le 2$ for $i=1,\ldots,r$. If $G$ has at most one component with order larger than 2,  then by Lemma \ref{lem2.3}, we have $m(G)<3^{\lfloor n/3\rfloor}$.

  Now  suppose $G$  has at least two components with order larger than 2, i.e., $r-k\ge 2$.
     Let $s=\sum_{i=1}^k|V(T_i)|$. Applying Lemma \ref{lemma2.4} on $G-T_1-\cdots-T_k$, if $n-s\not\equiv 0$ (mod 3) we have $m(G)< 3^{\lfloor(n-s)/3\rfloor}$; otherwise we have $m(G)\le 3^{(n-s)/3}$, with equality if and only if $G-T_1-\cdots-T_k\cong \frac{n-s}{3}P_3$.
    Therefore, we have the following.
     \begin{itemize}
   \item[(i)] If $n\equiv 0$ (mod 3), then
   $m(G)\le 3^{n/3}$ with equality if and only if $G\cong \frac{n}{3}P_3$;
   \item[(ii)] If   $n\equiv 1$ (mod 3), then $m(G)\le 3^{(n-1)/3}$, with equality if and only if $G\cong \frac{n-1}{3}P_3\cup K_1$;
        \item[(iii)] If   $n\equiv 2$ (mod 3), then $m(G)\le 3^{(n-2)/3}$, with equality if and only if $G\cong \frac{n-2}{3}P_3\cup 2K_1$ or $\frac{n-2}{3}P_3\cup P_2$.
   \end{itemize}
   \end{proof}

By Lemma \ref{lem2.1}, we have ${\rm diss}(C_n)={\rm diss}(P_{n-1})$. Tu, Li and Du \cite{ref12}   prove  that $$m(P_n)<0.81\times6^{n/4} \quad\text{and}\quad m(C_n)\leq6^{n/4},$$ with equality if and only if $n=4$. We present the precise values of $m(P_n)$ and $m(C_n)$ as follows.

\begin{lemma}\label{lem2.2}
    Let $n$ be a positive integer. Then
 \begin{equation*}
m(P_n)=
\begin{cases}
\text{$\frac{1}{18}(n+3)(n+6)$,} &\quad\text{if $n\equiv0$ $(\bmod\text{ }3)$;}\\
\text{$\frac{1}{3}(n+2)$,} &\quad\text{if $n\equiv1$ $(\bmod\text{ }3)$;}\\
\text{$1$,} &\quad\text{if $n\equiv2$ $(\bmod\text{ }3)$}\\
\end{cases}
\end{equation*}
and
 \begin{equation*}
m(C_n)=
\begin{cases}
\text{$3$,} &\quad\text{if $n\equiv0$ $(\bmod\text{ }3)$;}\\
\text{$\frac{1}{6}n(n+5)$,} &\quad\text{if $n\equiv1$ $(\bmod\text{ }3)$;}\\
\text{$n$,} &\quad\text{if $n\equiv2$ $(\bmod\text{ }3)$.}\\
\end{cases}
\end{equation*}
\end{lemma}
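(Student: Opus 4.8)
The plan is to reduce both formulas to the problem of counting binary strings that avoid three consecutive $1$'s, and then to count the extremal strings directly. First I would record the structural observation that a subset $S\subseteq V(P_n)$ (resp. $V(C_n)$) is a dissociation set if and only if $S$ contains no three consecutive (resp. cyclically consecutive) vertices, because on a path or cycle a vertex has degree two in the induced subgraph $G[S]$ exactly when it and both of its neighbours lie in $S$. Encoding $S$ as a $0/1$ string, with $1$ marking membership in $S$, a dissociation set is precisely a string avoiding the factor $111$, read linearly for $P_n$ and cyclically for $C_n$. By Lemma~\ref{lem2.1} a maximum dissociation set of $P_n$ has $\lceil 2n/3\rceil$ vertices, hence $z_P:=\lfloor n/3\rfloor$ zeros, while a maximum dissociation set of $C_n$ has $\lfloor 2n/3\rfloor$ vertices, hence $z_C:=\lceil n/3\rceil$ zeros; in every residue class one computes $z_P=m$ where $n=3m+r$ with $r\in\{0,1,2\}$.

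For $P_n$ the $z_P$ zeros cut the string into $z_P+1$ maximal runs of $1$'s, each of length at most $2$, with total length $n-z_P$. Thus $m(P_n)$ equals the number of tuples $(a_0,\dots,a_{z_P})$ with $a_i\in\{0,1,2\}$ and $\sum a_i=n-z_P$. Passing to the complementary variables $b_i=2-a_i$, the sum $\sum b_i$ equals the deficit $2(m+1)-(n-m)=3m+2-n$, which is $2,1,0$ in the cases $n\equiv 0,1,2\ (\bmod\ 3)$. Counting tuples $(b_i)\in\{0,1,2\}^{m+1}$ with these small prescribed sums is elementary: deficit $0$ gives one tuple; deficit $1$ gives $m+1$ tuples; deficit $2$ gives $(m+1)+\binom{m+1}{2}=\tfrac12(m+1)(m+2)$ tuples. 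Re-expressing $m$ through $n$ yields the three stated values $\tfrac1{18}(n+3)(n+6)$, $\tfrac13(n+2)$ and $1$.

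For $C_n$ the same run description applies, except the $z_C$ zeros now create $z_C$ cyclic gaps of length at most $2$ summing to $n-z_C$. To count labelled configurations I would use a marking (cycle-lemma type) argument: counting pairs $(Z,p)$, where $Z$ is the set of $z_C$ zeros and $p\in Z$ is a distinguished zero, and reading the gap lengths clockwise from $p$ sets up a bijection between such pairs and pairs consisting of a starting position in $\{1,\dots,n\}$ together with a linear tuple $(a_1,\dots,a_{z_C})\in\{0,1,2\}^{z_C}$ with $\sum a_i=n-z_C$. Since each admissible $Z$ carries exactly $z_C$ marks, this gives $m(C_n)=\tfrac{n}{z_C}\,L$, where $L$ is the number of such linear tuples, i.e. exactly the quantity already counted for the path. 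Substituting $z_C=m,m+1,m+1$ and $L=1,\tfrac12(m+1)(m+2),m+1$ in the cases $n\equiv 0,1,2\ (\bmod\ 3)$ produces $3$, $\tfrac16 n(n+5)$ and $n$, respectively.

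The main obstacle is the cyclic count: one must justify the marking bijection carefully, in particular that reading gaps clockwise from the marked zero is well defined, that every admissible $Z$ is recovered exactly once, and that dividing by $z_C$ is legitimate, which needs $z_C\ge 1$. The latter always holds here because on a cycle the all-ones string has every vertex of degree two and so is never a dissociation set. A minor point is to confirm the reduction to $111$-avoidance and the smallest cases ($P_1,P_2$ and $C_3$) directly. As an alternative to the cyclic bijection one could fix a vertex $v$ and use the decomposition $MD(C_n)=MD(C_n,v^0)\sqcup MD(C_n,v^1)\sqcup MD(C_n,v^-)$ together with $C_n-v=P_{n-1}$ and the identity $\diss(C_n)=\diss(P_{n-1})$ to relate $m(C_n)$ to path quantities; however, I expect the direct gap count above to be the shorter route.
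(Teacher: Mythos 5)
Your proof is correct, and it takes a genuinely different route from the paper's. The paper argues by induction: for $P_n$ it picks an end vertex $v$, uses Lemma \ref{lem2.1} to decide which of $MD(P_n,v^-)$, $MD(P_n,v^0)$, $MD(P_n,v^1)$ are nonempty, and recurses on $m(P_{n-1})$, $m(P_{n-2})$, $m(P_{n-3})$; the cycle formulas are then obtained by the same vertex decomposition applied to a vertex of $C_n$, reducing everything to already-computed path values. You instead encode dissociation sets as $111$-avoiding binary strings, pin down the number of zeros from Lemma \ref{lem2.1}, and count admissible gap vectors directly: a stars-and-bars count on the complementary variables $b_i=2-a_i$ for $P_n$, and a marking (cycle-lemma) bijection giving $m(C_n)=\frac{n}{z_C}L$ for $C_n$. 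I checked the arithmetic in all residue classes, the small cases $P_1$, $P_2$, $C_3$, $C_4$, and the marking bijection itself: each admissible zero-set $Z$ is counted exactly $|Z|=z_C\ge1$ times, and each pair consisting of a starting position and a gap tuple reconstructs a unique marked configuration, so the division by $z_C$ is legitimate. Your route is shorter, non-inductive, and makes the structure of maximum dissociation sets transparent (they are exactly the strings with the prescribed number of zeros and all runs of ones of length at most $2$). What the paper's computation buys in exchange is that it produces, as a byproduct, the three separate quantities $m(C_k,u^-)$, $m(C_k,u^0)$, $m(C_k,u^1)$ recorded in Corollary \ref{CLAIM3.2} and used heavily later; your aggregate count would need a small additional case analysis of the gap configuration around the distinguished vertex $u$ to recover that refinement.
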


\begin{proof}
    Suppose $n\equiv 2$ (mod 3), say,  $n=3k+2$. Let $P_n=u_1u_2\cdots u_n$, $P^{(i)}=u_{3i-2}u_{3i-1}u_{3i}\text{ for }i=1,\ldots ,k$ and $P^{(k+1)}=u_{3k+1}u_{3k+2}$. Suppose  $F\in MD(P_{n})$ is a maximum dissociation set of $P_n$.    By Lemma \ref{lem2.1}, we have ${\rm diss}(P_{3k+2})=2k+2$.  Notice that $F$ contains at most two vertices from each $P^{(i)}$ for $i=1,2,\ldots,k+1$. It follows that $F$   contains exactly two vertices from   each $P^{(i)}$ for $i=1,2,\ldots,k+1$.  Therefore, we have $F=\bigcup_{i=0}^{k}\{u_{3i+1},u_{3i+2}\}$ and $m(P_{n})=1$.

 Suppose $n\equiv 1$ (mod 3), say,  $n=3k+1$.
 We prove $m(P_{3k+1})=k+1$ by using induction on $k$. Clearly, the result holds for $k\in\{0,1\}$. Assume $m(P_{3k-2})=k$ and we consider the path $P_{3k+1}$.

 Let $v$ be an end vertex of $P_{3k+1}$ with $N(v)=\{u\}$. By Lemma \ref{lem2.1}, we have
  $${\rm diss}(P_{3k+1})=2k+1, \quad {\rm diss}(P_{3k-1})=2k\quad  \text{ and }\quad {\rm diss}(P_{3k-2})=2k-1.$$ It follows that
   $$m(P_{3k+1},v^-)=0,\quad m(P_{3k+1},v^0)>0\quad \text{ and }\quad m(P_{3k+1},v^1)>0.$$ By the induction hypothesis, we have
  \begin{align*}
  m(P_{3k+1})&=m(P_{3k+1},v^0)+m(P_{3k+1},v^1) \\
   &=m(P_{3k+1}-v-u)+m(P_{3k+1}-N[u]) \\
   &=1+k\\
   &=\frac{1}{3}(n+2).
  \end{align*}

  Suppose $n\equiv 0$ (mod 3), say,  $n=3k$. Similarly as above, we prove $m(P_{3k})=(k+1)(k+2)/2$ by using induction on $k$. Obviously, the result is true for $k=1$. Assume $m(P_{3k-3})=k(k+1)/2$ and we consider the path $P_{3k}$. Let $v$ be an end vertex of $P_{3k}$ with $N(v)=\{u\}$ and $N(u)=\{v,w\}$. By Lemma \ref{lem2.1}, we have
   $$m(P_{3k},v^-)>0, \quad m(P_{3k},v^0)>0 \quad \text{and}\quad m(P_{3k},v^1)>0.$$
   Since $m(P_{3k+2})=1$ and $m(P_{3k+1})=k+1$, by the induction hypothesis, we have
  \begin{align*}
     m(P_{3k})&=m(P_{3k},v^-)+m(P_{3k},v^0)+m(P_{3k},v^1)\\
    &=m(P_{3k}-v)+m(P_{3k}-\{v,u\})+m(P_{3k}-\{v,u,w\})\\
    &=1+k+\frac{1}{2}k(k+1)\\
    &=\frac{1}{18}(n+3)(n+6).
 \end{align*}

Now  we consider $m(C_n)$. Choose a vertex $v\in V(C_{3k})$.   If $n=3k$, by Lemma \ref{lem2.1}, we have $$m(C_{3k}, v^0)=0,\quad m(C_{3k}, v^-)>0  \quad\text{and}\quad  m(C_{3k}, v^1)>0.$$ It follows that
 \begin{align*}
 m(C_n)&=m(C_{3k})=m(C_{3k}, v^-)+m(C_{3k}, v^1)\\
 &=m(C_{3k}-v)+\sum\limits_{u\in N(v)}m(C_{3k}-N[v]\cup N[u])\\
 &=m(P_{3k-1})+2m(P_{3k-4})\\
 &=3.
\end{align*}
If $n=3k+1$,   by Lemma \ref{lem2.1}, we have
 $$m(C_{3k+1}, v^-)>0, \quad m(C_{3k+1}, v^0)>0\quad \text{and}\quad m(C_{3k+1}, v^1)>0.$$ It follows that
\begin{align*}
m(C_n)&=m(C_{3k+1})=m(C_{3k+1}, v^-)+m(C_{3k+1}, v^0)+m(C_{3k+1}, v^1)\\
&=m(P_{3k})+m(P_{3k-2})+2m(P_{3k-3})\\
&=(k+1)(k+2)/2+k+k(k+1)\\
&=\frac{1}{6}n(n+5).
\end{align*}
If $n=3k+2$, by Lemma \ref{lem2.1},  we have
$$m(C_{3k+2}, v^-)>0, \quad m(C_{3k+2}, v^0)>0\quad\text{and}\quad m(C_{3k+2}, v^1)>0, $$which lead  to
\begin{align*}
m(C_n)&=m(C_{3k+2})=m(C_{3k+2}, v^-)+m(C_{3k+2}, v^0)+m(C_{3k+2}, v^1)\\
&=m(P_{3k+1})+m(P_{3k-1})+2m(P_{3k-2})\\
&=k+1+1+2k\\
&=n.
\end{align*}This completes the proof.
\end{proof}
By Lemma \ref{lem2.1} and Lemma \ref{lem2.2}, we have the following corollary.
\begin{corollary}\label{CLAIM3.2}
Let $u$ be a vertex in a cycle $C_k$.

\item[(i)] If $k\equiv0~(mod \text{ }3),$ then $m(C_k,u^-)=1$, $m(C_k,u^1)=2$, $m(C_k,u^0)=0;$
\item[(ii)] If $k\equiv1~(mod\text{ }3),$ then $m(C_k,u^-)=(k+2)(k+5)/18$, $m(C_k,u^1)=(k-1)(k+2)/9$, $m(C_k,u^0)=(k-1)/3;$
\item[(iii)] If $k\equiv2~(mod\text{ }3),$ then $m(C_k,u^-)=(k+1)/3$, $m(C_k,u^1)=2(k-2)/3$, $m(C_k,u^0)=1.$
\end{corollary}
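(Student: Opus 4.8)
The plan is to evaluate each of the three counts by deleting, in turn, the vertex $u$, its closed neighborhood $N[u]$, and $N[u]$ together with the closed neighborhood of a neighbor of $u$. Each deletion turns $C_k$ into a path, so that Lemma \ref{lem2.1} decides whether a maximum dissociation set of $C_k$ of the given type exists and Lemma \ref{lem2.2} counts those that do. Fix $u\in V(C_k)$, write $N(u)=\{a,b\}$, and let $a'$ be the neighbor of $a$ distinct from $u$. I adopt the convention that the empty graph has dissociation number $0$ and exactly one (empty) dissociation set, which keeps the reductions valid for small $k$.

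First I would establish three reduction identities. A set $F\in MD(C_k,u^-)$ is precisely a dissociation set of $C_k-u\cong P_{k-1}$ of size $\mathrm{diss}(C_k)$; hence
\[
m(C_k,u^-)=\begin{cases} m(P_{k-1}), & \text{if } \mathrm{diss}(P_{k-1})=\mathrm{diss}(C_k),\\ 0,&\text{otherwise.}\end{cases}
\]
A set $F\in MD(C_k,u^0)$ satisfies $u\in F$ and $a,b\notin F$, so $F\setminus\{u\}$ is a dissociation set of $C_k-N[u]\cong P_{k-3}$ of size $\mathrm{diss}(C_k)-1$; thus $m(C_k,u^0)=m(P_{k-3})$ when $\mathrm{diss}(P_{k-3})=\mathrm{diss}(C_k)-1$ and $0$ otherwise. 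Finally, $F\in MD(C_k,u^1)$ uses exactly one neighbor of $u$, say $a$; then $b\notin F$, and since $a$ already meets $u$ we must have $a'\notin F$, so $F\setminus\{u,a\}$ is a dissociation set of $C_k-(N[u]\cup N[a])\cong P_{k-4}$ of size $\mathrm{diss}(C_k)-2$. The symmetric choice of $b$ contributes an equal number, so $m(C_k,u^1)=2\,m(P_{k-4})$ when $\mathrm{diss}(P_{k-4})=\mathrm{diss}(C_k)-2$ and $0$ otherwise.

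It then remains to run the arithmetic in each residue class $k\equiv 0,1,2\pmod 3$. Using $\mathrm{diss}(C_k)=\lfloor 2k/3\rfloor$ and $\mathrm{diss}(P_m)=\lceil 2m/3\rceil$ from Lemma \ref{lem2.1}, I would check for each of $P_{k-1},P_{k-3},P_{k-4}$ whether its dissociation number matches the required target $\mathrm{diss}(C_k)$, $\mathrm{diss}(C_k)-1$, $\mathrm{diss}(C_k)-2$ respectively. In every class exactly the expected matches occur; for example, when $k\equiv0$ one gets $\mathrm{diss}(P_{k-3})=\mathrm{diss}(C_k)-2<\mathrm{diss}(C_k)-1$, forcing $m(C_k,u^0)=0$, whereas $P_{k-1}$ and $P_{k-4}$ hit their targets. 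Substituting the closed-form values of $m(P_n)$ from Lemma \ref{lem2.2}, with $n=k-1,k-3,k-4$ re-expressed through $k$, then reproduces precisely the three listed formulas in each case.

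The one step requiring care is the dissociation-number comparison: the whole computation hinges on correctly deciding, via the floor/ceiling identities of Lemma \ref{lem2.1}, whether the residual path can realize a maximum dissociation set of $C_k$ (giving the factor $m(P_{k-j})$) or provably cannot (giving $0$), rather than silently assuming it. This is also where the small cycles $k\in\{3,4,5\}$, for which $P_{k-3}$ or $P_{k-4}$ degenerates to the empty graph, must be confirmed against the stated convention so that the formulas persist at the boundary.
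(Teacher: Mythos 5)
Your proposal is correct and is essentially the paper's own (implicit) argument: the paper derives this corollary from Lemmas \ref{lem2.1} and \ref{lem2.2}, and the proof of Lemma \ref{lem2.2} already contains exactly your reductions $m(C_k,u^-)=m(P_{k-1})$, $m(C_k,u^0)=m(P_{k-3})$, $m(C_k,u^1)=2m(P_{k-4})$ (each guarded by the dissociation-number comparison via Lemma \ref{lem2.1}). Your explicit attention to the vanishing cases and to the degenerate small paths is a faithful elaboration of the same computation.
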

A {\it cut vertex} of a graph is a vertex whose deletion increases the number of components.
\begin{lemma}\label{lem2.5}
  Suppose $G$ is a graph with a cut vertex $u$ from  $V_1\subset V(G)$.   Denote by $G_1=G[V_1]$ and $G_2=G[V(G)\setminus V_1]$. If ${\rm diss}(G_1-u)={\rm diss}(G_1)$ and $G$ contains no edge  between $V_1\setminus\{u\}$ and $V(G)\setminus V_1$, then $${\rm diss}(G)={\rm diss}(G_1)+{\rm diss}(G_2).$$
\end{lemma}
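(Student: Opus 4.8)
The plan is to prove the equality by establishing the two inequalities ${\rm diss}(G)\le {\rm diss}(G_1)+{\rm diss}(G_2)$ and ${\rm diss}(G)\ge {\rm diss}(G_1)+{\rm diss}(G_2)$ separately. The upper bound is the routine direction and uses neither hypothesis. Starting from a maximum dissociation set $F$ of $G$, I would split it as $F_1=F\cap V_1$ and $F_2=F\cap(V(G)\setminus V_1)$. Since $G_1=G[V_1]$ and $F_1\subseteq V_1$, the induced subgraph $G_1[F_1]=G[F_1]$ is an induced subgraph of $G[F]$, which has maximum degree at most one; hence $F_1$ is a dissociation set of $G_1$, and likewise $F_2$ is a dissociation set of $G_2$. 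This gives ${\rm diss}(G)=|F|=|F_1|+|F_2|\le {\rm diss}(G_1)+{\rm diss}(G_2)$, with no use of the cut-vertex structure.

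For the lower bound, both hypotheses enter. The idea is to exhibit a single dissociation set of $G$ of size ${\rm diss}(G_1)+{\rm diss}(G_2)$ by concatenating suitable dissociation sets of the two sides. The subtlety is that gluing an arbitrary maximum dissociation set of $G_1$ to one of $G_2$ may fail: if the chosen set contains $u$ and $u$ already has a neighbor inside $G_1$ belonging to the set, then adding a neighbor of $u$ that lies in $G_2$ would raise the degree of $u$ to two, destroying the dissociation property. This is exactly what the assumption ${\rm diss}(G_1-u)={\rm diss}(G_1)$ is meant to bypass: it furnishes a maximum dissociation set $F_1$ of $G_1$ with $u\notin F_1$, so that $F_1\subseteq V_1\setminus\{u\}$ and $|F_1|={\rm diss}(G_1)$.

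I would then take a maximum dissociation set $F_2$ of $G_2$ and argue that $F_1\cup F_2$ is a dissociation set of $G$. Here the second hypothesis, that $G$ has no edge between $V_1\setminus\{u\}$ and $V(G)\setminus V_1$, is decisive: since $F_1\subseteq V_1\setminus\{u\}$ and $F_2\subseteq V(G)\setminus V_1$, no edge of $G$ joins $F_1$ to $F_2$, so $G[F_1\cup F_2]$ is the disjoint union of $G_1[F_1]$ and $G_2[F_2]$ and therefore has maximum degree at most one. Consequently ${\rm diss}(G)\ge |F_1|+|F_2|={\rm diss}(G_1-u)+{\rm diss}(G_2)={\rm diss}(G_1)+{\rm diss}(G_2)$, and combining with the upper bound yields the claim.

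On where the difficulty lies: I do not expect any genuine obstacle or heavy computation here. The whole content is the single observation that deleting $u$ costs nothing in $G_1$, which lets us keep $u$ out of the glued set so that the no-cross-edge hypothesis can then guarantee the union stays a dissociation set. The only points deserving care are the degenerate cases, for instance when $V(G)\setminus V_1$ is empty, so that $G_2$ is the empty graph with ${\rm diss}(G_2)=0$ and the statement is immediate, and making sure the induced-subgraph identities $G_1[F_1]=G[F_1]$ and $G_2[F_2]=G[F_2]$ are invoked correctly when passing between $G$ and its two induced pieces.
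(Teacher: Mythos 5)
Your proposal is correct and follows essentially the same route as the paper: the upper bound by splitting a maximum dissociation set of $G$ along $V_1$, and the lower bound by gluing a maximum dissociation set of $G_1-u$ (which has size ${\rm diss}(G_1)$ by hypothesis) to one of $G_2$, using the absence of edges between $V_1\setminus\{u\}$ and $V(G)\setminus V_1$ to keep the union a dissociation set. No discrepancies worth noting.
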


\begin{proof}
     It is clear that ${\rm diss}(G)\leq {\rm diss}(G_1)+{\rm diss}(G_2)$. On the other hand, let $F_1$ and $F_2$ be two maximum dissociation sets of $G_1-u$ and $G_2$, respectively. Then $F_1\cup F_2$ is a dissociation set of $G$. Therefore, $${\rm diss}(G_1)+{\rm diss}(G_2)={\rm diss}(G_1-u)+{\rm diss}(G_2)=|F_1|+|F_2|\leq {\rm diss}(G).$$ Hence, we have ${\rm diss}(G)={\rm diss}(G_1)+{\rm diss}(G_2)$.
\end{proof}

Let $S$ and $T$ be two collections of subsets of a set $V$. We denote by
 $$S\oplus T=\{a\cup b: a\in S, b\in T\}.$$

\begin{lemma}\label{lem2.6}
    Let $G$ be a graph and $V_1\subset V(G)$. If ${\rm diss}(G)={\rm diss}(G[V_1])+{\rm diss}(G- V_1)$, then we have $$MD(G)\subseteq MD(G[V_1])\oplus MD(G-V_1) $$
    and consequently,
    $$m(G)\leq m(G[V_1])\cdot m(G- V_1).$$
\end{lemma}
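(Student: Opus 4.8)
The plan is to show that the hypothesis forces every maximum dissociation set of $G$ to split tightly across the two vertex classes $V_1$ and $V(G)\setminus V_1$. First I would fix an arbitrary $F\in MD(G)$ and write $F=F_1\cup F_2$, where $F_1=F\cap V_1$ and $F_2=F\cap(V(G)\setminus V_1)$. Since $V_1$ and $V(G)\setminus V_1$ partition $V(G)$, this is a disjoint union, so $|F|=|F_1|+|F_2|$, and the union $F_1\cup F_2$ determines $F_1$ and $F_2$ uniquely (by intersecting with $V_1$ and its complement).

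The key observation is that $G[F_1]$ is an induced subgraph of $G[F]$. Because $F$ is a dissociation set, $G[F]$ has maximum degree at most one, and this property is inherited by any induced subgraph; hence $G[F_1]$ also has maximum degree at most one, so $F_1$ is a dissociation set of $G[V_1]$ and $|F_1|\le {\rm diss}(G[V_1])$. By the same reasoning, $F_2$ is a dissociation set of $G-V_1$ and $|F_2|\le {\rm diss}(G-V_1)$. Now I would invoke the hypothesis: since
$$|F_1|+|F_2|=|F|={\rm diss}(G)={\rm diss}(G[V_1])+{\rm diss}(G-V_1),$$
and each summand on the left is bounded above by the corresponding term on the right, equality must hold in both bounds. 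Thus $|F_1|={\rm diss}(G[V_1])$ and $|F_2|={\rm diss}(G-V_1)$, which means $F_1\in MD(G[V_1])$ and $F_2\in MD(G-V_1)$. Therefore $F=F_1\cup F_2\in MD(G[V_1])\oplus MD(G-V_1)$, establishing the containment $MD(G)\subseteq MD(G[V_1])\oplus MD(G-V_1)$.

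For the cardinality bound, I would note that $MD(G[V_1])\oplus MD(G-V_1)$ is by definition the image of $MD(G[V_1])\times MD(G-V_1)$ under the map $(a,b)\mapsto a\cup b$, so its cardinality is at most $|MD(G[V_1])|\cdot|MD(G-V_1)|=m(G[V_1])\cdot m(G-V_1)$; combining this with the containment yields $m(G)\le m(G[V_1])\cdot m(G-V_1)$. I expect no real obstacle in this argument: the entire content is the tightness forced by the additivity hypothesis on ${\rm diss}$, and the only point demanding (routine) care is the inheritance of the maximum-degree-at-most-one condition under passing to induced subgraphs, which is precisely what guarantees that the restrictions $F_1$ and $F_2$ are themselves dissociation sets of the respective pieces.
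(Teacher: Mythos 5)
Your argument is correct and is essentially the paper's own proof: both decompose an arbitrary $F\in MD(G)$ as $F_1=F\cap V_1$, $F_2=F\setminus V_1$ and use the additivity hypothesis to force $|F_1|={\rm diss}(G[V_1])$ and $|F_2|={\rm diss}(G-V_1)$. You merely make explicit two routine points the paper leaves implicit (that the degree-at-most-one condition passes to induced subgraphs, and that $|S\oplus T|\le|S|\cdot|T|$).
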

\begin{proof}
	For any $F\in MD(G)$, let $F_1=F\cap V_1$ and $F_2=F\cap(V(G)\setminus V_1)$. Then$$|F_1|+|F_2|=|F|={\rm diss}(G)={\rm diss}(G[V_1])+{\rm diss}(G-V_1)\geq|F_1|+|F_2|,$$
which implies $F_1\in MD(G[V_1])$ and $F_2\in MD(G-V_1)$. Hence, we have   $$F\in MD(G[V_1])\oplus MD(G-V_1),$$ which leads to $MD(G)\subseteq MD(G[V_1])\oplus MD(G-V_1) $.
\end{proof}

\begin{corollary}\label{co2.9}
Let $G\in\mathcal{P}(n,k)$ such that $G-C_k$ consists of $r$ components  $T_1,\ldots, T_r$.
Then
\begin{eqnarray*}
 {\rm diss}(G)&=&{\rm diss}(T_x)+{\rm diss}(G-T_x),\\
 MD(G)&\subseteq& MD(T_x)\oplus MD(G-T_x),\\
  m(G)&\le& m(T_x)\cdot m(G-T_x)
\end{eqnarray*}
for $x=1,\ldots,r$.
\end{corollary}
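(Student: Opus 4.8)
The plan is to fix an index $x\in\{1,\ldots,r\}$ and obtain all three assertions at once from Lemma \ref{lem2.5} and Lemma \ref{lem2.6}, applied with $V_1=V(G)\setminus V(T_x)$ and with the branch vertex $w$ of the cycle in the role of the cut vertex. With this choice $G[V_1]=G-T_x$ and $G[V(G)\setminus V_1]=G[V(T_x)]=T_x$, so Lemma \ref{lem2.5} delivers the dissociation identity and Lemma \ref{lem2.6} delivers the membership inclusion and the product bound; note that $MD(G)\subseteq MD(G-T_x)\oplus MD(T_x)$ is the same statement as $MD(G)\subseteq MD(T_x)\oplus MD(G-T_x)$ since $\oplus$ is symmetric. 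The point of taking $V_1$ to be the complement of $T_x$ (rather than $V(T_x)$ itself) is that the nontrivial hypothesis of Lemma \ref{lem2.5} then concerns $G-T_x$ at the cut vertex $w$, which is easy to check, whereas the analogous hypothesis for $T_x$ at its attachment vertex can fail (for instance when $T_x$ is a single vertex).

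First I would dispatch the structural hypotheses of Lemma \ref{lem2.5}. Since $n\ge k+1$ we have $r\ge 1$, so the branch vertex $w$ has degree $2+r\ge 3$; deleting $w$ turns the cycle into $P_{k-1}$ and detaches every branch, producing $r+1\ge 2$ components, so $w$ is a cut vertex, and clearly $w\in V_1$. Because $G$ is unicyclic and $w$ is the unique cycle vertex of degree exceeding two, each branch of $G-C_k$ meets the cycle only at $w$; in particular every edge leaving $T_x$ is incident with $w$, so there is no edge between $V_1\setminus\{w\}$ and $V(T_x)=V(G)\setminus V_1$.

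The crux is the remaining hypothesis ${\rm diss}\big((G-T_x)-w\big)={\rm diss}(G-T_x)$, which I would prove by sandwiching. Deleting $w$ from $G-T_x$ gives the disjoint union $(G-T_x)-w=P_{k-1}\cup\bigcup_{i\ne x}T_i$, whence ${\rm diss}\big((G-T_x)-w\big)={\rm diss}(P_{k-1})+\sum_{i\ne x}{\rm diss}(T_i)$; and as $(G-T_x)-w$ is an induced subgraph of $G-T_x$, this quantity is at most ${\rm diss}(G-T_x)$. Conversely, restricting any maximum dissociation set of $G-T_x$ to the two parts of the partition $V(G-T_x)=V(C_k)\cup\bigcup_{i\ne x}V(T_i)$ yields the general upper bound ${\rm diss}(G-T_x)\le{\rm diss}(C_k)+\sum_{i\ne x}{\rm diss}(T_i)$. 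Since ${\rm diss}(C_k)={\rm diss}(P_{k-1})$ by Lemma \ref{lem2.1}, the two bounds coincide, forcing the claimed equality.

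With all hypotheses verified, Lemma \ref{lem2.5} gives ${\rm diss}(G)={\rm diss}(T_x)+{\rm diss}(G-T_x)$, and feeding this identity into Lemma \ref{lem2.6} produces $MD(G)\subseteq MD(T_x)\oplus MD(G-T_x)$ and $m(G)\le m(T_x)\cdot m(G-T_x)$, as required. I expect the dissociation-number equality of the previous paragraph to be the only genuinely non-formal step; everything else is bookkeeping with the cut-vertex structure of a potted graph.
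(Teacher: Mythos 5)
Your proposal is correct and follows essentially the same route as the paper: both apply Lemma \ref{lem2.5} to $G$ with $V_1=V(G)\setminus V(T_x)$ and the branch vertex as cut vertex, and both reduce the key hypothesis to the identity ${\rm diss}(C_k)={\rm diss}(P_{k-1})$ from Lemma \ref{lem2.1} before invoking Lemma \ref{lem2.6}. The only cosmetic difference is that you verify ${\rm diss}(G-T_x-w)={\rm diss}(G-T_x)$ by a direct sandwich over the partition, whereas the paper obtains it by a second application of Lemma \ref{lem2.5} to $G-T_x$ with $V_1=V(C_k)$.
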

\begin{proof}
 Let $u$ be the vertex in $C_k$ with degree larger than 2. By Lemma \ref{lem2.1}, we have ${\rm diss}(C_k)={\rm diss}(C_k-u)$. Applying Lemma \ref{lem2.5} on $G-T_x$, we have
\begin{align*}
	{\rm diss}(G-T_x)&={\rm diss}(C_k)+{\rm diss}(G-T_x-C_k)\\
	&={\rm diss}(C_k-u)+{\rm diss}(G-T_x-C_k)\\
	&={\rm diss}(G-T_x-u).
\end{align*}Now the results follow  from Lemma \ref{lem2.5} and Lemma \ref{lem2.6}.
\end{proof}

\begin{lemma}\label{CLAIM3.3}
	Let  $G\in\mathcal{P}(n,k)$  such that $G-C_k$ consists of $r$ components $T_1,\ldots,T_r$  and $v_i\in V(T_i)$ is adjacent to $u\in V(C_k)$ for $i=1,\ldots,r$. Then
	\begin{align*}
		m(G)&=m(G,u^-)+m(G,u^0)+m(G,u^1),
	\end{align*}
	where
\begin{eqnarray}
m(G,u^-)=m(C_k,u^-)\prod\limits_{i=1}^{r}m(T_i),\nonumber\\ m(G,u^0)=m(C_k,u^0)\prod\limits_{i=1}^{r}\overline{m}(T_i-v_i)\label{eq0420}
\end{eqnarray} and
\begin{equation*}\label{eqh0413}
m(G,u^1)=m(C_k,u^1)\prod\limits_{i=1}^{r}\overline{m}(T_i-v_i)+m(C_k,u^0)\sum\limits_{j=1}^{r}\left[\overline{m}(T_j-N(v_j))\prod\limits_{i\neq j}\overline{m}(T_i-v_i)\right].
\end{equation*}
	\begin{proof}
By the definitions of $m(G,u^-)$, $m(G,u^0)$ and $m(G,u^1)$ we get the first equation directly.
			Note that ${\rm diss}(C_k)={\rm diss}(C_k-u)$. By Lemma \ref{lem2.5}, we have
\begin{equation}\label{eqh0411}
{\rm diss}(G)={\rm diss}(C_k)+{\rm diss}(G-C_k).
 \end{equation}
 For any maximum dissociation sets $F_1\in MD(C_k)$ and $F_2\in MD(G-C_k)$ such that $u\notin F_1$,   we have $F_1\cup F_2\in MD(G,u^-)$, which implies $$MD(C_k,u^-)\oplus MD(G-C_k)\subseteq MD(G,u^-).$$

On the other hand, for any  $F\in MD(G,u^-)$, let $F_1=F\cap V(C_k)$ and $F_2=F\cap (V(G)\setminus V(C_k))$. Then $$|F_1|+|F_2|={\rm diss}(G)={\rm diss}(C_k)+{\rm diss}(G-C_k)\geq|F_1|+|F_2|,$$which implies that $F_1\in MD(C_k,u^-)$ and $F_2\in MD(G-C_k)$. Hence, we get $$MD(G,u^-)\subseteq MD(C_k,u^-)\oplus MD(G-C_k).$$

Therefore, we have $MD(G,u^-)=MD(C_k,u^-)\oplus MD(G-C_k),$   and hence $$m(G,u^-)=|MD(G,u^-)|=|MD(C_k,u^-)||MD(G-C_k)|=m(C_k,u^-)\prod\limits_{i=1}^{r}m(T_i).$$

If there is some $i\in\{1,\ldots,r\}$ such that ${\rm diss}(T_i)\ne {\rm diss}(T_i-v_i)$, i.e., $\m(T_i-v_i)=0$, then by \eqref{eqh0411}, we have $m(G,u^0)=0$.
If $\diss(T_i)= \diss(T_i-v_i)$ for $i=1,\ldots,r$,
denote by $\mathcal{F}=\{F\in MD(G-C_k): v_i\not\in F,~ i=1,\ldots,r\}$. Then applying similar arguments as above we can deduce
$$MD(G,u^0)=MD(C_k,u^0)\oplus \mathcal{F},$$
which leads to
$$m(G,u^0)=|MD(G,u^0)|=|MD(C_k,u^0)|\cdot| \mathcal{F}|=m(C_k,u^0)\prod\limits_{i=1}^{r}\overline{m}(T_i-v_i).$$
In both cases we get \eqref{eq0420}.

We denote by $$\mathcal{F}_0=MD(C_k,u^1)\oplus MD(G-C_k-\{v_1,\ldots,v_r\})$$ when $ {\rm diss}(T_i)= \diss(T_i-v_i)$ for all  $i=1,\ldots,r$; otherwise let $\mathcal{F}_0=\emptyset$.
For $j=1,\ldots,r$, we denote by $$\mathcal{F}_j=MD(C_k,u^0)\oplus MD(T_j,v_j^0)\oplus MD(G-C_k-T_j-\{v_1,\ldots, v_{j-1},v_{j+1},\ldots,v_r\})$$ if $\diss(T_j-N(v_j))=\diss(T_j)$ and $\diss(T_i-v_i)=\diss(T_i)$ for all $i\in\{1,\ldots,r\}\setminus\{j\}$; otherwise let $\mathcal{F}_j=\emptyset$. Now we prove
\begin{equation}\label{eqh0415}
MD(G,u^1)=\cup_{j=0}^r\mathcal{F}_j.
\end{equation}

 If ${\rm diss}(T_i)={\rm diss}(T_i-v_i) $ for $i=1,\ldots,r$, then for all $F_1\in MD(C_k,u^1)$ and $F_2\in MD(G-C_k-\{v_1,\ldots,v_r\})$, by \eqref{eqh0411} we have $F_1\cup F_2\in MD(G,u^1),$  which leads to $\mathcal{F}_0\subseteq MD(G,u^1)$.

Given $j\in\{1,\ldots,r\}$, if  $\diss(T_j)=\diss(T_i-N(v_j))$ and $\diss(T_i)=\diss(T_i-v_i)$ for all $i\in\{1,\ldots,r\}\setminus\{j\}$,	then for all
 $M_0\in MD(C_k,u^0)$, $M_j\in MD(T_j,v_j^0)$ and $M_i\in MD(T_i,v_i^-)$ for $i\in\{1,\ldots,r\}\setminus\{j\}$, we have $\bigcup_{t=0}^rM_t\in MD(G,u^1),$
		  which implies $ \mathcal{F}_j\subseteq MD(G,u^1)$.

On the other hand, 			
let $F\in MD(G,u^1)$. If $N(u)\cap F\subset V(C_k)$, let $F_1=F\cap V(C_k)$ and $F_2=F\cap (V(G)\setminus V(C_k))$. Then $$|F_1|+|F_2|=|F|=\diss(G)=\diss(C_k)+\diss(G-C_k)\geq|F_1|+|F_2|,$$which implies   $F_1\in MD(C_k,u^1)$ and $F_2\in MD(G-C_k-\{v_1,\ldots,v_r\})$. Therefore, we get    $F\in\mathcal{F}_0$.

 If  $N(u)\cap F\subset V(T_j)$ with $j\in\{1,\ldots,r\}$, let $M_0=F\cap V(C_k)$,  $M_i=F\cap V(T_i)$ for $i=1,\ldots,r$. Then$$\sum\limits_{i=0}^r|M_i|=|F|=\diss(G)=\diss(C_k)+\sum\limits_{i=1}^r\diss(T_i)\geq|M_0|+\sum\limits_{i=1}^r|M_i|,$$which implies that $M_0\in MD(C_k,u^0)$, $M_j\in MD(T_j,v_j^0)$ and $M_i\in MD(T_i,v_i^-)$ for $i\in\{1,\ldots,r\}\setminus\{j\}$. Therefore, we get $F\in\mathcal{F}_j$.

  Therefore  for any $F\in MD(G,u^1)$, we have $F\in\bigcup_{j=0}^r\mathcal{F}_j$, i.e., $MD(G,u^1)\subseteq \bigcup_{j=0}^r\mathcal{F}_j$. Hence,  we have \eqref{eqh0415}.

   Notice that $\mathcal{F}_i\cap\mathcal{F}_j=\emptyset$ for all distinct $i,j\in\{0,1,\ldots,r\}$. We have
			\begin{align*}
				m(G,u^1)&=|MD(G,u^1)|=\sum_{j=0}^r|\mathcal{F}_j|\\
				&=m(C_k,u^1)\prod\limits_{i=1}^{r}\overline{m}(T_i-v_i)+m(C_k,u^0)\sum\limits_{j=1}^{r}\left[\overline{m}(T_j-N(v_j))\prod\limits_{i\neq j}\overline{m}(T_i-v_i)\right].
			\end{align*}This completes the proof.
	\end{proof}
\end{lemma}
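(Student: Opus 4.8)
The plan is to partition $MD(G)$ according to the role of the unique vertex $u\in V(C_k)$ of degree larger than $2$, which is exactly the vertex through which every edge between $C_k$ and the trees $T_1,\dots,T_r$ passes. The identity $m(G)=m(G,u^-)+m(G,u^0)+m(G,u^1)$ is then immediate: every $F\in MD(G)$ either omits $u$ or contains $u$ with $d_{G[F]}(u)\in\{0,1\}$, and these three alternatives are mutually exclusive. The backbone of all three counts is the additivity ${\rm diss}(G)={\rm diss}(C_k)+{\rm diss}(G-C_k)$, which I would derive from Lemma \ref{lem2.5} together with ${\rm diss}(C_k-u)={\rm diss}(C_k)$ (Lemma \ref{lem2.1}); since $G-C_k=T_1\cup\cdots\cup T_r$ is a disjoint union, this also gives ${\rm diss}(G-C_k)=\sum_i{\rm diss}(T_i)$ and $m(G-C_k)=\prod_i m(T_i)$.

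For $m(G,u^-)$ the plan is to prove the set identity $MD(G,u^-)=MD(C_k,u^-)\oplus MD(G-C_k)$ in the spirit of Lemma \ref{lem2.6}. The inclusion $\supseteq$ is direct: for $F_1\in MD(C_k,u^-)$ and $F_2\in MD(G-C_k)$, deleting $u$ from $F_1$ kills the only bridging vertex, so $F_1\cup F_2$ induces no edge between $C_k$ and the trees, and its size equals ${\rm diss}(G)$ by additivity, hence it is maximum. For $\subseteq$, I would split an arbitrary $F\in MD(G,u^-)$ as $F_1=F\cap V(C_k)$ and $F_2=F\setminus V(C_k)$ and use a size comparison against additivity to force both restrictions to be maximum. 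Taking cardinalities yields $m(G,u^-)=m(C_k,u^-)\prod_i m(T_i)$.

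For $m(G,u^0)$ I would case-split on the preservation of dissociation numbers. When $u\in F$ with $d_{G[F]}(u)=0$, no neighbor of $u$ lies in $F$, so in particular $v_i\notin F$ for every $i$; thus each $T_i$-part must be a maximum dissociation set of $T_i$ avoiding $v_i$, which is possible only when ${\rm diss}(T_i-v_i)={\rm diss}(T_i)$. If this fails for some $i$, then $MD(G,u^0)=\emptyset$, matching $\overline{m}(T_i-v_i)=0$; otherwise an $\oplus$-argument over $MD(C_k,u^0)$ and the $v_i$-avoiding maximum dissociation sets of $G-C_k$ gives $m(G,u^0)=m(C_k,u^0)\prod_i\overline{m}(T_i-v_i)$.

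The hard part will be $m(G,u^1)$, which demands the most careful bookkeeping because $u$'s unique $F$-neighbor can lie either on $C_k$ or be some $v_j$. I would introduce $\mathcal{F}_0=MD(C_k,u^1)\oplus MD(G-C_k-\{v_1,\dots,v_r\})$ for the cycle case (nonempty only when every $T_i$ preserves diss after deleting $v_i$), and for each $j$ a set $\mathcal{F}_j$ assembled from $MD(C_k,u^0)$, $MD(T_j,v_j^0)$, and the $v_i$-avoiding maximum dissociation sets of the remaining trees (nonempty only when ${\rm diss}(T_j-N(v_j))={\rm diss}(T_j)$ and ${\rm diss}(T_i-v_i)={\rm diss}(T_i)$ for $i\neq j$). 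The crux is establishing $MD(G,u^1)=\bigcup_{j=0}^r\mathcal{F}_j$: the inclusions $\mathcal{F}_j\subseteq MD(G,u^1)$ follow term-by-term from additivity, while the reverse requires locating, for each $F$, the single $F$-neighbor of $u$ and then verifying by size comparison that each restriction is maximum in its piece. Since the $\mathcal{F}_j$ are pairwise disjoint, distinguished by where $u$'s partner lives, summing cardinalities and using the identification $m(T_j,v_j^0)=\overline{m}(T_j-N(v_j))$ (deleting $N(v_j)$ isolates $v_j$, and maximality forces ${\rm diss}(T_j-N(v_j))={\rm diss}(T_j)$) produces the two-term formula in the statement.
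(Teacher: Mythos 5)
Your proposal is correct and follows essentially the same route as the paper: the same partition of $MD(G)$ by the role of $u$, the same additivity ${\rm diss}(G)={\rm diss}(C_k)+{\rm diss}(G-C_k)$ via Lemma \ref{lem2.5} and ${\rm diss}(C_k-u)={\rm diss}(C_k)$, the same $\oplus$-decompositions for $MD(G,u^-)$ and $MD(G,u^0)$, and the same disjoint family $\mathcal{F}_0,\ldots,\mathcal{F}_r$ (distinguished by where the unique $F$-neighbour of $u$ lies) for $MD(G,u^1)$, including the identification $m(T_j,v_j^0)=\overline{m}(T_j-N(v_j))$.
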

 \par
Denote by $$f(x)=3^{x-1}+x+1 \quad \text{and}\quad g(x)=3^{x-1}.$$ Let   $r_1,\ldots, r_i$, $q$ be positive integers such that $r_1+\cdots +r_i=q$. Then
\begin{align}\label{2.3}
	(2r_1+1)\cdot f(r_2)\cdots f(r_i)&\geq(2r_1+1)\times2r_2\times\cdots\times2r_i\notag\\
	&\geq2r_1+1+2r_2+\cdots+2r_i\notag\\
	&=2q+1,
\end{align}
with equality if and only if $i=1$.

Next we  present some inequalities that will be   used repeatedly.

\begin{lemma}\label{lem2.8}
Let $t, k_1, k_2,\ldots, k_i$ be positive integers such that $k_1+k_2+\cdots +k_i=t$. If $i\geq2$, then
\begin{align}\label{2.2}
f(k_1)f(k_2)\cdots f(k_i)+2g(k_1)g(k_2)\cdots g(k_i)< f(t)+2g(t).
\end{align}
\end{lemma}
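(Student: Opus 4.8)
The plan is to decouple the two products. Observe first that
\[
\prod_{j=1}^{i} g(k_j)=3^{(k_1-1)+\cdots+(k_i-1)}=3^{\,t-i},
\]
which depends only on $t$ and $i$, not on the individual parts, while $g(t)=3^{t-1}$. Writing the target right-hand side as $f(t)+2g(t)=3^{t}+t+1$, inequality \eqref{2.2} is therefore equivalent to
\[
\prod_{j=1}^{i} f(k_j)+2\cdot 3^{\,t-i}<3^{t}+t+1 .
\]
For fixed $t$ and $i$ the additive term $2\cdot 3^{t-i}$ is constant, so the left-hand side is largest exactly when $\prod_j f(k_j)$ is largest. Thus the problem splits into (a) identifying the composition maximizing $\prod_j f(k_j)$ for fixed $i$, and (b) bounding the resulting expression over all admissible $i$.

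For step (a) I would use that $f$ is strictly log-convex. Indeed a direct expansion gives the identity
\[
f(k-1)f(k+1)-f(k)^2=4\cdot 3^{\,k-2}(k-1)-1>0\qquad(k\ge 2),
\]
so the ratios $f(k+1)/f(k)$ are strictly increasing. By the standard ``spreading increases the product'' argument this yields the two-term merge inequality $f(a)f(b)\le 3\,f(a+b-1)$ for $a,b\ge 1$, with equality iff $\min\{a,b\}=1$. Each application of this inequality merges two parts into one: it lowers the number of parts by one, lowers the total by one, and contributes a factor $3$. Iterating $i-1$ times collapses the composition to a single part of size $t-i+1$ and produces
\[
\prod_{j=1}^{i} f(k_j)\le 3^{\,i-1}\,f(t-i+1),
\]
the maximum being attained at the extreme composition $(t-i+1,1,\dots,1)$.

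Substituting this bound and setting $m:=t-i+1$ (the size of the large part) reduces \eqref{2.2} to the single-variable claim
\[
\phi(m):=(m+1)\,3^{\,t-m}+2\cdot 3^{\,m-1}<2\cdot 3^{\,t-1}+t+1,\qquad 1\le m\le t-1 ,
\]
where the range comes from $2\le i\le t$, and note that $t\ge i\ge 2$ throughout. Here I would show $\phi$ is (discretely) convex on $\{1,\dots,t-1\}$: a short computation gives $\phi(m+1)-\phi(m)=4\cdot 3^{\,m-1}-(2m+1)3^{\,t-m-1}$, and these forward differences are nondecreasing in $m$ (equivalently $\phi''>0$, using $2\ln 3>2$). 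Hence the maximum of $\phi$ is attained at an endpoint, and it remains to check the two boundary values: $\phi(1)=2\cdot 3^{t-1}+2$, which is $<2\cdot 3^{t-1}+t+1$ precisely because $t\ge 2$; and $\phi(t-1)=3t+2\cdot 3^{t-2}$, which is $<2\cdot 3^{t-1}+t+1$ since $2t-1<4\cdot 3^{t-2}$ for all $t\ge 2$. Both inequalities are strict, which delivers the strict inequality in \eqref{2.2}.

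The main obstacle is not any single estimate but the trade-off hidden in $\phi$: its two summands $(m+1)3^{t-m}$ and $2\cdot 3^{m-1}$ attain their maxima at opposite ends of the admissible range of $m$, so no crude term-by-term bound can succeed, and the balance between ``one large branch plus many singletons'' and ``all parts small'' must be resolved by the convexity/endpoint argument. A secondary point requiring care is the propagation of strictness: the reduction in step (a) may be an equality whenever some part equals $1$, so the strict inequality has to be generated entirely at the final endpoint estimates, which is exactly why confirming $t\ge 2$ at every stage is essential.
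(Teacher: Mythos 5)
Your proof is correct, and it takes a genuinely different route from the paper's. The paper proves Lemma \ref{lem2.8} by induction on $t$: it peels one unit off a part $k_1\ge 2$ via the identities $f(k_1'+1)=3f(k_1')-(2k_1'+1)$ and $g(k_1'+1)=3g(k_1')$, applies the induction hypothesis to the sum $t-1$, and then absorbs the resulting error term $2t-1$ using the auxiliary inequality \eqref{2.3}. You instead exploit that $\prod_j g(k_j)=3^{t-i}$ depends only on $t$ and $i$, establish strict log-convexity of $f$ (your identity $f(k-1)f(k+1)-f(k)^2=4(k-1)3^{k-2}-1>0$ checks out), deduce the merge inequality $f(a)f(b)\le 3f(a+b-1)$ with equality iff a part equals $1$, and thereby reduce to the extremal composition $(t-i+1,1,\dots,1)$; the remaining one-variable function $\phi(m)$ is discretely convex (your forward-difference computation is correct), so only the endpoints $m=1$ and $m=t-1$ need checking, and both give strict inequality for $t\ge 2$. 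I verified all the computations, including the range $1\le m\le t-1$ and the propagation of strictness through the possibly-tight merge steps. Your approach buys more structural information — it identifies the worst-case composition for each fixed number of parts and makes visible the trade-off between the two summands — at the cost of a longer chain of standard lemmas; the paper's induction is shorter but leaves the extremal structure implicit. One cosmetic remark: the parenthetical appeal to $\phi''>0$ and $2\ln 3>2$ is unnecessary, since your discrete forward-difference monotonicity already does the job.
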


\begin{proof}
We use induction on $t$.  Since $i\ge 2$, we have $t\ge 2$. If $t=2$, then we have $i=2$ and $k_1=k_2=1$. By a direct computation we have (\ref{2.2}).

  Now we assume $t\ge 3$ and  the result holds for  all integers less than $t$. If $i=t$, then
   $k_1=\cdots=k_i=1$. Again, by a direct computation we have (\ref{2.2}).
Next we suppose $i<t$. Then there exists some $j\in\{1,2,\ldots,i\}$ such that $k_j\geq2$. Without loss of generality,  we may assume $k_1\geq2$.  Let $k_1^{'}=k_1-1$. Then $k'_1+k_2+\cdots+k_i=t-1$. By the induction hypothesis, we have
\begin{align*}
\prod\limits_{r=1}^{i}f(k_r)+2\prod\limits_{r=1}^{i}g(k_r)&=f(k_1^{'}+1)f(k_2)\cdots f(k_i)+2g((k_1^{'}+1)g(k_2)\cdots g(k_i)\\
&= \left[3f(k_1^{'})-(2k_1^{'}+1)\right]f(k_2)\cdots f(k_i)+6g(k_1^{'})g(k_2)\cdots g(k_i)\\
&< 3\left[f(t-1)+2g(t-1)\right]-(2k_1^{'}+1)f(k_2)\cdots f(k_i)\\
&=f(t)+2g(t)+2t-1-(2k_1^{'}+1)f(k_2)\cdots f(k_i)\\
&\leq f(t)+2g(t),
\end{align*}
where the last inequality follows from  (\ref{2.3}).
\end{proof}

Using similar arguments, we have the following two lemmas, whose detailed proofs are presented in Appendix A.
\begin{lemma}\label{lem2.9}
    Let $m,l,k_1,\ldots,k_i$ be positive integers such that  $l\geq5$, $m\ge l+3$ and $$k_1+\cdots+k_i=\left\lfloor\frac{m-l}{3}\right\rfloor.$$

    \item[(i)] If   $m-l\equiv1$ $(mod~3)$, then
    \begin{align}\label{2.5}
    	\frac{1}{3}(l+1)\prod\limits_{j=1}^if(k_j)+\prod\limits_{j=1}^ig(k_j)\leq\frac{1}{3}(l+1)\cdot g\left(\frac{1}{3}(m-l-1)+1\right)+1,
    \end{align}with equality if and only if $i= (m-l-1)/3$, i.e., $k_1=\cdots=k_i=1$.

\item[(ii)] If   $m-l\equiv0$ $(mod~3)$ and $i\ge 2$, then
\begin{eqnarray}\label{2.4}
     &\frac{1}{3}(l+1)\prod\limits_{j=1}^if(k_j)+\frac{1}{3}(2l-1)\prod\limits_{j=1}^ig(k_j)+\sum\limits_{r=1}^i\prod\limits_{j\neq r}g(k_j)\notag\\
     &\leq\frac{1}{3}(l+1)\cdot g\left(\frac{1}{3}(m-l)+1\right)+\frac{1}{3}(m+l-1),
\end{eqnarray}
with equality if and only if $i= (m-l)/3$, i.e., $k_1=\cdots=k_i=1$.
\end{lemma}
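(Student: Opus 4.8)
The plan is to prove Lemma~\ref{lem2.9} by induction on $t:=\lfloor (m-l)/3\rfloor = k_1+\cdots+k_i$, exactly mirroring the structure of the proof of Lemma~\ref{lem2.8}. The key structural observation is that both inequalities have the same shape as (\ref{2.2}): a product of $f$-values plus a weighted product of $g$-values, which we want to bound above by the value at the ``collapsed'' single index. I would first verify the base cases directly. For part (i), the base case is $i=1$ (so $t=1$, $k_1=1$), where both sides equal $\tfrac{1}{3}(l+1)f(1)+g(1)=\tfrac{1}{3}(l+1)\cdot 3 + 1$, and the hypothesis $i=(m-l-1)/3$ forces equality as claimed. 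For part (ii), the constraint $i\ge 2$ means the smallest relevant $t$ is $2$ with $k_1=k_2=1$; I would check (\ref{2.4}) by hand there.

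For the inductive step I would follow the reduction used in Lemma~\ref{lem2.8}: assuming $i<t$ (otherwise all $k_j=1$ and a direct computation settles it), pick some $k_1\ge 2$, set $k_1'=k_1-1$, and use the algebraic identity $f(k_1'+1)=3f(k_1')-(2k_1'+1)$ together with $g(k_1'+1)=3g(k_1')$. Substituting these into the left-hand side of (\ref{2.5}) or (\ref{2.4}) splits each expression into three times the corresponding expression at $t-1$, minus a correction term proportional to $(2k_1'+1)$. Applying the induction hypothesis to the ``$3\times$'' part and then absorbing the linear-in-$t$ slack against the correction term via inequality (\ref{2.3}) should close the argument, just as the final line of Lemma~\ref{lem2.8}'s proof does. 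The equality analysis then tracks back: equality propagates only when every $k_j=1$, i.e.\ $i=t$, which is the stated condition.

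I expect the main obstacle to be the bookkeeping in part (ii), which is genuinely more delicate than (i) or than Lemma~\ref{lem2.8}. The extra summation term $\sum_{r}\prod_{j\neq r}g(k_j)$ does not transform as cleanly under the $k_1\mapsto k_1'+1$ substitution: one must separate the $r=1$ term (where the factor $g(k_1)$ is replaced, contributing $\prod_{j\ge 2}g(k_j)$) from the $r\ge 2$ terms (where a stray $g(k_1'+1)=3g(k_1')$ factor appears and must be recombined). Verifying that after this split the whole expression is still bounded by $3$ times the $(t-1)$-value minus a manageable correction requires care, and the constant $\tfrac{1}{3}(2l-1)$ weighting the $g$-product interacts with both the main induction and the residual sum. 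The safest route, which the authors evidently take, is to defer the full computation to Appendix~A and here only record that the reduction and the master inequality (\ref{2.3}) suffice; I would first derive a clean closed form for how the left side of (\ref{2.4}) decomposes at $t$ in terms of its value at $t-1$ before attempting the final estimate, so that the role of the additive $\tfrac{1}{3}(m+l-1)$ term on the right (which also shifts by $\tfrac{2}{3}$ as $m\to m-3$) is transparent.
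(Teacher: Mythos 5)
Your plan coincides with the paper's own proof in Appendix A: induction on $t=\lfloor(m-l)/3\rfloor$, direct computation when $i=t$ (all $k_j=1$, giving the equality case), and otherwise the reduction $k_1\mapsto k_1'=k_1-1$ via $f(k_1'+1)=3f(k_1')-(2k_1'+1)$ and $g(k_1'+1)=3g(k_1')$, with the $r=1$ and $r\ge2$ terms of the sum in (ii) split exactly as you describe and the correction absorbed by (\ref{2.3}). The only slip is cosmetic: the additive term $\tfrac13(m+l-1)$ decreases by $1$ (not $\tfrac23$) when $m\mapsto m-3$, which is precisely why the factor $3$ in front of the induction hypothesis leaves the slack $2(l-2)+3t-\cdots$ that (\ref{2.3}) must defeat.
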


\begin{lemma}\label{lem2.10}
   Let $m,l,k_1,\ldots,k_i$ be positive integers such that  $l\geq4$, $m\ge l+3$ and $$k_1+\cdots+k_i=\left\lfloor\frac{m-l}{3}\right\rfloor.$$

\item[(i)] If   $m-l\equiv1$ $(mod\text{ }3)$, then
\begin{align}\label{2.7}
    &\frac{1}{18}(l+2)(l+5)\prod\limits_{j=1}^if(k_j)+\frac{1}{3}(l-1)\prod\limits_{j=1}^ig(k_j)\notag\\
    &\leq\frac{1}{18}(l+2)(l+5) \cdot g\left(\frac{1}{3}(m-l-1)+1\right)+\frac{1}{3}(l-1),
\end{align}with equality if and only if $i=(m-l-1)/3$.

\item[(ii)] If  $m-l\equiv0$ $(mod\text{ }3)$  and $i\ge 2$, then
\begin{align}\label{2.6}
	&\frac{1}{18}(l+2)(l+5)\prod\limits_{j=1}^if(k_j)+\frac{1}{9}(l-1)(l+2)\prod\limits_{j=1}^ig(k_j)+\frac{1}{3}(l-1)\left[\prod\limits_{j=1}^ig(k_j)+\sum\limits_{r=1}^i\prod\limits_{j\neq r}g(k_j)\right]\notag\\
	&\le \frac{1}{18}(l+2)(l+5)\cdot g\left(\frac{1}{3}(m-l)+1\right)+\frac{1}{9}(m+5)(l-1),
\end{align}with equality if and only if $i= (m-l)/3.$
\end{lemma}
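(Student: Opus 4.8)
The plan is to prove both inequalities by induction on $t=\lfloor (m-l)/3\rfloor$ with $l$ held fixed, mirroring the proofs of Lemmas \ref{lem2.8} and \ref{lem2.9}. The entire argument runs on the two elementary recurrences $f(x+1)=3f(x)-(2x+1)$ and $g(x+1)=3g(x)$, immediate from $f(x)=3^{x-1}+x+1$ and $g(x)=3^{x-1}$. Whenever some $k_j\ge 2$ I will relabel so that $k_1\ge 2$, set $k_1'=k_1-1$, and write $F'$, $G'$, $S'$ for the quantities attached to the reduced tuple $(k_1',k_2,\dots,k_i)$ (whose entries sum to $t-1$), abbreviating $Q=\prod_{j\ge 2}f(k_j)$ and $P=\prod_{j\ge 2}g(k_j)$.

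For part (i), I would set $A=\tfrac{1}{18}(l+2)(l+5)$ and $B=\tfrac13(l-1)$, so the left-hand side is $A\prod_j f(k_j)+B\prod_j g(k_j)$. The base case $t=1$ forces $i=1$, $k_1=1$, giving equality, and if every $k_j=1$ then $i=t$ and $\prod f=3^t$, $\prod g=1$ again give equality. Otherwise the recurrences yield $\prod_j f(k_j)=3F'-(2k_1'+1)Q$ and $\prod_j g(k_j)=3G'$, so the left-hand side equals $3(AF'+BG')-A(2k_1'+1)Q$; bounding $AF'+BG'\le A\cdot 3^{t-1}+B$ by the induction hypothesis reduces everything to the residual inequality $A(2k_1'+1)Q\ge 2B$, which holds strictly since $2k_1'+1\ge 3$, $Q\ge 1$, and $(l+2)(l+5)>4(l-1)$ for $l\ge 4$.

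Part (ii) I would handle the same way, the one new ingredient being the mixed sum $S=\sum_r\prod_{j\ne r}g(k_j)$. Isolating the $r=1$ summand and using $g(k_1)=3g(k_1')$ gives $S=3S'-2P$. With $A$ as above, $C=\tfrac19(l-1)(l+2)$ and $D=\tfrac13(l-1)$, the left-hand side $A\prod f+C\prod g+D\bigl(\prod g+S\bigr)$ collapses to $3(\text{reduced left-hand side})-A(2k_1'+1)Q-2DP$. Feeding in the induction hypothesis and $m=l+3t$, the target becomes
\[
A(2k_1'+1)Q+2DP\ \ge\ \frac{(l-1)(2m+1)}{9}.
\]
The reduction is only invoked when $t\ge 3$, because for $t=2$ the hypothesis $i\ge 2$ forces $k_1=k_2=1$, which is the equality (all-ones) case; in the regime $t\ge 3$ inequality \eqref{2.3} gives $(2k_1'+1)Q\ge 2t-1$, turning the display into a polynomial inequality in $l$ and $t$ that stays positive for all $l\ge 4$, $t\ge 3$. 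The all-ones case $i=t$ (which contains the base $t=2$) is checked directly, the matching of constants resting on the identity $C+D=\tfrac19(l-1)(l+5)$.

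I expect the main obstacle to be the bookkeeping of the symmetric sum $S$ in part (ii): one must correctly track how each of the three term-types scales under lowering a single $k_1$ — producing in particular the correction $-2P$ — and then confirm that the assembled residual polynomial inequality is positive over the full admissible range, which is where \eqref{2.3} does the heavy lifting. Since every residual estimate is strict whenever some $k_j\ge 2$, the induction also delivers the stated equality characterization, namely $i=(m-l-1)/3$ in (i) and $i=(m-l)/3$ in (ii).
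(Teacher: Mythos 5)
Your proposal is correct and follows essentially the same route as the paper's proof in Appendix A: induction on $t=\lfloor(m-l)/3\rfloor$ via the recurrences $f(x+1)=3f(x)-(2x+1)$ and $g(x+1)=3g(x)$, the reduction $k_1\mapsto k_1-1$, the bookkeeping identity $S=3S'-2P$ for the symmetric sum, and inequality (\ref{2.3}) to control the residual term. Your reorganization of the final estimate into the single display $A(2k_1'+1)Q+2DP\ge\frac{(l-1)(2m+1)}{9}$ is an equivalent repackaging of the paper's last chain of inequalities, and the equality analysis matches.
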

\begin{lemma}\label{le2.15}
	 Let  $s_1, s_2\geq3$ be  integers.
	 \item (i) If $s_1\equiv1$ (mod 3), $s_2\equiv2$ (mod 3), then
	\begin{equation*}
		\begin{cases}
			\text{$(3^{(s_1-1)/3-1}+1)\cdot3^{(s_2-2)/3-1}<3^{(s_1+s_2)/3-1}+\frac{1}{3}(s_1+s_2)+1$,}\\
			\text{$3^{(s_1-1)/3-1}\cdot3^{(s_2-2)/3-1}<3^{(s_1+s_2)/3-1}$.}
		\end{cases}
	\end{equation*}
	\item (ii) If $s_1$, $s_2\equiv2$ (mod 3), then
	\begin{equation*}
		\begin{cases}
			\text{$3^{(s_1-2)/3-1}\cdot3^{(s_2-2)/3-1}<3^{(s_1+s_2-1)/3-1}+1$,}\\
			\text{$3^{(s_1-2)/3-1}\cdot3^{(s_2-2)/3-1}<3^{(s_1+s_2-1)/3-1}$.}
		\end{cases}
	\end{equation*}
	\item (iii) If $s_1\equiv1$ (mod 3), $s_2\equiv1$ (mod 3) and $s_1$, $s_2$ are not both equal to 4, then
	\begin{equation*}
		\begin{cases}
			\text{$(3^{(s_1-1)/3-1}+1)\cdot(3^{(s_2-1)/3-1}+1)<3^{(s_1+s_2-2)/3-1}$,}\\
			\text{$3^{(s_1-1)/3-1}\cdot3^{(s_2-1)/3-1}<3^{(s_1+s_2-2)/3-1}$.}
		\end{cases}
	\end{equation*}
	\item (iiii) If $s_1=s_2=s_3=4$, then$$\prod\limits_{i=1}^33^{(s_i-1)/3-1}<\prod\limits_{i=1}^3(3^{(s_i-1)/3-1}+1)<3^{(s_i+s_2+s_3)/3-1}<3^{(s_i+s_2+s_3)/3-1}+\frac{1}{3}(s_i+s_2+s_3)+1.$$
\end{lemma}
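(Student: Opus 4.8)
The plan is to remove the divisibility constraints by writing each $s_i$ in terms of an integer exponent, after which every quantity becomes a power of $3$. In part (i) set $s_1=3a+1$ and $s_2=3b+2$; the hypotheses $s_1,s_2\ge3$ force $a,b\ge1$, and since $s_1+s_2=3(a+b+1)$ the two left factors are $3^{a-1}+1$ and $3^{b-1}$ while the right-hand leading term is $3^{(s_1+s_2)/3-1}=3^{a+b}$. Doing the analogous substitution in part (ii) ($s_i=3a+2$) and part (iii) ($s_i=3a+1$), the whole lemma reduces to comparing a product of powers of $3$, plus bounded additive corrections, against a single larger power of $3$. The decisive quantity is the \emph{slack}, the ratio of the right-hand leading power to the left-hand product power: this ratio equals $9$ in parts (i) and (ii) but only $3$ in part (iii), and that gap is exactly what forces the case split and the exclusion of $s_1=s_2=4$.

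For parts (i) and (ii) the slack of $9$ settles everything. After substitution the product power is $3^{a+b-2}$ while the right-hand leading power is $3^{a+b}$, so the four pure-product inequalities reduce to $3^{a+b-2}<3^{a+b}$ and its $+1$ variant, which are immediate. The only inequality carrying an extra additive term is the first one in part (i), where the left side is $3^{a+b-2}+3^{b-1}$; since $a\ge1$ gives $3^{b-1}\le3^{a+b-2}$, the left side is at most $2\cdot3^{a+b-2}<3^{a+b}$, and the linear and constant terms on the right only widen the gap.

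The delicate part is (iii), where the slack is merely a factor of $3$. With $s_1=3a+1$, $s_2=3b+1$ we have $(s_1+s_2-2)/3-1=a+b-1$, so the substantive inequality is $(3^{a-1}+1)(3^{b-1}+1)<3\cdot3^{a+b-2}$; expanding, this is equivalent to $3^{a-1}+3^{b-1}+1<2\cdot3^{a+b-2}$. I would split on the smallest exponents: when $a,b\ge2$ the right side dominates term by term; when exactly one of $a,b$ equals $1$, say $a=1$, the inequality becomes $2+3^{b-1}<2\cdot3^{b-1}$, which holds because $3^{b-1}\ge3$ for $b\ge2$; and when $a=b=1$ it reads $4<3$, which fails. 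This last case is precisely $s_1=s_2=4$, explaining the exclusion, whereas the pure-power version $3^{a+b-2}<3^{a+b-1}$ is unconditional. Finally, part (iiii) is the three-fold analogue of the excluded configuration: with $s_1=s_2=s_3=4$ all exponents equal $1$, the merged order $12\equiv0\pmod3$ restores exponent $3$, and a direct evaluation yields the chain $1<8<27<32$, so the factor-$9$ slack reappears once three small pieces are merged. The only real obstacle is the bookkeeping of the boundary exponents in part (iii); all other inequalities are forced by the slack estimate.
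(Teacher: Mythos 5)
Your proposal is correct, and it is the same direct-computation route the paper takes: the paper's proof of this lemma is the single line ``A straightforward computation can verify the above inequalities,'' and your substitutions $s_i=3a+1$, $3b+2$, etc., together with the case analysis at $a=b=1$ in part (iii), simply carry out that computation in full. All the reductions and boundary checks (including the failing case $4<3$ that explains the exclusion of $s_1=s_2=4$) are verified correctly.
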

\begin{proof}
A straightforward computation can verify the above inequalities.
\end{proof}
\section{Proofs of the main results}
In this section, firstly we determine the maximum value of $m(G)$ with $G\in \mathcal{P}(n,k)$ such that all component of $G-C_k$ have fixed orders. Then we present the proofs of the main results.

\subsection{The case that all components of $G-C_k$ have fixed orders}

Firstly we deal with the case that each component of $G-C_k$ has order at leat 3.
\begin{theorem}\label{cl3.4}
	Let $n,k,n_1,\ldots,n_r$ $(r\ge1)$ be positive integers larger than 2 such that $n=k+n_1+\cdots+n_r$. Suppose $G\in\mathcal{P}(n,k)$  such that $G-C_k$ consists of $r$ components $T_1,\ldots,T_r$ with $|V(T_i)|=n_i$ for $i=1,\ldots,r$. Then $m(G)$ attains the maximum if and only if
\begin{equation*}\label{eqh041}
G=\bigcup_{i=1}^rT_i\cup C_k\cup \{uv_1,\ldots,uv_r\}
\end{equation*}
such that $u\in V(C_k)$  and $T_i\in\bigcup_{j=1}^5\mathcal{T}_j(v_i)$, with an additional case that $v_i$ is an arbitrary vertex of $T_i=P_3$ when $k\equiv 0~(mod~ 3)$ and $n_i=3$,    $i= 1,\ldots,r$.
\end{theorem}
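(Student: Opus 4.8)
The plan is to reduce the problem to a simultaneous maximisation of three monotone statistics of each rooted branch. Because $G$ is potted, all $r$ branches hang from the single vertex $u\in V(C_k)$ of degree larger than $2$, so Lemma \ref{CLAIM3.3} applies verbatim; writing $a_i=m(T_i)$, $b_i=\overline{m}(T_i-v_i)=m(T_i,v_i^-)$ and $c_i=\overline{m}(T_i-N(v_i))=m(T_i,v_i^0)$ it gives
\[
m(G)=m(C_k,u^-)\prod_{i=1}^r a_i+\bigl(m(C_k,u^0)+m(C_k,u^1)\bigr)\prod_{i=1}^r b_i+m(C_k,u^0)\sum_{j=1}^r c_j\prod_{i\neq j}b_i .
\]
By Corollary \ref{CLAIM3.2} the three cycle coefficients are fixed nonnegative numbers depending only on $k\bmod 3$. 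Freezing every branch but $T_1$, this reads $m(G)=Aa_1+Bb_1+Cc_1$ with $A=m(C_k,u^-)\prod_{i\ge2}a_i$, $C=m(C_k,u^0)\prod_{i\ge2}b_i$ and $B\ge C\ge0$, both being immediate from the nonnegativity of the cycle coefficients.

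The key step is an Abel rewriting. Since $a_1=m(T_1,v_1^-)+m(T_1,v_1^0)+m(T_1,v_1^1)=b_1+c_1+m(T_1,v_1^1)$, setting $\mu_1=A+B\ge\mu_2=A+C\ge\mu_3=A\ge0$ gives
\[
m(G)=(\mu_1-\mu_2)\,m(T_1,v_1^-)+(\mu_2-\mu_3)\bigl(m(T_1,v_1^-)+m(T_1,v_1^0)\bigr)+\mu_3\,m(T_1).
\]
Thus $m(G)$ is a nonnegative combination of the partial sums $S_1=m(T_1,v_1^-)$, $S_2=m(T_1,v_1^-)+m(T_1,v_1^0)$ and $S_3=m(T_1)$, so it is maximised by any rooted branch maximising $S_1,S_2,S_3$ at once. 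I would therefore prove that, among all rooted trees of fixed order $n_1$, the pairs with $T_1\in\bigcup_{j=1}^5\mathcal{T}_j(v_1)$ maximise $S_1,S_2,S_3$ simultaneously: $S_3$ is Lemma \ref{lem2.3}; $S_1$ is Lemma \ref{CL3.3}, supplemented for trees outside the listed families by the forest estimate $\overline{m}(T_1-v_1)\le 3^{\lfloor(n_1-1)/3\rfloor}$ of Lemma \ref{lem2.4}; and $S_2$ needs an analogous sharp bound, which I would obtain by the same vertex-deletion recursion that underlies Lemma \ref{CL3.3}.

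Given the single-branch optimisation, a coordinate-wise exchange finishes the argument: replacing any branch not of the prescribed form by a member of $\bigcup_{j=1}^5\mathcal{T}_j(w)$ of the same order never decreases $m(G)$, so the all-optimal configuration is a global maximiser, and the exchange is strict whenever the coefficient of the partial sum that fails to be maximal is positive. As $\mu_3=A=m(C_k,u^-)\prod_{i\ge2}a_i>0$ always, $S_3=m(T_1)$ must be maximal, forcing each $T_i\in\bigcup_{j=1}^5\mathcal{T}_j$; as $\mu_1-\mu_2=m(C_k,u^1)\prod_{i\ge2}b_i+m(C_k,u^0)\sum_{j\ge2}c_j\prod_{2\le i\neq j}b_i>0$ always, $S_1$ must be maximal, forcing $v_i=w$ through the equality clause of Lemma \ref{CL3.3}. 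The stated exception falls out of the same identity: when $k\equiv0$ one has $m(C_k,u^0)=0$, hence $\mu_2=\mu_3$ and $S_2$ drops out; for $T_i=P_3$ both surviving statistics $m(P_3,v^-)=1$ and $m(P_3)=3$ are independent of the chosen vertex $v$, so $v_i$ is free, whereas for $k\not\equiv0$ the positive weight $\mu_2-\mu_3$ on $S_2$ (which does depend on the root of $P_3$) pins $v_i$ down to $w$.

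The main obstacle is the tree-structural input flagged above. Lemma \ref{CL3.3} only controls $\overline{m}(T-v)$ inside the families $\mathcal{H},\mathcal{T}_2,\ldots,\mathcal{T}_5$, so for residues $n_1\equiv1,2\pmod 3$ I must discard the crude forest value $3^{\lfloor(n_1-1)/3\rfloor}$, which is a factor $3$ too large, by using that $\overline{m}(T_1-v_1)\neq0$ forces $\mathrm{diss}(T_1-v_1)=\mathrm{diss}(T_1)$; and I must prove the matching sharp bound for the new statistic $S_2$ together with its equality case. These local computations are the delicate part, whereas the way the per-branch maxima combine is made automatic by the telescoping identity and the ordering $\mu_1\ge\mu_2\ge\mu_3\ge0$.
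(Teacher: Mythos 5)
Your decomposition via Lemma \ref{CLAIM3.3} and the Abel rewriting are correct, and the resulting dominance principle (maximise the partial sums $S_1\le S_2\le S_3$ against weights $\mu_1\ge\mu_2\ge\mu_3\ge 0$) is essentially a repackaging of what the paper does: it too freezes all branches but one and compares $m(G)$ with the graph $G_x^*$ obtained by substituting an optimal rooted tree, the comparison being governed by exactly the three statistics $m(T_x)$, $\overline{m}(T_x-v_x)$, $\overline{m}(T_x-N(v_x))$ read off from Tables \ref{table:1}--\ref{EQUg}. The sharp bound you defer for $S_2=m(T,v^-)+m(T,v^0)$, with its equality cases, is genuinely needed and is not stated as a lemma anywhere in the paper, but it is recoverable from Lemma \ref{lem2.3}, Corollary \ref{cor3.5}, Lemma \ref{lem2.4} and the appendix tables (e.g.\ for $n_1\equiv 0$ the maximum is $3^{n_1/3-1}+1$, attained among others by $\mathcal{T}_1(w)$ rooted at $w$), so deferring it is a matter of bookkeeping rather than a flaw.

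The genuine gap is the assertion that $\mu_1-\mu_2=m(C_k,u^1)\prod_{i\ge2}b_i+m(C_k,u^0)\sum_{j\ge2}c_j\prod_{2\le i\ne j}b_i>0$ ``always''. This is false: take $r=2$ and root $T_2\in\mathcal{T}_3(w)$ at the vertex $q_1$ of Figure \ref{fig:enter-label}(e); Table \ref{EQU} gives $b_2=\overline{m}(T_2-q_1)=0$ and $c_2=\overline{m}(T_2-N(q_1))=0$, so $\mu_1=\mu_2=\mu_3$ and your identity then exerts no control on $S_1$ or $S_2$ of the first branch. Positivity of $\mu_1-\mu_2$ depends on the configuration of the \emph{other} branches, which is exactly why the paper opens its proof with the claim that $m(G,u^1)\ne 0$ for any maximiser (its equation (\ref{claim.})), proved by showing that $m(G,u^1)=0$ forces $m(G,u^0)=0$ and then exhibiting a strictly better graph. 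Once that claim is in place your argument closes: since $m(G,u^1)=b_1(\mu_1-\mu_2)+c_1(\mu_2-\mu_3)$ and $\mu_2>\mu_3$ already forces $\mu_1>\mu_2$ (as $m(C_k,u^1)>0$ for all $k\ge 3$), a maximiser has $\mu_1>\mu_2$, and the equality analysis pins $v_i$ to the branch vertex via Lemma \ref{CL3.3}. Without this preliminary step the ``only if'' direction does not follow from your exchange as written.
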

\begin{proof}

Let $G=\bigcup_{i=1}^rT_i\cup C_k\cup \{uv_1,\ldots,uv_r\}$ with $u\in V(C_k)$ and $v_i\in V(T_i)$ for $i=1,\ldots, r$.
Suppose $m(G)$ attains the maximum.

For $x\in\{1,\ldots,r\}$, denote by
\begin{equation}\label{eqh0410}
G_x^*=(G-T_x)\cup T_x^*\cup\{uv^*_x\},
 \end{equation}
 where $T_x^*\in\bigcup_{i=1}^5\mathcal{T}_i(v_x^*)$ with $|V(T_x^*)|=n_x$.  A direct computation shows
\begin{equation}\label{eq0403}
\overline{m}(T_x^*-v_x^*)=3^{\lfloor n_x/3\rfloor-1};
\end{equation}
\begin{equation}\label{eq0404}
m(T_x^*)\equiv \left\{\begin{array}{ll}
   3^{n_x/3-1}+ {n_x}/{3}+1,& \text{  if  }n_x\equiv 0 \text{ (mod 3)};\\
3^{(n_x-1)/3-1}+1,& \text{  if  }n_x\equiv 1 \text{ (mod 3)};\\
3^{(n_x-2)/3-1},& \text{  if  }n_x\equiv 2 \text{ (mod 3)}
 \end{array}\right.
 \end{equation}
 and
\begin{equation}\label{eq0405}
\m(T_x^*-N(v_x^*))\equiv \left\{\begin{array}{ll}
   1,& \text{  if  }n_x\equiv 0 \text{ (mod 3)};\\
0,& \text{  otherwise.  }
 \end{array}\right.
 \end{equation}

 Firstly we  claim
	\begin{equation}\label{claim.}
		m(G,u^1)\neq 0.
	\end{equation}
Otherwise, suppose $m(G,u^1)=0$.  By Lemma \ref{CLAIM3.3}, we have
$$m(G,u^1)=m(C_k,u^1)\prod\limits_{i=1}^{r}\overline{m}(T_i-v_i)+m(C_k,u^0)\sum\limits_{j=1}^{r}\left[\overline{m}(T_j-N(v_j))\prod\limits_{i\neq j}\overline{m}(T_i-v_i)\right].$$
Since $m(C_k,u^1)\ne 0$, there exists $x\in\{1,\ldots,r\}$ such that $\overline{m}(T_x-v_x)=0$, which leads to $$m(G,u^0)= m(C_k,u^0)\prod\limits_{i=1}^{r}\overline{m}(T_i-v_i)=0.$$ Thus, applying Lemma \ref{lem2.3} and Lemma \ref{CLAIM3.3}, we have
	\begin{align*}
		m(G)=m(G,u^-)=m(C_k,u^-)\prod\limits_{i=1}^rm(T_i)\le m(C_k,u^-)\prod\limits_{i=1}^rm(T_i^*).
	\end{align*}
	Let $G^{'}=\bigcup_{i=1}^rT_x^*\cup C_k\cup\{uv_1^*,\ldots,uv_r^*\}$. Then by  Lemma \ref{CLAIM3.3}, we have
	\begin{align*}
		m(G^{'})&=m(G^{'},u^-)+m(G^{'},u^0)+m(G^{'},u^1)\\
		&=m(C_k,u^-)\prod\limits_{i=1}^rm(T_i^*)+m(G^{'},u^0)+m(G^{'},u^1).
	\end{align*}
By Lemma \ref{CL3.3} and Lemma \ref{CLAIM3.3},  we have $m(G^{'},u^1)\neq0$. Thus $m(G^{'})>m(G)$, which contradicts the fact that $m(G)$ attains the maximum.  Therefore, we have (\ref{claim.}).
	
For any  $x\in\{1,\ldots r\}$, we distinguish two cases.
	
	{\it Case 1.} $k\equiv0$ (mod 3).	By Lemma \ref{CLAIM3.3} and Corollary \ref{CLAIM3.2}, we have
	\begin{align}
		m(G)&=m(C_k,u^-)\prod_{i=1}^rm(T_i)+m(C_k,u^1)\prod_{i=1}^r\overline{m}(T_i-v_i)\notag \\
&=m(T_x)\cdot m(C_k,u^-)\prod_{i\ne x}m(T_i)+\m(T_x-v_x)\cdot m(C_k,u^1)\prod_{i\ne x}\overline{m}(T_i-v_i)\label{Equation09}
	\end{align}
	and
\begin{align}
		m(G_x^*)=m(T_x^*)\cdot m(C_k,u^-)\prod_{i\ne x}m(T_i)+\m(T^*_x-v_x^*)\cdot m(C_k,u^1)\prod_{i\ne x}\overline{m}(T_i-v_i)\label{eq042}
	\end{align}
Moreover, by (\ref{claim.}) we have $\prod_{i\ne x}\overline{m}(T_i-v_i)\ne 0.$

If $n_x=3$, then $T_x=P_3$, $m(T_x)=3$ and $\m(T_x-v_x)=1$ for all $v_x\in V(T_x)$. It follows that \eqref{Equation09} has the same value no matter how we choose $v_x$. Therefore, we get $T_x=P_3$ with $v_x$ being an arbitrary vertex of $T_x$. Next we suppose $n_x\ge 4$.

	{\it Subcase 1.1.}  $n_x\equiv 0$ (mod 3).
If $T_x\notin\mathcal{T}_1$,
	  by Lemma \ref{lem2.3} and Lemma \ref{lem2.4}, we have $$m(T_x)\le 3^{n_x/3-1}+\frac{n_x}{3}~~\text{and}~~\overline{m}(T_x-v_x)\le 3^{n_x/3-1}.$$
If $T_x\in\mathcal{T}_1(w)$ with $ w\neq v_x$, by a direct computation  and Lemma \ref{CL3.3}, we have
 $$m(T_x)=3^{n_x/3-1}+\frac{n_x}{3}+1~~\text{and}~~\overline{m}(T_x-v_x)< 3^{n_x/3-1}.$$
In both cases we can combine (\ref{eq0403}), (\ref{eq0404}), (\ref{Equation09}) and (\ref{eq042}) to deduce $m(G)<m(G^*_x)$, a contradiction.
 Therefore, we have $ T_x\in\mathcal{T}_1(w) $ with $w=v_x$.

	{\it Subcase 1.2.}  $n_x\equiv 1$ (mod 3).
If $T_{x}\notin\mathcal{T}_2$, then by Lemma \ref{lem2.3}, we have
$$\m(T_x-v_x)\le m(T_{x})\leq3^{(n_x-1)/3-1}.$$
If  $T_x\in\mathcal{T}_2(w)$ with $ w\neq v_x$,  then by a direct computation  and Lemma \ref{CL3.3}, we get $$m(T_x)=3^{(n_x-1)/3-1}+1~~\text{and}~~\overline{m}(T_x-v_x)<3^{(n_x-1)/3-1}.$$
Again, in both cases we can combine (\ref{eq0403}), (\ref{eq0404}),  (\ref{Equation09}) and (\ref{eq042}) to deduce $m(G)<m(G^*_x)$, a contradiction.
 Therefore, we have $ T_x\in\mathcal{T}_2(w) $ with $w=v_x$.
	
	{\it Subcase 1.3.}  $n_x\equiv 2$ (mod 3).  If $T_{x}\notin\mathcal{T}_3\cup\mathcal{T}_4\cup\mathcal{T}_5$, then by Lemma \ref{lem2.3}, we have $$\m(T_x-v_x)\le m(T_{x})<3^{(n_x-2)/3-1}.$$
If $T_{x}\in\mathcal{T}_3(w)\cup\mathcal{T}_4(w)\cup\mathcal{T}_5(w)$ with $ w\neq v_x$,  then by a direct computation and Lemma \ref{CL3.3}, $$m(T_x)=3^{(n_x-2)/3-1}~~\text{and}~~\overline{m}(T_x-v_x)<3^{(n_x-2)/3-1}.$$
Again, in both cases we can combine (\ref{eq0403}), (\ref{eq0404}),  (\ref{Equation09}) and (\ref{eq042}) to deduce $m(G)<m(G^*_x)$, a contradiction.
 Therefore, we have
$T_{x}\in\mathcal{T}_3(w)\cup\mathcal{T}_4(w)\cup\mathcal{T}_5(w)$ with $w =v_x$.
	
	{\it Case 2.} $k\not\equiv0$ (mod 3). By Lemma \ref{CLAIM3.3}, we have
	\begin{align}
		m(G^*_x)&= m(T_x^*)\cdot m(C_k,u^-)\prod\limits_{i\neq x}m(T_i)+\m(T_x^*-v_x^*)\cdot m(C_k,u^1)\prod\limits_{i\neq x}\overline{m}(T_i-v_i)\notag\\
		&\quad+\m(T_x^*-v_x^*)\cdot m(C_k,u^0)\left[\prod\limits_{i\neq x}\overline{m}(T_i-v_i)+\sum\limits_{j\neq x}\Big(\overline{m}(T_j-N(v_j))\prod\limits_{i\neq j,x}\overline{m}(T_i-v_i)\Big)\right]\notag\\
		&\quad+\m(T_x^*-N(v_x^*))\cdot m(C_k,u^0)\prod\limits_{i\neq x}\overline{m}(T_i-v_i)\label{eq0407}
	\end{align}
and
\begin{align}
		m(G)&= m(T_x)\cdot m(C_k,u^-)\prod\limits_{i\neq x}m(T_i)+\m(T_x-v_x)\cdot m(C_k,u^1)\prod\limits_{i\neq x}\overline{m}(T_i-v_i)\notag\\
		&\quad+\m(T_x-v_x)\cdot m(C_k,u^0)\left[\prod\limits_{i\neq x}\overline{m}(T_i-v_i)+\sum\limits_{j\neq x}\Big(\overline{m}(T_j-N(v_j))\prod\limits_{i\neq j,x}\overline{m}(T_i-v_i)\Big)\right]\notag\\
		&\quad+\m(T_x-N(v_x))\cdot m(C_k,u^0)\prod\limits_{i\neq x}\overline{m}(T_i-v_i).\label{eq0408}
	\end{align}
We distinguish three subcases.

	{\it Subcase 2.1.} $n_x\equiv0$ (mod 3).  Suppose $T_{x}\notin\mathcal{H}$. By Corollary \ref{cor3.5} and Lemma \ref{lem2.4}, we have
\begin{equation}\label{eq0406}
m(T_{x})\leq3^{n_x/3-1}+1\quad{\rm and}\quad\overline{m}(T_{x}-v_x)\leq3^{n_x/3-1}.
\end{equation}
Applying  Corollary \ref{co2.9}, we have
\begin{eqnarray*}
MD(G)&\subseteq& MD(T_x)\oplus MD(G-T_x)\\
&=&MD(T_x)\oplus \left[MD(G-T_x,u^-) \cup  MD(G-T_x,u^0) \cup  MD(G-T_x,u^1)\right]\\
&=&\left[MD(T_x)\oplus \left(MD(G-T_x,u^-) \cup  MD(G-T_x,u^0)\right)\right] \cup  \left[MD(T_x)\oplus MD(G-T_x,u^1)\right].
\end{eqnarray*}

Suppose $v_x$ is in a maximum dissociation set of $T_x$, say,  $v_x\in F_1\in MD(T_x)$. Then for all $F_2\in  m(G-T_x,u^1)$, we have $F_1\cup F_2\not\in MD(G)$. Therefore, by (\ref{eq0403}),(\ref{eq0404}), (\ref{eq0405}), (\ref{eq0406}) and Lemma \ref{CLAIM3.3}, we have
	\begin{align*}
		m(G)&\le m(T_x) m(G-T_x,u^-)+m(T_x) m(G-T_x,u^0)+[m(T_x)-1] m(G-T_x,u^1)\\
		&\le (3^{n_x/3-1}+1)m(C_k,u^-)\prod\limits_{i\neq x}m(T_i)+(3^{n_x/3-1}+1)m(C_k,u^0)\prod\limits_{i\neq x}\overline{m}(T_i-v_i)\\
		&~~~+3^{n_x/3-1} \left[m(C_k,u^1)\prod\limits_{i\neq x}\overline{m}(T_i-v_i)+
		m(C_k,u^0)\sum\limits_{j\neq x}\Big(\overline{m}(T_j-N(v_j))\prod\limits_{i\neq j,x}\overline{m}(T_i-v_i)\Big)\right]\\
		& <m(G^*_x),
	\end{align*}a contradiction.

 If $v_x$ is not in any  maximum dissociation sets of $T_x$, by Lemma \ref{lem2.4}, we have  $$m(T_x)=m(T_x-v_x)\leq3^{n_x/3-1}  $$
 and $$\diss(G)=\diss(G-T_x)+\diss(T_x).$$
 By (\ref{eq0403}),(\ref{eq0404}), (\ref{eq0405}), Corollary \ref{co2.9}   and  Lemma \ref{CLAIM3.3}, we have
	\begin{align}
		m(G)&\le m(T_x)m(G-T_x)\notag\\
		&\le  3^{n_x/3-1} m(C_k,u^-)\prod\limits_{i\neq x}m(T_i)+3^{n_x/3-1} m(C_k,u^1)\prod\limits_{i\neq x}\overline{m}(T_i-v_i)\notag\\
		&\quad+3^{n_x/3-1} m(C_k,u^0)\left[\prod\limits_{i\neq x}\overline{m}(T_i-v_i)+\sum\limits_{j\neq x}\Big(\overline{m}(T_j-N(v_j))\prod\limits_{i\neq j,x}\overline{m}(T_i-v_i)\Big)\right]\notag\\
		&<m(G^*_x),\notag
	\end{align}a contradiction.  Therefore, we have $T_x\in\mathcal{H}.$
	
  By computing $m(T_x)$, $\m(T_x-v_x)$, $\m(T_x-N(v_x))$ and using (\ref{eq0403}), (\ref{eq0404}), (\ref{eq0405}) and (\ref{claim.}), we can compare (\ref{eq0407}) and (\ref{eq0408}) case by case to deduce $m(G)<m(G_x^*)$ if $T_x\notin\mathcal{T}_1$ or $T_x\in\mathcal{T}_1(w)$ with $w\ne v_x$. We list all the possibilities of $m(T_x)$, $\m(T_x-v_x)$ and $\m(T_x-N(v_x))$ when $T_x\in \mathcal{H}$ with $v_x\in V(T_x)$ in the Tables 1-3 in Appendix B  and omit the details here.

	{\it Subcase 2.2.} $n_x\equiv1$ (mod 3).
   Suppose $T_x\notin\mathcal{T}_2$. By Lemma \ref{lem2.3}, we get $m(T_x)\le 3^{(n_x-1)/3-1}$. Applying Corollary \ref{co2.9} and Lemma \ref{CLAIM3.3} on $G-T_x$, we have
	\begin{align}
		m(G)&\le m(T_x)\cdot m(G-T_x) \notag\\
		&\leq 3^{(n_x-1)/3-1}m(C_k,u^-)\prod\limits_{i\neq x}m(T_i)+3^{(n_x-1)/3-1} m(C_k,u^1)\prod\limits_{i\neq x}\overline{m}(T_i-v_i)\notag\\
		&\quad+3^{(n_x-1)/3-1} m(C_k,u^0)\sum\limits_{j\neq x}\Big(\overline{m}(T_j-N(v_j))\prod\limits_{i\neq j,x}\overline{m}(T_i-v_i)\Big)\notag\\
		&\quad+3^{(n_x-1)/3-1}m(C_k,u^0)\prod\limits_{i\neq x}\overline{m}(T_i-v_i)<m(G^*_x),\notag
	\end{align}a contradiction. Therefore, $T_x\in\mathcal{T}_2$.

Again, by computing $m(T_x)$, $\m(T_x-v_x)$, $\m(T_x-N(v_x))$ and using (\ref{eq0403}), (\ref{eq0404}), (\ref{eq0405}) and (\ref{claim.}), we can compare (\ref{eq0407}) and (\ref{eq0408}) case by case to deduce $m(G)<m(G_x^*)$ if $T_x\in\mathcal{T}_2(w)$ with $w\ne v_x$;
see  Table 4 in Appendix B for  all the possibilities of $m(T_x)$, $\m(T_x-v_x)$ and $\m(T_x-N(v_x))$.
	
	{\it Subcase 2.3.} $n_x\equiv2$ (mod 3). Since $v_x^*$
is not in any maximum dissociation set of $T^*_x$,   we have$$m(G^*)=m(T^*_x)\cdot m(G^*_x-T_x^*)=3^{(n_x-2)/3-1}m(G^*_x-T_x^*).$$	Suppose $T_x\notin\mathcal{T}_3\cup \mathcal{T}_4\cup \mathcal{T}_5$. Then by Lemma \ref{lem2.3}, we get $m(T_x)<3^{(n_x-2)/3-1}$. Note that $G-T_x\cong G^*_x-T^*_x$. By Corollary \ref{co2.9}, we have $$m(G)\le m(T_x)\cdot m(G-T_x)<3^{(n_x-2)/3-1}m(G-T_x)=m(G^*_x),$$ a contradiction. Thus, $T_x\in\mathcal{T}_3\cup \mathcal{T}_4\cup \mathcal{T}_5$.

Suppose $T_x\in\mathcal{T}_3(w)\cup \mathcal{T}_4(w)\cup \mathcal{T}_5(w)$ with $w\neq v_x$.
Then there is a maximum dissociation set $F_1\in MD(T_x)$ containing $v_x$.

On the other hand, we have
$m(G-T_x,u^1)\ne 0$. Otherwise, assume $m(G-T_x,u^1)=0$. Applying Lemma \ref{CLAIM3.3} on $G-T_x$, either there exists $i\neq x$ such that $\overline{m}(T_i-v_i)=0$ and  $\overline{m}(T_i-N(v_i))=0$, or there exist two distinct $i,j\neq x$ such that $\overline{m}(T_i-v_i)=\overline{m}(T_j-v_j)=0$, which both lead to $m(G,u^1)=0$ and contradicts (\ref{claim.}).

Since $F_1\cup F_2\notin MD(G)$ for all $F_2\in MD(G-T_x,u^1)$,    by Corollary \ref{co2.9}, we have $$m(G)<m(T_x)\cdot m(G-T_x)=3^{(n_x-2)/3-1}m(G-T_x)=m(G^*_x),$$ a contradiction. Therefore, we have $T_x\in\mathcal{T}_3(w)\cup \mathcal{T}_4(w)\cup \mathcal{T}_5(w)$ with $w=v_x$.

Combining the above cases,  for $x=1,\ldots,r$, $T_x$ satisfies
\begin{itemize}
\item[(i)] if $n_x\ge 4$, then $T_x\in\bigcup_{j=1}^5\mathcal{T}_j(v_x)$;
\item[(ii)] if $n_x=3$ and  $k\equiv 0~(\mod 3)$, $v_x$ is an arbitrary vertex of $T_x=P_3$;
\item[(iii)] if $n_x=3$ and  $k\not\equiv 0~(\mod  3)$, $v_x$ is an end vertex of $T_x=P_3$, i.e., $T_x\in \mathcal{T}_1(v_x)$.
\end{itemize}
Notice   that for $T, T'\in \bigcup_{j=1}^5\mathcal{T}_j(w)$ with $|V(T)|=|V(T')|$. We always have
$$m(T)=m(T'),\quad \m(T-w)=\m(T'-w) \quad {\rm and}\quad \m(T-N(w))=\m(T'-N(w)).$$
By Lemma \ref{CLAIM3.3}, $m(G)$ is a constant if it satisfies the conditions (i), (ii) and (iii). Thus we get the conclusion.
\end{proof}

\begin{corollary}\label{cl3.8}	Let $n,k,n_1,\ldots,n_{r-1}$ $(r\ge2)$ be positive integers larger than 2 and $n_r=1$ such that $n=k+n_1+\cdots+n_r$. Suppose $G\in\mathcal{P}(n,k)$  such that $G-C_k$ consists of $r$ components $T_1,\ldots,T_r$ with $|V(T_i)|=n_i$ for $i=1,\ldots,r$. If $k\not\equiv0$ (mod 3), then $m(G)$ attains the maximum if and only if $G=\bigcup_{i=1}^rT_i\cup C_k\cup \{uv_1,\ldots,uv_r\}$ with $u\in V(C_k)$  and $T_i\in\bigcup_{j=1}^5\mathcal{T}_j(v_i)$, with an additional case that $v_i$ is an arbitrary vertex of $T_i=P_3$ when   $n_i=3$,    $i= 1,\ldots,r-1$.
\end{corollary}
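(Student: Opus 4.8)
The plan is to use the single-vertex component $T_r=K_1$ to collapse the general identity of Lemma \ref{CLAIM3.3} into the two-term expression that already governs Case 1 of the proof of Theorem \ref{cl3.4}, and then to invoke that case almost verbatim. \textbf{Step 1 (reduction of the counting formula).} Write $G=\bigcup_{i=1}^{r}T_i\cup C_k\cup\{uv_1,\dots,uv_r\}$ with $u\in V(C_k)$, $v_i\in V(T_i)$, and $T_r=K_1=\{v_r\}$. I would first record the degenerate values carried by $T_r$: one has $m(T_r)=1$; the graph $T_r-v_r$ is empty, so by convention $\overline{m}(T_r-v_r)=0$; and $N_{T_r}(v_r)=\emptyset$, so $\overline{m}(T_r-N(v_r))=1$ (since $T_r-N(v_r)=K_1$ and $m(K_1)=1$). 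Feeding these into Lemma \ref{CLAIM3.3}, the vanishing factor $\overline{m}(T_r-v_r)=0$ annihilates $m(G,u^0)$, annihilates the first summand of $m(G,u^1)$, and annihilates every term with $j\neq r$ in the sum defining $m(G,u^1)$; only the $j=r$ term survives. Hence
\begin{equation*}
m(G)=m(C_k,u^-)\prod_{i=1}^{r-1}m(T_i)+m(C_k,u^0)\prod_{i=1}^{r-1}\overline{m}(T_i-v_i),
\end{equation*}
which has exactly the shape of the formula governing Case 1 of Theorem \ref{cl3.4} (cf.\ \eqref{Equation09}), with $m(C_k,u^0)$ in the role of $m(C_k,u^1)$ and the products now ranging only over the genuine tree components $T_1,\dots,T_{r-1}$.

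\textbf{Step 2 (optimisation).} Since $k\not\equiv0\pmod 3$, Corollary \ref{CLAIM3.2} gives $m(C_k,u^-)>0$ and $m(C_k,u^0)>0$, so both coefficients above are fixed positive constants. Assume $m(G)$ is maximal. I would first show that each $\overline{m}(T_i-v_i)\neq0$ (the analogue of \eqref{claim.}): otherwise, replacing every $T_i$ $(1\le i\le r-1)$ by an optimal $T_i^*\in\bigcup_{j=1}^{5}\mathcal{T}_j(v_i^*)$ of the same order leaves the first product non-decreased (Lemma \ref{lem2.3}) while turning the second product strictly positive (by \eqref{eq0403}), contradicting maximality. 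Then, for each $x\in\{1,\dots,r-1\}$ with $n_x\ge4$, forming $G_x^*$ as in \eqref{eqh0410} gives
\begin{equation*}
m(G_x^*)-m(G)=\bigl[m(T_x^*)-m(T_x)\bigr]C_1+\bigl[\overline{m}(T_x^*-v_x^*)-\overline{m}(T_x-v_x)\bigr]C_2,
\end{equation*}
with $C_1=m(C_k,u^-)\prod_{i\neq x}m(T_i)>0$ and $C_2=m(C_k,u^0)\prod_{i\neq x}\overline{m}(T_i-v_i)>0$ (both products over $1\le i\le r-1$). This difference is precisely the one analysed in Subcases 1.1--1.3 of Theorem \ref{cl3.4}; bounding $m(T_x)$ and $\overline{m}(T_x-v_x)$ via \eqref{eq0403}, \eqref{eq0404} and Lemma \ref{CL3.3} and running the same case split forces $T_x\in\bigcup_{j=1}^{5}\mathcal{T}_j(v_x)$.

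\textbf{Step 3 (the $P_3$ components and constancy).} For $n_x=3$ the only tree is $P_3$, and since $m(P_3)=3$ and $\overline{m}(P_3-v_x)=1$ for every vertex $v_x$, the reduced formula is insensitive to the attachment vertex, which is exactly the stated arbitrary-$P_3$ exception; note that, unlike Case 2 of Theorem \ref{cl3.4}, no $\overline{m}(T_j-N(v_j))$ term survives here to distinguish an end from the centre. Finally, as at the close of the proof of Theorem \ref{cl3.4}, any two members of $\bigcup_{j=1}^{5}\mathcal{T}_j(w)$ of equal order share the same values of $m(\cdot)$ and $\overline{m}(\cdot-w)$, so the reduced formula is constant over the asserted extremal family; this yields both directions of the characterisation.

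The principal obstacle is the bookkeeping of Step 1: one must check that the conventions for $\overline{m}$ at $K_1$ force $m(G,u^0)=0$ and, crucially, that it is the $u^0$ coefficient of the cycle -- not the $u^1$ coefficient -- that attaches to the surviving second product. Once this reduction is justified, the optimisation is essentially a transcription of Case 1 of Theorem \ref{cl3.4}, since that case relied only on the positivity of the two cycle coefficients, which persists here for $k\not\equiv0\pmod 3$.
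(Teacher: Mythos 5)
Your proposal is correct and follows essentially the same route as the paper: the degenerate values $\overline{m}(T_r-v_r)=0$, $\overline{m}(T_r-N(v_r))=1$ collapse Lemma \ref{CLAIM3.3} to exactly the two-term formula $m(G)=m(C_k,u^-)\prod_{i=1}^{r-1}m(T_i)+m(C_k,u^0)\prod_{i=1}^{r-1}\overline{m}(T_i-v_i)$ used in the paper's proof, the non-vanishing of each $\overline{m}(T_i-v_i)$ is established by the same replacement argument, and the optimisation is then transcribed from Case 1 of Theorem \ref{cl3.4}. Your observation that the surviving second product carries the $u^0$ coefficient of the cycle (whence the hypothesis $k\not\equiv 0\pmod 3$ is what makes it positive) is precisely the point on which the paper's argument also rests.
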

\begin{proof}	Let $G=\bigcup_{i=1}^rT_i\cup C_k\cup \{uv_1,\ldots,uv_r\}$ with $u\in V(C_k)$ and $v_i\in V(T_i)$ for $i=1,\ldots r$.
Suppose $m(G)$ attains the maximum. For $x=1,\ldots,r$, we define $G^*_x$ the same as (\ref{eqh0410}).  Notice that $\m(T_r-v_r)=0$ and $\m(T_r-N(v_r))=1$.  Applying Lemma \ref{CLAIM3.3}, we have
\begin{eqnarray}\label{11}
m(G)&=&m(C_k,u^-)\prod\limits_{i=1}^{r-1}m(T_i)+m(C_k,u^0)\prod\limits_{i=1}^{r-1}\overline{m}(T_i-v_i)\nonumber\\
&=&m(T_x)\cdot m(C_k,u^-)\prod\limits_{1\le i\le r-1,i\ne x}m(T_i)+\m(T_x-v_x)\cdot m(C_k,u^0)\prod\limits_{1\le i\le r-1,i\ne x}\overline{m}(T_i-v_i)\nonumber\\
\end{eqnarray}
and
\begin{align*}
m(G^*_x)=m(T_x^*)\cdot m(C_k,u^-)\prod\limits_{1\le i\le r-1,i\ne x}m(T_i)+\m(T_x^*-v_x^*)\cdot m(C_k,u^0)\prod\limits_{1\le i\le r-1,i\ne x}\overline{m}(T_i-v_i).
\end{align*}

We firstly  claim that
\begin{equation*}
	 \overline{m}(T_x-v_x)\ne 0 \quad {\rm for~all }\quad v_x\in V(T_x) ~{\rm and}~  x\in\{1,\ldots,r-1\}.
\end{equation*}
Otherwise, suppose there exists $x\in\{1,\ldots,r-1\}$ such that $\overline{m}(T_x-v_x)=0$ with $v_x\in V(T_x)$. By (\ref{11}) and Lemma \ref{lem2.3}, we have
\begin{align*}
	m(G)=m(C_k,u^-)\prod\limits_{i=1}^{r-1}m(T_i)\le m(C_k,u^-)\prod\limits_{i=1}^{r-1}m(T_i^*).
\end{align*}
Let $G^{'}=\bigcup_{i=1}^{r-1}T_i^*\cup T_r\cup C_k\cup\{uv_1^*,\ldots,uv_{r-1}^*,uv_r\}$. Then applying Lemma \ref{CLAIM3.3}, we have
\begin{align*}	m(G^{'})=m(C_k,u^-)\prod\limits_{i=1}^{r-1}m(T_i^*)+m(C_k,u^0)\prod\limits_{i=1}^{r-1}\overline{m}(T_i^*-v_i^*).
\end{align*} Notice that $\prod_{i=1}^{r-1}\overline{m}(T_i^*-v_i^*)\neq0$. We have $m(G^{'})>m(G)$, a contradiction.

Now applying the same arguments as in Case 1 of the proof of Theorem \ref{cl3.4}, we can deduce that  $T_i$ satisfies the following conditions for all $i= 1,\ldots,r-1$:
 \begin{itemize}
 \item[(i)] if $n_i\ge 4$, then $T_i\in\bigcup_{j=1}^5\mathcal{T}_j(v_i)$;
  \item[(ii)] if $n_i=3$, then $T_i=P_3$ with $v_i$ being an arbitrary vertex.
 \end{itemize}
 Notice that $m(G)$ is a constant if  $T_i$ satisfies (i) and (ii) for all     $i= 1,\ldots,r-1$. Thus we get the conclusion.
\end{proof}

\subsection{Proof of Theorem \ref{th1.1}}
\noindent\textit{Proof.}  Suppose $G\in\mathcal{P}(n,k)$ such that $G-C_k$ consists of $r$ components $T_1,\ldots,T_r$ with $|V(T_i)|=n_i$ for $i=1,\ldots,r$. Let $v_i\in V(T_i)$ be adjacent to $u\in V(C_k)$ for $i=1,\ldots,r$. We distinguish two cases.

{\it Case 1.} At least  one of $ n_1,\ldots,n_r $ is less than 3, say, $n_i\le 2$ for $i=1, \ldots, p$ with $p\ge 1$.
By Corollary \ref{CLAIM3.2},  we have $m(C_k,u^0)=0$. Notice that $\overline{m}(T_1-v_1)=0$. Applying Lemma \ref{CLAIM3.3}, we have $m(G,u^0)=m(G,u^1)=0$.  Therefore, $$m(G)=m(G,u^-)=m(C_k,u^-)\cdot m(G-C_k)=m(G-C_k).$$ Now applying Lemma \ref{lem2.4}, we have
\begin{equation}\label{equa0}
    m(G)=m(G-C_k)\le
    \begin{cases}
        \text{$3^{(n-k)/3}<3^{(n-k)/3}+\frac{1}{3}(n-k)+1$}, &\text{if $n\equiv0$ (mod 3);}\\
         \text{$3^{(n-k-1)/3}<3^{(n-k-1)/3}+1$}, &\text{if $n\equiv1$ (mod 3);}\\
          \text{$3^{(n-k-2)/3}$}, &\text{if $n\equiv2$ (mod 3).}\\
    \end{cases}
\end{equation}
Moreover, if $p=1$, then   equality in the case $n\equiv2$ (mod 3) holds  if and only if $n_1=2$ and $T_i\cong P_3$ for $i=2,\ldots,r$,  i.e., $G\in \mathcal{G}_6$; if $p=2$ and $n_1=n_2=1$,   equality in the case $n\equiv2$ (mod 3) holds if and only if $T_i\cong P_3$ for $i=3,\ldots,r$,  i.e., $G\in \mathcal{G}_7$; for   the other cases, all inequalities in (\ref{equa0}) are strict.

{\it Case 2.} $n_i\geq3$ for all $i=1,\ldots,r$. Without loss of generality, we assume
\begin{equation}\label{eqn1}
n_i\equiv \left\{\begin{array}{ll}
 0\text{ (mod 3)},& \text{  if  }i=1,\ldots,a;\\
 1\text{ (mod 3)},& \text{  if  }i=a+1,\ldots,b;\\
 2\text{ (mod 3)},& \text{  if  }i= b+1,\ldots,r;
 \end{array}\right.
 \end{equation}
 where $a=0$, $a=b$ and $b=r$  means there is no $i$ such that $n_i\equiv 0~(\mod 3)$, $n_i\equiv 1~(\mod 3)$ and  $n_i\equiv 2~(\mod 3)$, respectively.
By Theorem \ref{cl3.4}, $m(G)$ attains the maximum if and only if $G\cong G^*\equiv\bigcup_{i=1}^rT_i\cup C_k\cup \{uv_1,\ldots,uv_r\}$ with $u\in V(C_k)$ and
\begin{equation}\label{eqn2}
T_i\in \left\{\begin{array}{ll}
 \mathcal{T}_1(v_i),& \text{  if  }i=1,\ldots,a;\\
 \mathcal{T}_2(v_i),& \text{  if  }i=a+1,\ldots,b;\\
 \bigcup_{j=3}^5\mathcal{T}_j(v_i),& \text{  if  }i= b+1,\ldots,r,
 \end{array}\right.
 \end{equation} with an additional case that $v_i$ is an arbitrary vertex of $T_i=P_3$ when $n_i=3$ for $i= 1,\ldots,r$.
Moreover, by Lemma \ref{CLAIM3.3} and Corollary \ref{CLAIM3.2},   we have
\begin{align}\label{3.1}
   m(G^*) =&\prod\limits_{i=1}^{ a }(3^{n_i/3-1}+\frac{n_i}{3}+1)\prod\limits_{i=a+1}^{b}(3^{(n_i-1)/3-1}+1)
   \prod\limits_{i=b+1}^{ r}3^{(n_i-2)/3-1}\notag\\
   &+2\prod\limits_{i=1}^{ a }3^{n_i/3-1}\prod\limits_{i=a+1}^{b}3^{(n_i-1)/3-1}\prod\limits_{i=b+1}^{ r}3^{(n_i-2)/3-1},
\end{align}
where the item  $\prod_{i=1}^a(\cdots)$ does not appear if $a=0$,  the item   $\prod_{i=a+1}^b(\cdots)$ does not appear if $b=a$, and the item  $\prod_{i=b+1}^r(\cdots)$ does not appear if $r=b$.

Now we distinguish three cases.

{\it Subcase 2.1.} $n\equiv 0$ (mod 3).  By Lemma \ref{lem2.8} and Lemma \ref{le2.15}, (\ref{3.1})  attains  the maximum  if and only if $a=r=1$ and $n_1=n-k\ge 4$. Therefore,
\begin{align*}
	m(G)\le m(G^*)=3^{(n-k)/{3}}+\frac{1}{3}(n-k)+1,
\end{align*}
with equality if and only if $G\cong   G^{*} \in\mathcal{G}_1$.

 {\it Subcase 2.2.}  $n\equiv1$ (mod 3). Suppose (\ref{3.1})   attains  the maximum.  Then by Lemma \ref{le2.15},  we have $r=b= a +1$. If $a\ne 0$, by Lemma \ref{lem2.8}, we have
\begin{align}
     m(G)\le m(G^*)&=(3^{(n_{r}-1)/3-1}+1)\prod_{i=1}^a(3^{n_i/3-1}+\frac{n_i}{3}+1)
     +2\times3^{(n_{r}-1)/3-1} \prod_{i=1}^a3^{n_i/3-1}\notag\\
     &= 3^{(n_{r}-1)/3-1}\left[\prod_{i=1}^a(3^{n_i/3-1}+\frac{n_i}{3}+1)
     +2 \prod_{i=1}^a3^{n_i/3-1}\right]+\prod_{i=1}^a(3^{n_i/3-1}+\frac{n_i}{3}+1)\notag\\
    &\leq 3^{(n_{r}-1)/{3}-1}\left[3^{(n-k-n_{r})/{3}}+\frac{1}{3} (n-k-n_{r})+1\right]  +\prod_{i=1}^a\left(3^{ {n_i}/{3}-1}+\frac{n_i}{3}+1\right)\notag\\
    &\leq3^{(n-k-1)/{3}-1}+\left[\frac{1}{3} (n-k-n_{r})+1\right]  \times 3^{(n_{r}-1)/3-1}+3^{(n-k-n_r)/3}\notag\\
    &\leq2\times3^{(n-k-1)/3-1}+\left[\frac{1}{3} (n-k-n_{r})+1\right]\times 3^{(n_{r}-1)/3-1}\notag\\
    &<3^{(n-k-1)/3}+1.\notag
\end{align}
If $a=0$, then
\begin{align}
	m(G)\leq m(G^*)=3^{(n-k-1)/3}+1,\notag
\end{align} with equality if and only if $G\cong G^{*} \in\mathcal{G}_2$.

 {\it Subcase 2.3.}  $n\equiv2$ (mod 3).  Suppose (\ref{3.1})  attains  the maximum. Then by Lemma \ref{le2.15},  one of the following holds:
 \begin{itemize}
 \item[(i)] $r=b=a+2$ and $n_{a+1}=n_b=4$;
 \item[(ii)] $b=a, r=a+1$.
 \end{itemize}

Suppose (i) holds. If $a\ne 0$,  then by    Lemma \ref{lem2.8}, we have
\begin{align*}
    m(G)&\leq   m(G^*)=4\times\prod_{i=1}^a\left(3^{n_i/3-1}+\frac{n_i}{3}+1\right)
    +2\times\prod_{i=1}^a3^{n_i/3-1}\\
    &<4\times\left[3^{(n-k-8)/3}+\frac{1}{3}(n-k-8)+1\right]  \\
    &<3^{(n-k-2)/3}.
\end{align*}If $a=0$, then $n-k=8$ and $$m(G)\leq  m(G^*)=6<3^{(n-k-2)/3}.$$

Suppose (ii) holds. If $a\neq0$, then by Lemma \ref{lem2.8}, we have
\begin{align*}
    m(G)&\leq  m(G^*)=3^{(n_{r}-2)/3-1}\prod_{i=1}^a(3^{n_i/3-1}
    +\frac{n_i}{3}+1)+2\times3^{(n_{r}-2)/3-1}\prod_{i=1}^a3^{n_i/3-1}\\
    &\leq3^{(n_{r}-2)/3-1}\times\left[3^{(n-k-n_r)/3}+\frac{1}{3}(n-k-n_r)+1\right] \\
    &<3^{(n-k-2)/3}.
\end{align*}
If $a=0$, then  $$m(G) \leq m(G^* )=3^{(n-k-2)/3},$$ with equlity if and only if $G\cong G^{*} \in\mathcal{G}_3\cup\mathcal{G}_4\cup\mathcal{G}_5$.

Now we are ready to make the conclusion of Theorem \ref{th1.1}. When $n\equiv0$ (mod 3), combining (\ref{equa0}) and Subcase 2.1,  we have
$$m(G)\le 3^{(n-k)/3}+(n-k)/3+1,$$ with equality if and only if $G\in \mathcal{G}_1$.

 When $n\equiv1$ (mod 3), combining (\ref{equa0}) and Subcase 2.2, we have $$m(G)\le 3^{(n-k-1)/3}+1,$$ with equality if and only if $G\in \mathcal{G}_2$.

  When $n\equiv2$ (mod 3), combining (\ref{equa0}) and Subcase 2.3, we have $$m(G)\le 3^{(n-k-2)/3},$$ with equality if and only if $G\in \bigcup_{i=3}^{7}\mathcal{G}_i$.

  This completes the proof of Theorem \ref{th1.1} .\hfill\qedsymbol

\subsection{Proof of Theorem \ref{th1.2}}

Given a nonnegative integer $x\geq0$, let $n_1,\ldots,n_x$ be positive integers. We denote by
\begin{equation*}
	h(x)=
	\begin{cases}
		\text{1,}&\quad\text{if $x=0$};\\
		\text{$3^{n_1/3-1}+1$,}&\quad\text{if $x=1$};\\
		\text{$\prod\limits_{i=1}^x3^{n_i/3-1}+\sum\limits_{j=1}^x\prod\limits_{1\le i\le x, i\ne j}3^{n_i/3-1}$,}&\quad\text{if $x>1$}.\\
	\end{cases}
\end{equation*}
\noindent\textit{Proof.} Suppose $G\in\mathcal{P}(n,k)$ such that $G-C_k$ consists of $r$ components $T_1,\ldots,T_r$ with $|V(T_i)|=n_i$ for $i=1,\ldots,r$. Let $v_i\in V(T_i)$ be adjacent to $u\in V(C_k)$ for $i=1,\ldots,r$.  We distinguish two cases.

{\it Case 1.} At least  one of $ n_1,\ldots,n_r $ is less than 3, say, $n_i\le 2$ for $i=r-p+1, \ldots, r$ with $p\ge 1$.

  If $n_{r-p+1}+\cdots+n_r\ge 2$, then by Lemma \ref{CLAIM3.3}, $m(G,u^0)=m(G,u^1)=0$. Therefore, $$m(G)=m(G,u^-)=m(C_k,u^-)\cdot m(G-C_k)=m(C_k,u^-)\cdot m(G-C_k-T_{r-p+1}-\cdots-T_r).$$
  Applying Corollary \ref{CLAIM3.2} and  Lemma \ref{lem2.4}, we have
\begin{equation}\label{equation1}
   m(G)\le
   \begin{cases}
       \text{$ \frac{1}{3}(k+1)\times3^{(n-k-4)/3}<x_2,$}&\text{if $n\equiv0$ (mod 3);}\\
       \text{$ \frac{1}{3}(k+1)\times3^{(n-k-2)/3},$}&\text{if $n\equiv1$ (mod 3);}\\
       \text{$ \frac{1}{3}(k+1)\times3^{(n-k-3)/3}<x_4,$}&\text{if $n\equiv2$ (mod 3).}\\
   \end{cases}
\end{equation}
Moreover,  equality in (\ref{equation1}) holds if and only if $n\equiv1$ (mod 3) and one of the following holds:
\begin{itemize}
\item[(a)] $p=1$, $n_r=2$, $T_i\cong P_3$ for $i=1,\ldots,r-1$, i.e.,   $G\in\mathcal{G}_6$;
\item[(b)] $p=2$, $n_{r-1}=n_r=1$, $T_i\cong P_3$ for $i=1,\ldots,r-2$,  i.e.,   $G\in\mathcal{G}_7$.
\end{itemize}

 Now we assume $n_{r-p+1}+\cdots+n_r=1$, i.e., $p=1$ and $n_r=1$.   Without loss of generality, we assume
$$n_i\equiv \left\{\begin{array}{ll}
 0\text{ (mod 3)},& \text{  if  }i=1,\ldots,a;\\
 1\text{ (mod 3)},& \text{  if  }i=a+1,\ldots,b;\\
 2\text{ (mod 3)},& \text{  if  }i= b+1,\ldots,r-1.
 \end{array}\right.$$

 By Corollary \ref{cl3.8}, $m(G)$ attains the maximum   only if $G\cong G^*\equiv\bigcup_{i=1}^rT_i\cup C_k\cup \{uv_1,\ldots,uv_r\}$ with $u\in V(C_k)$, $V(T_r)=\{v_r\}$   and  $T_i\in\bigcup_{j=1}^5\mathcal{T}_j(v_i)$, with an additional case that $v_i$ is an arbitrary vertex of $T_i=P_3$ when   $n_i=3$,    $i= 1,\ldots,r-1$. Note that $\overline{m}(T_r-v_r)=0$ and $\overline{m}(T_r-N(v_r))=1$. By  Corollary \ref{CLAIM3.2} and Lemma \ref{CLAIM3.3}, we have
\begin{align}
   m(G^*) &=\frac{1}{3}(k+1)\prod_{i=1}^a(3^{n_i/3-1}+\frac{n_i}{3}+1)\prod_{i=a+1}^b(3^{(n_i-1)/3
   -1}+1)\prod_{i=b+1}^{r-1}3^{(n_i-2)/3-1}\notag\\
   &~~~+\prod_{i=1}^a3^{n_i/3-1}\prod_{i=a+1}^b3^{(n_i-1)/3-1}\prod_{i=b+1}^{r-1}3^{(n_i-2)/3-1}\equiv f.\label{3.4}
\end{align}

We distinguish three subcases.

{\it Subcase 1.1.} $n\equiv0$ (mod 3). Suppose  $f$ attains the maximum.  Then by Lemma \ref{le2.15}, we have $a=r-1$, i.e., $n_i\equiv 0$ (mod 3) for $i=1,\ldots, r-1$ and $n_1+\cdots+n_{r-1}=n-k-1\equiv 0$ (mod 3). Applying Lemma \ref{lem2.9} (i) with $m=n-k-1, l=k$ and $i=r-1$, we have
\begin{align}
    f&=\frac{1}{3}(k+1)\prod\limits_{i=1}^{r-1}(3^{n_i/3-1}+\frac{n_i}{3}+1)
    +\prod\limits_{i=1}^{r-1}3^{n_i/3-1}\notag\\
    &\leq (k+1)\times3^{(n-k-1)/3-1}+1=x_2,\notag
\end{align}with equality if and only if $n_1=\cdots=n_{r-1}=3$, i.e., $G\in\mathcal{G}_9$.

{\it Subcase 1.2.} $n\equiv1$ (mod 3). Similarly,  suppose  $f$ attains the maximum.  Then by Lemma \ref{le2.15}, we have $b=a+1=r-1$, i.e., $n_{r-1}\equiv 1$ (mod 3), $n_i\equiv 0$ (mod 3) for $i=1,\ldots, r-2$ and $n_1+\cdots+n_{r-1}=n-k-1\equiv 1$ (mod 3).

Note that for a positive integer $t\equiv0$ (mod 3), we have $3^{t/3-1}+t/3+1\leq 3^{t/3}$. If $a\neq0$, then
\begin{align*}
m(G)&\leq f
=\frac{1}{3}(k+1)(3^{(n_{r-1}-1)/3-1}+1)\prod_{i=1}^{r-2}(3^{n_i/3-1}
     +\frac{n_i}{3}+1)+3^{(n_{r-1}-1)/3-1}\prod_{i=1}^{r-2}3^{n_i/3-1}\\
     &\le 3^{(n_{r-1}-1)/{3}-1}\left[\frac{1}{3}(k+1)\prod_{i=1}^{r-2}(3^{n_i/3-1}
     +\frac{n_i}{3}+1)+ \prod_{i=1}^{r-2}3^{n_i/3-1}  \right]\\
     &\quad+\frac{1}{3}(k+1)\times3^{(n-k-1-n_{r-1})/3} \tag{by Lemma \ref{lem2.9} (i)}\\
    &\leq3^{(n_{r-1}-1)/3-1}\times\left[\frac{1}{3}(k+1)\times3^{(n-k-1-n_{r-1})/3}+1\right]
    +\frac{1}{3}(k+1)\times3^{(n-k-1-n_{r-1})/3} \\
    &<(k+1)\times3^{(n-k-2)/3-1}.
\end{align*}

If $a=0$, then
\begin{align*}
    m(G)\leq f&= \frac{1}{3}(k+1)(3^{(n-k-2)/3-1}+1)+3^{(n-k-2)/3-1} <(k+1)\times3^{(n-k-2)/3-1}.
\end{align*}

{\it Subcase 1.3.}  $n\equiv2$ (mod 3). Suppose  $f$ attains the maximum.  Then by Lemma \ref{le2.15}, one of the following holds:
 \begin{itemize}
 \item[(i)] $b=a+2= r-1$ and $n_{a+1}=n_b=4$, i.e., $n_{r-2}=n_{r-1}=4$ and $n_i\equiv 0$ (mod 3) for $i=1,\ldots, r-3$;
 \item[(ii)]  $a=b=r-2$, i.e., $n_{r-1}\equiv 2$ (mod 3), $n_i\equiv 0$ (mod 3) for $i=1,\ldots, r-2$ and $n_1+\cdots+n_{r-1}=n-k-1\equiv 2$ (mod 3).
 \end{itemize}

 Suppose (i) holds.  If $a\neq0$, then by (\ref{3.4}) and applying  Lemma \ref{lem2.9} (i) with $m=n-8$ and $l=k$, we have
\begin{align*}
    m(G)&\leq f =\frac{4}{3}(k+1)\prod_{i=1}^{r-3}(3^{n_i/3-1}+\frac{n_i}{3}+1)+\prod_{i=1}^{r-3}3^{n_i/3-1}\\
    &<4\left[\frac{1}{3}(k+1)\times3^{(n-k-9)/3}+1\right]<x_4.
\end{align*}
If $a=0$, then $n-k=9$,  $x_3=(31k+7)/3$ and $x_4=(29k+35)/3$. It follows that
 $$m(G)\leq f=\frac{4}{3}(k+1)+1<  x_4.$$

 Now suppose (ii) holds. If $a\neq0$, then by (\ref{3.4}) and applying  Lemma \ref{lem2.9} (i) with $m=n-n_{r-1}$ and $l=k$, we have
\begin{align*}
      m(G)&\leq f
=\frac{1}{3}(k+1)\times3^{(n_{r-1}-2)/3-1}\prod_{i=1}^{r-2}
     (3^{n_i/3-1}+\frac{n_i}{3}+1)+3^{(n_{r-1}-2)/3-1}\prod_{i=1}^{r-2}3^{n_i/3-1}\\
     &\leq3^{(n_{r-1}-2)/3-1}\times\left[\frac{1}{3}(k+1)\times3^{(n-k-1-n_{r-1})/3}+1\right] <x_4.
\end{align*}
If $a=0$, then $$m(G)\leq f=\frac{1}{3}(k+1)\times3^{(n-k-3)/3-1}+3^{(n-k-3)/3-1}<x_4.$$

{\it Case 2.}  $n_i\geq3$ for $i=1,\ldots,r$. Similarly as in Case 2 in the proof of Theorem \ref{th1.1}, we suppose (\ref{eqn1}) holds.
By Theorem \ref{cl3.4}, $m(G)$ attains the maximum   only if $G\cong G^*\equiv\bigcup_{i=1}^rT_i\cup C_k\cup \{uv_1,\ldots,uv_r\}$ with $u\in V(C_k)$ and $T_i$ satisfying (\ref{eqn2}). Then  by Corollary \ref{CLAIM3.2} and Lemma \ref{CLAIM3.3}, we have
\begin{align}\label{3.5}
  m(G^*)&=\frac{1}{3}(k+1)\prod_{i=1}^a(3^{n_i/3-1}
  +\frac{n_i}{3}+1)\prod_{i=a+1}^b(3^{(n_i-1)/3-1}+1)\prod_{i=b+1}^r3^{(n_i-2)/3-1}\notag\\
   &~~~+\frac{2}{3}(k-2)\prod_{i=1}^a3^{n_i/3-1}\prod_{i=a+1}^b3^{(n_i-1)/3-1}\prod_{i=b+1}^r3^{(n_i-2)/3-1}\notag\\
   &~~~+h(a)\prod_{i=a+1}^b3^{(n_i-1)/3-1}\prod_{i=b+1}^r3^{(n_i-2)/3-1}.
\end{align}
We distinguish three subcases.

{\it Subcase 2.1.} $n\equiv0$ (mod 3).  Similarly, suppose (\ref{3.5}) attains the maximum.  By Lemma \ref{le2.15}, we have $b=a+1=r$, i.e., $n_r\equiv1$ (mod 3), $n_i\equiv0$ (mod 3) for $i=1,\ldots,r-1$ and $n_1+\cdots +n_r=n-k\equiv1$ (mod 3).

 If $a=0$, i.e., $n_r=n-k$, then by (\ref{3.5}), we have
\begin{align}
     m(G)&\leq m(G^*)=\frac{1}{3}(k+1)(3^{(n_r-1)/3-1}+1)+\frac{2}{3}(k-2)\times3^{(n_r-1)/3-1}+3^{(n_r-1)/3-1}\notag\\
     &= k\times3^{(n-k-1)/3-1}+\frac{1}{3}(k+1)=x_1,\notag
\end{align}with equality if and only if $G\in\mathcal{G}_2$.

Now  suppose $a\ge 1$. Then we have $5\le k\le n-7$, which implies $n\ge 12$.
Let $$\varphi(x)=\frac{1}{3}(k+1)\times3^{(n-k-1)/3-1}+\frac{1}{3}(n+k-1-x)\times3^{(x-1)/3-1}
+\frac{1}{3}(k+1)\times3^{(n-k-x)/3}. $$ One may check that $\varphi'(4)<0$, $\varphi'(n-k)>0$ and  $\varphi''(x)>0$ for $x\in[4\text{, }n-k]$. Therefore,   $\varphi(x)$ firstly decreases and then increases when $x\in[4\text{, }n-k]$. Notice that $$\varphi(4)=2(k+1)\cdot3^{(n-k-1)/3-2}+(n+k-5)/3\quad \text{and}\quad \varphi(n-k)=x_1.$$ Let$$\phi(k)=x_1-\varphi(4)=\frac{1}{3}(k-2)\times3^{(n-k-1)/3-1}+2-\frac{n}{3}.$$Since $\phi(k)$ is strictly decreasing when $5\le k\le n-7$, we have $\phi(k)\ge\phi(n-7)>0$, which implies $x_1>\varphi(4)$.

 If $a>1$, then by (\ref{3.5}), we have
\begin{align}
    m(G)&\leq m(G^*)\notag\\ &=\frac{1}{3}(k+1)(3^{(n_r-1)/3-1}+1)\prod\limits_{i=1}^{r-1}(3^{n_i/3-1}+\frac{n_i}{3}+1) +\frac{2}{3}(k-2)\times3^{(n_r-1)/3-1}\prod\limits_{i=1}^{r-1}3^{n_i/3-1}\notag\\
    &~~~+3^{(n_r-1)/3-1}\Big(\prod\limits_{i=1}^{r-1}3^{n_i/3-1}+\sum\limits_{j=1}^{r-1}\prod\limits_{1\le i\le r-1,i\neq j}3^{n_i/3-1}\Big)\notag\\
    &\leq3^{(n_r-1)/3-1}\left[\frac{1}{3}(k+1)\times3^{(n-k-n_r)/3}+\frac{1}{3}(n+k-1-n_r)\right]\notag\\
    &~~~+\frac{1}{3}(k+1)\prod\limits_{i=1}^{r-1}(3^{n_i/3-1}+\frac{n_i}{3}+1)\tag{by Lemma \ref{lem2.9} (ii)}\\
    &\leq\frac{1}{3}(k+1)\times3^{(n-k-1)/3-1}+\frac{1}{3}(n+k-1-n_r)\times3^{(n_r-1)/3-1}\notag +\frac{1}{3}(k+1)\times3^{(n-k-n_r)/3}\notag\\
    &\le\max\{\varphi(4),~\varphi(n-k-6)\}<x_1.\notag
\end{align}
If $a=1$, then by (\ref{3.5}), we have
\begin{align}
     m(G)&\leq m(G^*)\notag\\
     &=\frac{1}{3}(k+1)(3^{n_1/3-1}+\frac{n_1}{3}+1)(3^{(n_r-1)/3-1}+1)\notag\\
     &~~~+\frac{2}{3}(k-2)\times3^{n_1/3-1}\times3^{(n_r-1)/3-1}+(3^{n_1/3-1}+1)\times3^{(n_r-1)/3-1}\notag\\
     &<\frac{1}{3}(k+1)\times3^{n_1/3}\times(3^{(n_r-1)/3-1}+1)+\frac{2}{3}(k-2)\times3^{(n-k-1)/3-2}+3^{(n-k-1)/3-1}\notag\\
     &<\frac{1}{3}(k+1)\times3^{n_1/3}\times2\times3^{(n_r-1)/3-1}+\frac{2}{9}(k-2)\times3^{(n-k-1)/3-1}+3^{(n-k-1)/3-1}\notag\\
     &<x_2.\notag
\end{align}

{\it Subcase 2.2.} $n\equiv1$ (mod 3). Suppose (\ref{3.5}) attains the maximum.  By Lemma \ref{le2.15},   one of the following holds:
 \begin{itemize}
	\item[(i)] $b=a+2= r$ and $n_{a+1}=n_b=4$, i.e., $n_{r-1}=n_{r}=4$ and $n_i\equiv 0$ (mod 3) for $i=1,\ldots, r-2$;
	\item[(ii)]  $a=b=r-1$, i.e., $n_{r}\equiv 2$ (mod 3), $n_i\equiv 0$ (mod 3) for $i=1,\ldots, r-1$ and $n_1+\cdots+n_{r}=n-k\equiv 2$ (mod 3).
\end{itemize}

Suppose (i) holds. If $a>1$, then $5\le k\le n-14$. By (\ref{3.5}), we have
\begin{align}
	m(G)&\leq m(G^*)\notag\\
&=\frac{4}{3}(k+1)\prod\limits_{i=1}^{r-2}(3^{n_i/3-1}+\frac{n_i}{3}+1)+\frac{1}{3}(2k-1)\prod\limits_{i=1}^{r-2}3^{n_i/3-1}+\sum\limits_{j=1}^{r-2}\prod\limits_{1\le i\le r-2,i\neq j}3^{n_i/3-1}\notag\\
	&\leq\frac{1}{3}(k+1)\times3^{(n-k-n_{r-1}-n_r)/3}+\frac{1}{3}(n+k-1-n_{r-1}-n_r)\notag
	 +(k+1)\prod\limits_{i=1}^{r-2}(3^{n_i/3-1}+\frac{n_i}{3}+1)\tag{applying Lemma \ref{lem2.9} (ii)  with $m=n-n_{r-1}-n_r$}\notag\\	&\leq\frac{1}{3}(k+1)\times3^{(n-k-8)/3}+\frac{1}{3}(n+k-9)+(k+1)\times3^{(n-k-8)/3}\equiv\beta_1\notag\\
	&<(k+1)\times3^{(n-k-2)/3-1}\equiv\alpha_1.\notag
\end{align}
The last inequality  holds because $\varphi(k)=\alpha_1-\beta_1=\frac{5}{9}(k+1)\times3^{(n-k-2)/3-1}-\frac{1}{3}(n+k-9)$ is strictly decreasing when $5\le k\le n-8$, which leads to $\alpha_1-\beta_1>\varphi(n-8)>0.$

 If $a=1$, i.e., $n_1=n-k-8$, then by (\ref{3.5}), we have
\begin{align*}
	m(G)\leq m(G^*)&=\frac{4}{3}(k+1)(3^{n_1/3-1}+\frac{n_1}{3}+1)+\frac{2}{3}(k-2)\times3^{n_1/3-1}+3^{n_1/3-1}+1\\
	&<\frac{4}{3}(k+1)\times3^{n_1/3}+\frac{2}{3}(k-2)\times3^{n_1/3}+3^{n_1/3}\\
	&<(k+1)\times3^{(n-k-2)/3-1}.
\end{align*}If $a=0$, i.e., $n =k+8$, then by (\ref{3.5}), we have $$m(G)\leq m(G^*)=2k+1<(k+1)\times3^{(n-k-2)/3-1}.$$

Suppose (ii) holds. If $a>1$, then by (\ref{3.5}), we have
\begin{align*}
    m(G)&\leq m(G^*)\notag\\
    &= \frac{k+1}{3}\times3^{(n_r-2)/3-1}\prod\limits_{i=1}^{r-1}(3^{n_i/3-1}+\frac{n_i}{3}+1)
    +\frac{2}{3}(k-2)\times3^{(n_r-2)/3-1}\prod\limits_{i=1}^{r-1}3^{n_i/3-1}\notag\\
    &~~~+3^{(n_r-2)/3-1}\Big(\prod\limits_{i=1}^{r-1}3^{n_i/3-1}+\sum\limits_{j=1}^{r-1}\prod\limits_{1\le i\le r-1,i\neq j}3^{n_i/3-1}\Big)\\
    &\leq3^{(n_r-2)/3-1}\left[\frac{1}{3}(k+1)\times3^{(n-k-n_r)/3}+\frac{1}{3}(n+k-1-n_r)\right]\tag{by Lemma \ref{lem2.9} (ii)}\\
    &<(k+1)\times3^{(n-k-2)/3-1}.
\end{align*}If $a=1$, i.e., $n_1+n_r=n-k$, then by (\ref{3.5}), we have
\begin{align*}
    m(G)&\leq m(G^*)\\
    &=\frac{1}{3}(k+1)\times3^{(n_r-2)/3-1}\times(3^{n_1/3-1}+\frac{n_1}{3}+1)\\
    &~~~+\frac{2}{3}(k-2)\times3^{(n_r-2)/3-1}\times3^{n_1/3-1}+3^{(n_r-2)/3-1}\times(3^{n_1/3-1}+1)\\
    &<\frac{1}{3}(k+1)\times3^{(n-k-2)/3-1}+\frac{2}{3}(k-2)\times3^{(n-k-2)/3-1}+3^{(n-k-2)/3-1}\\
    &<(k+1)\times3^{(n-k-2)/3-1}.
\end{align*}If $a=0$, then by (\ref{3.5}), we have
 $$m(G)\leq m(G^*)= k\times3^{(n-k-2)/3-1}<(k+1)\times3^{(n-k-2)/3-1}.$$

{\it Subcase 2.3.} $n\equiv2$ (mod 3). Suppose (\ref{3.5}) attains the maximum.  By Lemma \ref{le2.15}, we have  $a=r$, i.e., $n_i\equiv0$ (mod 3) for $i=1,\ldots,r$.

 If $a>1$, then by (\ref{3.5}) and   Lemma \ref{lem2.9} (ii), we have
\begin{align}
    m(G)\leq m(G^*)&=\frac{1}{3}(k+1)\prod\limits_{i=1}^{r}(3^{n_i/3-1}+\frac{n_i}{3}+1)+\frac{1}{3}(2k-1)\prod\limits_{i=1}^{r}3^{n_i/3-1}+\sum\limits_{j=1}^{r}\prod\limits_{i\neq j}3^{n_i/3-1}\notag\\
    &\leq (k+1)\times3^{(n-k)/3-1}+\frac{1}{3}(n+k-1)=x_4,\notag
\end{align}with equality  if and only if $n_1=\cdots= n_r=3$, i.e., $G\cong G^*\in\mathcal{G}_8$.

 If $a=1$, then according to (\ref{3.5}), we have
\begin{align}
     m(G)\leq m(G^*)= k\times3^{(n-k)/3-1}+\frac{1}{3}(k+1)\left[\frac{1}{3}(n-k)+1\right]+1=x_3,\notag
\end{align}with equality if and only if $G\cong G^*\in\mathcal{G}_1$.

Now we are ready to make the conclusion of Theorem \ref{th1.2}. When $n\equiv0$ (mod 3), combining (\ref{equation1}), Subcase 1.1 and Subcase 2.1, we have
 \begin{equation*}\label{eqh0421}
 m(G)\le\max\{x_1,x_2\},
  \end{equation*}
with equality if and only if $G\in\mathcal{H}_i$ and $x_i=\max\{x_1,x_2\}$ with $i\in\{1,2\}$, where $\mathcal{H}_1=\mathcal{G}_2\text{ and }\mathcal{H}_2=\mathcal{G}_9.$

When $n\equiv1$ (mod 3),  combining (\ref{equation1}), Subcase 1.2 and Subcase 2.2, we have
$$m(G)\le \frac{1}{3}(k+1)\times3^{(n-k-2)/3},$$ with equality if and only if $G\in\mathcal{G}_6\cup\mathcal{G}_7$.

 When $n\equiv2$ (mod 3), combining (\ref{equation1}), Subcase 1.3 and Subcase 2.3, we have \begin{equation*}\label{eqh0422}
 m(G)\le\max\{x_3, x_4\},
 \end{equation*}
 with equality   if and only if $G\in\mathcal{H}_i$ and $x_i=\max\{x_3,x_4\}$ with $i\in\{3,4\}$, where $\mathcal{H}_3=\mathcal{G}_1,~\mathcal{H}_4=\mathcal{G}_8.$

 This completes the proof of Theorem \ref{th1.2}. \hfill\qedsymbol

\subsection{Proof of Theorem \ref{th1.3}}

We apply similar arguments as in the proof of   Theorem \ref{th1.2} to prove Theorem \ref{th1.3}.

\noindent\textit{Proof.}
 Suppose $G\in\mathcal{P}(n,k)$ such that $G-C_k$ consists of $r$ components $T_1,\ldots,T_r$ with $|V(T_i)|=n_i$ for $i=1,\ldots,r$. Let $v_i\in V(T_i)$ be adjacent to $u\in V(C_k)$ for $i=1,\ldots,r$.  We distinguish two cases.

 {\it Case 1.} At least  one of $ n_1,\ldots,n_r $  is less than 3, say, $n_i\le 2$ for $i=r-p+1, \ldots, r$ with $p\ge 1$.

 If $n_{r-p+1}+\cdots+n_r\ge 2$, then by Lemma \ref{CLAIM3.3}, $m(G,u^0)=m(G,u^1)=0$. Therefore, $$m(G)=m(G,u^-)=m(C_k,u^-)\cdot m(G-C_k)=m(C_k,u^-)\cdot m(G-C_k-T_{r-p+1}-\cdots-T_r).$$
 Applying Corollary \ref{CLAIM3.2} and  Lemma \ref{lem2.4}, we have
 \begin{equation}\label{equation010}
 	m(G)\le
 	\begin{cases}
 		\text{$ \frac{1}{18}(k+2)(k+5)\times3^{(n-k-2)/3},$}&\text{if $n\equiv0$ (mod 3);}\\
 		\text{$ \frac{1}{18}(k+2)(k+5)\times3^{(n-k-3)/3}<y_1,$}&\text{if $n\equiv1$ (mod 3);}\\
 		\text{$ \frac{1}{18}(k+2)(k+5)\times3^{(n-k-4)/3}<y_4,$}&\text{if $n\equiv2$ (mod 3).}\\
 	\end{cases}
 \end{equation}
 Moreover,  equality in (\ref{equation010}) holds if and only if $n\equiv0$ (mod 3) and one of the following holds:
 \begin{itemize}
 	\item[(a)] $p=1$, $n_r=2$, $T_i\cong P_3$ for $i=1,\ldots,r-1$, i.e.,   $G\in\mathcal{G}_6$;
 	\item[(b)] $p=2$, $n_{r-1}=n_r=1$, $T_i\cong P_3$ for $i=1,\ldots,r-2$,  i.e.,   $G\in\mathcal{G}_7$.
 \end{itemize}

 Now we assume $n_{r-p+1}+\cdots+n_r=1$, i.e., $p=1$ and $n_r=1$.   Without loss of generality, we assume
 $$n_i\equiv \left\{\begin{array}{ll}
 	0\text{ (mod 3)},& \text{  if  }i=1,\ldots,a;\\
 	1\text{ (mod 3)},& \text{  if  }i=a+1,\ldots,b;\\
 	2\text{ (mod 3)},& \text{  if  }i= b+1,\ldots,r-1.
 \end{array}\right.$$

  By Corollary \ref{cl3.8}, $m(G)$ attains the maximum  only if $G\cong G^*\equiv\bigcup_{i=1}^rT_i\cup C_k\cup \{uv_1,\ldots,uv_r\}$ with $u\in V(C_k)$, $V(T_r)=\{v_r\}$   and  $T_i\in\bigcup_{j=1}^5\mathcal{T}_j(v_i)$, with an additional case that $v_i$ is an arbitrary vertex of $T_i=P_3$ when   $n_i=3$,    $i= 1,\ldots,r-1$.  Note that $\overline{m}(T_r-v_r)=0$ and $\overline{m}(T_r-N(v_r))=1$. By  Corollary \ref{CLAIM3.2} and Lemma \ref{CLAIM3.3}, we have
 \begin{align}
 	m(G^{*})&=\frac{1}{18}(k+2)(k+5)\prod\limits_{i=1}^{a}(3^{n_i/3-1}+\frac{n_i}{3}+1)\prod\limits_{i=a+1}^{b}(3^{(n_i-1)/3-1}+1)\prod\limits_{i=b+1}^{r-1}3^{(n_i-2)/3-1}\notag\\
 	&~~~+\frac{1}{3}(k-1)\prod\limits_{i=1}^{a}3^{n_i/3-1}\prod\limits_{i=a+1}^{b}3^{(n_i-1)/3-1}\prod\limits_{i=b+1}^{r-1}3^{(n_i-2)/3-1}.\label{3.44}
 \end{align}

 We distinguish three subcases.

 {\it Subcase 1.1.} $n\equiv0$ (mod 3). Suppose (\ref{3.44}) attains the maximum. By Lemma \ref{le2.15}, we have $b=a+1=r-1$, i.e., $n_{r-1}\equiv 1$ (mod 3), $n_i\equiv 0$ (mod 3) for $i=1,\ldots, r-2$ and $n_1+\cdots+n_{r-1}=n-k-1\equiv 1$ (mod 3).

 If $a\neq0$, then we have
 \begin{align*}
 	m(G) &\leq m(G^*)\\  &=\frac{1}{18}(k+2)(k+5)(3^{(n_{r-1}-1)/3-1}+1)\prod\limits_{i=1}^{r-2}(3^{n_i/3-1}+\frac{n_i}{3}+1)\\
 	&~~~+\frac{1}{3}(k-1)\times3^{(n_{r-1}-1)/3-1}\prod\limits_{i=1}^{r-2}3^{n_i/3-1}\\
 	&\leq\left[\frac{1}{18}(k+2)(k+5)\times3^{(n-k-1-n_{r-1})/3}+\frac{1}{3}(k-1)\right]\times3^{(n_{r-1}-1)/3-1}\\
 	&~~~+\frac{1}{18}(k+2)(k+5)\prod\limits_{i=1}^{r-2}(3^{n_i/3-1}+\frac{n_i}{3}+1)\tag{by Lemma \ref{lem2.10} $(i)$}\\
 	&<\frac{1}{18}(k+2)(k+5)\times3^{(n-k-2)/3}.
 \end{align*}
 If $a=0$, i.e., $r=2$ and $n_{r-1}=n_1=n-k-1$, then
 \begin{align*}
 	m(G)&\leq m(G^*)=\frac{1}{18}(k+2)(k+5)(3^{(n_1-1)/3-1}+1)+\frac{1}{3}(k-1)\times3^{(n_1-1)/3-1}\\
 	&<\frac{1}{18}(k+2)(k+5)\times3^{(n-k-2)/3}.
 \end{align*}

 {\it Subcase 1.2.} $n\equiv1$ (mod 3). Similarly, suppose (\ref{3.44}) attains the maximum. By Lemma \ref{le2.15},  one of the following holds:
 \begin{itemize}
 	\item[(i)] $b=a+2= r-1$ and $n_{a+1}=n_b=4$, i.e., $n_{r-2}=n_{r-1}=4$ and $n_i\equiv 0$ (mod 3) for $i=1,\ldots, r-3$.
 	\item[(ii)]  $a=b=r-2$, i.e., $n_{r-1}\equiv 2$ (mod 3), $n_i\equiv 0$ (mod 3) for $i=1,\ldots, r-2$ and $n_1+\cdots+n_{r-1}=n-k-1\equiv 2$ (mod 3).
 \end{itemize}

 Suppose (i) holds. If $a\neq0$, then by \eqref{3.44}, we have
 \begin{align*}
 	m(G)&\leq m(G^*)=\frac{4}{18}(k+2)(k+5)\prod\limits_{i=1}^{r-3}(3^{n_i/3-1}+\frac{n_i}{3}+1)+\frac{1}{3}(k-1)\prod\limits_{i=1}^{r-3}3^{n_i/3-1}\\
 	&<4\left[\frac{1}{18}(k+2)(k+5)\times3^{(n-k-9)/3}+\frac{1}{3}(k-1)\right]\tag{by Lemma \ref{lem2.10} (i)}\\
 	&<y_1.
 \end{align*}
 If $a=0$, i.e., $n=k+9$, then $y_1=(29k^2+215k+242)/18,~y_2=(31k^2+169k+34)/18$. Thus,
 \begin{align*}
 	m(G)\leq m(G^*)=\frac{2}{9}(k+2)(k+5)+\frac{1}{3}(k-1)< y_1.
 \end{align*}

 Suppose (ii) holds. If $a\neq0$, then we have
 \begin{align*}
 	m(G)&\leq m(G^*)\\&=\frac{1}{18}(k+2)(k+5)\times3^{(n_{r-1}-2)/3-1}
 \prod\limits_{i=1}^{r-2}(3^{n_i/3-1}+\frac{n_i}{3}+1)\\
 	&~~~+\frac{1}{3}(k-1)\times3^{(n_{r-1}-2)/3-1}\prod\limits_{i=1}^{r-2}3^{n_i/3-1}\\
 	&\leq\left[\frac{1}{18}(k+2)(k+5)\times3^{(n-k-1-n_{r-1})/3}+\frac{1}{3}(k-1)\right]\times3^{(n_{r-1}-2)/3-1}\tag{by Lemma \ref{lem2.10} (i)}\\
 	&<y_1.
 \end{align*}
 If $a=0$, i.e., $r=2$ and $n_{r-1}=n_1=n-k-1$, then
 \begin{align*}
 	m(G)\leq m(G^*)&= \frac{1}{18}(k+2)(k+5)\times3^{(n_1-2)/3-1}+\frac{1}{3}(k-1)\times3^{(n_1-2)/3-1}<y_1.
 \end{align*}

 {\it Subcase 1.3.} $n\equiv2$ (mod 3). Suppose (\ref{3.44}) attains the maximum. By Lemma \ref{le2.15}, we have  $a=r-1$, i.e., $n_i\equiv0$ (mod 3) for $i=1,\ldots,r-1$. By \eqref{3.44} and Lemma \ref{lem2.10} (i), we have
 \begin{align}
 	m(G)&\leq m(G^*)=\frac{1}{18}(k+2)(k+5)\prod\limits_{i=1}^{r-1}(3^{n_i/3-1}+\frac{n_i}{3}+1)+\frac{1}{3}(k-1)\prod\limits_{i=1}^{r-1}3^{n_i/3-1}\notag\\
 	&\leq\frac{1}{18}(k+2)(k+5)\times3^{(n-k-1)/3}+\frac{1}{3}(k-1)=y_4,\notag
 \end{align} with equality if and only if $n_1=\cdots =n_{r-1}=3$, i.e., $G\cong G^*\in\mathcal{G}_9$.

 {\it Case 2.}  $n_i\geq3$ for $i=1,\ldots,r$. Similarly as in Case 2 of the proof of Theorem \ref{th1.1}, we suppose (\ref{eqn1}) holds.
 By Theorem \ref{cl3.4}, $m(G)$ attains the maximum  only if $G\cong G^*\equiv\bigcup_{i=1}^rT_i\cup C_k\cup \{uv_1,\ldots,uv_r\}$ with $u\in V(C_k)$ and $T_i$ satisfying (\ref{eqn2}).
 Moreover, by Lemma \ref{CLAIM3.3} and Corollary \ref{CLAIM3.2},   we have
 \begin{align}\label{3.55}
 	m(G^{*})&=\frac{1}{18}(k+2)(k+5)\prod\limits_{i=1}^{a}(3^{n_i/3-1}+\frac{n_i}{3}+1)\prod\limits_{i=a+1}^{b}(3^{(n_i-1)/3-1}+1)\prod\limits_{i=b+1}^{r}3^{(n_i-2)/3-1}\notag\\
 	&~~~+\frac{1}{9}(k-1)(k+2)\prod\limits_{i=1}^{a}3^{n_i/3-1}\prod\limits_{i=a+1}^{b}3^{(n_i-1)/3-1}\prod\limits_{i=b+1}^{r}3^{(n_i-2)/3-1}\notag\\
 	&~~~+\frac{1}{3}(k-1)h(a)\prod\limits_{i=a+1}^{b}3^{(n_i-1)/3-1}\prod\limits_{i=b+1}^{r}3^{(n_i-2)/3-1}.
 \end{align}

 {\it Subcase 2.1.} $n\equiv0$ (mod 3). Similarly as in Case 1, suppose (\ref{3.55}) attains the maximum. Then by Lemma \ref{le2.15},   one of the following holds:
 \begin{itemize}
 	\item[(i)] $b=a+2= r$ and $n_{a+1}=n_b=4$, i.e., $n_{r-1}=n_{r}=4$ and $n_i\equiv 0$ (mod 3) for $i=1,\ldots, r-2$.
 	\item[(ii)]  $a=b=r-1$, i.e., $n_{r}\equiv 2$ (mod 3), $n_i\equiv 0$ (mod 3) for $i=1,\ldots, r-1$ and $n_1+\cdots+n_{r}=n-k\equiv 2$ (mod 3).
 \end{itemize}

 Suppose (i) holds. If $a>1$, then $4\le k\le n-14$. By (\ref{3.55}),  we have
 \begin{align}
 	m(G)&\leq m(G^*)\notag \\
 &=\frac{4}{18}(k+2)(k+5)\prod\limits_{i=1}^{r-2}(3^{n_i/3-1}+\frac{n_i}{3}+1)+\frac{1}{9}(k-1)(k+2)\prod\limits_{i=1}^{r-2}3^{n_i/3-1}\notag\\
 	&~~~+\frac{1}{3}(k-1)\Big(\prod\limits_{i=1}^{r-2}3^{n_i/3-1}+\sum_{j=1}^{r-2}\prod\limits_{1\le i\le r-2,i\neq j}3^{n_i/3-1}\Big)\notag\\
 	&\leq\frac{1}{18}(k+2)(k+5)\times3^{(n-k-8)/3}+\frac{1}{9}(k-1)(n-3)
 	+\frac{1}{6}(k+2)(k+5)\prod\limits_{i=1}^{r-2}(3^{n_i/3-1}+\frac{n_i}{3}+1)\tag{applying Lemma \ref{lem2.10} $(ii)$ with $m=n-8$ and $l=k$}\notag\\
 	&\leq\frac{4}{18}(k+2)(k+5)\times3^{(n-k-8)/3}+\frac{1}{9}(k-1)(n-3)\equiv\beta_2\notag\\
 	&<\frac{1}{18}(k+2)(k+5)\times3^{(n-k-2)/3}\equiv\alpha_2.\notag
 \end{align}The last equality  holds because $\varphi(k)=\alpha_2-\beta_2=\frac{5}{18}(k+2)(k+5)\times3^{(n-k-2)/3-2}-\frac{1}{9}(k-1)(n-3)$ is strictly decreasing when $4\le k\le n-8$, which leads to $\alpha_2-\beta_2>\varphi(n-8)>0$.

 If $a=1$, i.e., $r=3$ and $n_1=n-k-8$, then we have
 \begin{align*}
 	m(G)&\leq m(G^*)\\
 &=\frac{4}{18}(k+2)(k+5)(3^{n_1/3-1}+\frac{n_1}{3}+1)\\
 	&~~~+\frac{1}{9}(k-1)(k+2)\times3^{n_1/3-1}+\frac{1}{3}(k-1)\Big(3^{n_1/3-1}+1\Big)\\
 	&<\left[\frac{4}{18}(k+2)(k+5)+\frac{1}{9}(k-1)(k+2)+\frac{1}{3}(k-1)\right]\times3^{(n-k-8)/3}\\
 	&<\frac{1}{18}(k+2)(k+5)\times3^{(n-k-2)/3}.
 \end{align*}If $a=0$, then $n=k+8$ and
 \begin{align*}
 	m(G)&\leq m(G^*)=\frac{4}{18}(k+2)(k+5)+\frac{1}{9}(k-1)(k+2)+\frac{1}{3}(k-1)\\
 	&<\frac{1}{18}(k+2)(k+5)\times3^{(n-k-2)/3}.
 \end{align*}

 Suppose (ii) holds. If $a>1$, then
 \begin{align*}
 	m(G)&\leq m(G^*)\\
 &= \frac{1}{18}(k+2)(k+5)\times3^{(n_r-2)/3-1}\prod\limits_{i=1}^{r-1}(3^{n_i/3-1}+\frac{n_i}{3}+1)\\
 	&~~~+\frac{1}{9}(k-1)(k+2)\times3^{(n_r-2)/3-1}\prod\limits_{i=1}^{r-1}3^{n_i/3-1}\\
 	&~~~+\frac{1}{3}(k-1)\times3^{(n_r-2)/3-1}\Big(\prod\limits_{i=1}^{r-1}3^{n_i/3-1}+\sum_{j=1}^{r-1}\prod\limits_{1\le i\le r-2,i\neq j}3^{n_i/3-1}\Big)\\
 	&\leq3^{(n_r-2)/3-1}\left[\frac{1}{18}(k+2)(k+5)\times3^{(n-k-n_r)/3}+\frac{1}{9}(k-1)(n+5-n_r)\right]\tag{applying Lemma \ref{lem2.10} $(ii)$ with $m=n-n_r$ and $l=k$}\\
 	&<\frac{1}{18}(k+2)(k+5)\times3^{(n-k-2)/3}.
 \end{align*}If $a=1$, i.e., $r=2$ and $n_1+n_r=n-k$, then we have
 \begin{align*}
 	m(G)&\leq m(G^*)\\&=\frac{1}{18}(k+2)(k+5)\times3^{(n_r-2)/3-1}\Big(3^{n_1/3-1}+\frac{n_1}{3}+1\Big)\\
 	&~~~+\frac{1}{9}(k-1)(k+2)\times3^{(n_r-2)/3-1}\times3^{n_1/3-1}+\frac{1}{3}(k-1)\times3^{(n_r-2)/3-1}\Big(3^{n_1/3-1}+1\Big)\\
 	&<\frac{1}{18}(k+2)(k+5)\times3^{(n-k-2)/3-1}+\frac{1}{9}(k-1)(k+2)\times3^{(n-k-2)/3-2}
 	 +\frac{1}{3}(k-1)\times3^{(n-k-2)/3-1}\\
 	&<\frac{1}{18}(k+2)(k+5)\times3^{(n-k-2)/3}.
 \end{align*}If $a=0$, then by (\ref{3.55}) we have
 \begin{align*}
 	m(G)\leq m(G^*)=\frac{1}{6}k(k+5)\times3^{(n-k-2)/3-1}<\frac{1}{18}(k+2)(k+5)\times3^{(n-k-2)/3}.
 \end{align*}

 {\it Subcase 2.2.} $n\equiv1$ (mod 3). Similarly, suppose (\ref{3.55}) attains the maximum. By Lemma \ref{le2.15}, we have  $a=r$, i.e., $n_i\equiv0$ (mod 3) for $i=1,\ldots,r$.

 If $a\neq1$, by Lemma \ref{lem2.10} (ii) and (\ref{3.55}), we have
 \begin{align}
 	m(G)&\leq m(G^*)\\
 &=\frac{1}{18}(k+2)(k+5)\prod\limits_{i=1}^{r}(3^{n_i/3-1}+\frac{n_i}{3}+1)\notag\\
 	&~~~+\frac{1}{9}(k-1)(k+2)\prod\limits_{i=1}^{r}3^{n_i/3-1}+\frac{1}{3}(k-1)\Big(\prod\limits_{i=1}^{r}3^{n_i/3-1}+\sum_{j=1}^{r}\prod\limits_{i\neq j}3^{n_i/3-1}\Big)\notag\\
 	&\leq\frac{1}{18}(k+2)(k+5)\times3^{(n-k)/3}+\frac{1}{9}(n+5)(k-1)=y_1,\notag
 \end{align}with equality  if and only if $n_1=\cdots =n_r=3$, i.e., $G\cong G^*\in\mathcal{G}_8$.

 If $a=1$, then by  (\ref{3.55}), we have
 \begin{align}
 	m(G)&\leq m(G^*)\notag\\
 	&=\frac{1}{6}k(k+5)\times3^{(n-k)/3-1}
 +\frac{1}{3}(k-1)+\frac{1}{18}(k+2)(k+5)\left[\frac{1}{3}(n-k)+1\right] =y_2,\notag
 \end{align}with equality if and only if $G\cong G^*\in\mathcal{G}_1$.

 {\it Subcase 2.3.} $n\equiv2$ (mod 3). Again,  suppose (\ref{3.55}) attains the maximum. By Lemma \ref{le2.15}, we have $b=a+1=r$, i.e., $n_r\equiv1$ (mod 3), $n_i\equiv0$ (mod 3) for $i=1,\ldots,r-1$.

If $a=0$, then we have
 \begin{align*}
 	m(G)\leq m(G^*)&= \frac{1}{6}k(k+5)\times3^{(n-k-1)/3-1}+\frac{1}{18}(k+2)(k+5)=y_3,
 \end{align*}with equality if and only if $G\in\mathcal{G}_2$.

Now  suppose $a\ge 1$. Then we have $4\le k\le n-7$, which leads to $n\ge 11$.
Let
$$
\phi(x)=\frac{1}{18}(k+2)(k+5)\times\left[3^{(n-k-1)/3-1}+3^{(n-k-x)/3}\right]
+\frac{1}{9}(k-1)(n+5-x)\times3^{(x-1)/3-1}.$$
 One may check that $\phi'(4)<0$, $\phi'(n-k)>0$ and  $\phi''(x)>0$ for $x\in[4\text{, }n-k]$. Therefore,   $\phi(x)$ firstly decreases and then increases when $x\in[4\text{, }n-k]$.

 Notice that $\phi(n-k)=y_3$, $\phi(4)=(k+2)(k+5)\cdot3^{(n-k-1)/3-3}+(k-1)(n+1)/9$, and
 \begin{align}\label{y4y5}
 	y_4>\phi(4)~\text{ if and only if }~(k+2)(k+5)\times3^{(n-k-1)/3}>6(k-1)(n-2).
 \end{align}Since $\varphi(x)=(x+5)\times3^{(n-x-1)/3}$ is strictly decreasing when $4\le x\le n-7$, we get $$\varphi(x)\ge\varphi(n-7)=9(n-2)>6(n-2).$$ Therefore, if $4\le k\le n-7$, we have $$(k+2)(k+5)\times3^{(n-k-1)/3}>6(k-1)(n-2).$$ By (\ref{y4y5}) we have $\phi(4)<y_4$.

 If $a>1$, then we have
 \begin{align}
 	m(G)&\leq m(G^*)\\
 &= \frac{1}{18}(k+2)(k+5)(3^{(n_r-1)/3-1}+1)\prod\limits_{i=1}^{r-1}(3^{n_i/3-1}+\frac{n_i}{3}+1)\notag\\
 	&~~~+\frac{1}{9}(k-1)(k+2)\times3^{(n_r-1)/3-1}\prod\limits_{i=1}^{r-1}3^{n_i/3-1}\notag\\
 	&~~~+\frac{1}{3}(k-1)\times3^{(n_r-1)/3-1}\Big(\prod\limits_{i=1}^{r-1}3^{n_i/3-1}+\sum_{j=1}^{r-1}\prod\limits_{1\leq i\le r-1,i\neq j}3^{n_i/3-1}\Big)\notag\\
 	&\leq\left[\frac{1}{18}(k+2)(k+5)\times3^{(n-k-n_r)/3}+\frac{1}{9}(k-1)(n+5-n_r)\right]\times3^{(n_r-1)/3-1}\notag\\
 	&~~~+\frac{1}{18}(k+2)(k+5)\prod\limits_{i=1}^{r-1}(3^{n_i/3-1}+\frac{n_i}{3}+1)\tag{by Lemma \ref{lem2.10} (ii)}\\
 	&\leq\frac{1}{18}(k+2)(k+5)\times3^{(n-k-1)/3-1}+\frac{1}{9}(k-1)(n+5-n_r)\times3^{(n_r-1)/3-1}\notag\\
 	&~~~+\frac{1}{18}(k+2)(k+5)\times3^{(n-k-n_r)/3}\notag\\
 	&\leq\max\{\phi(4),\phi(n-k-6)\}<y_4.\notag
 \end{align}
 If $a=1$, i.e., $r=2$ and $n_1+n_r=n-k$, then we have
 \begin{align}
 	m(G)&\leq m(G^*)\notag\\
 &= \frac{1}{18}(k+2)(k+5)(3^{(n_r-1)/3-1}+1)(3^{n_1/3-1}+\frac{n_1}{3}+1)\notag\\
 	&~~~+\frac{1}{9}(k-1)(k+2)\times3^{(n_r-1)/3-1}\times3^{n_1/3-1}+\frac{1}{3}(k-1)\times3^{(n_r-1)/3-1}\times(3^{n_1/3-1}+1)\notag\\
 	&<\left[\frac{1}{9}(k+2)(k+5)+\frac{1}{27}(k-1)(k+2)+\frac{1}{3}(k-1)\right]\times3^{(n-k-1)/3-1}\notag\\
 	&<\frac{1}{18}(k+2)(k+5)\times3^{(n-k-1)/3}+\frac{1}{3}(k-1)=y_4.\notag
 \end{align}

Now we are ready to make the conclusion of Theorem \ref{th1.3}.
When $n\equiv0$ (mod 3), combining (\ref{equation010}), Subcase 1.1 and Subcase 2.1, we have $$m(G)\le \frac{1}{18}(k+2)(k+5)\times3^{(n-k-2)/3},$$ with equality if and only if $G\in\mathcal{G}_6\cup\mathcal{G}_7$.

 When $n\equiv1$ (mod 3), combining (\ref{equation010}), Subcase 1.2 and  Subcase 2.2, we have \begin{equation*}\label{eqh0424}
 m(G)\le\max\{y_1,y_2\},
   \end{equation*}
 with  equality   if and only if $G\in\mathcal{M}_i$ and $y_i=\max\{y_1,y_2\}$ with $i\in\{1,2\}$, where $\mathcal{M}_1=\mathcal{G}_8,~\mathcal{M}_2=\mathcal{G}_1.$

 When $n\equiv2$ (mod 3), combining (\ref{equation010}),  Subcase 1.3 and  Subcase 2.3,  we have
  \begin{equation*}\label{eqh0425}
  m(G)\le\max\{y_3,y_4\},
   \end{equation*}
  with equality   if and only if $G\in\mathcal{M}_i$ and  $y_i=\max\{y_3,y_4\}$ with $i\in\{3,4\}$, where $\mathcal{M}_3=\mathcal{G}_2,~\mathcal{M}_4=\mathcal{G}_9.$

   This completes the proof of Theorem \ref{th1.3}. \hfill\qedsymbol\\

   \par

{\it Remark.} If $n=k+1$, $\mathcal{P}(n,k)$ consists of a unique graph. If $n=k+2$, $\mathcal{P}(n,k)$ consists of two graphs $G_1$ and $G_2$ with $m(G_1)=m(G_2)$; see  Figure \ref{fig:35}.     If $n=k+3$, $\mathcal{P}(n,k)$ consists of four graphs $G_3$, $G_4$, $G_5$ and $G_6$; see Figure \ref{fig:35}. A direct computation shows that    $m(G_3)=m(G_4)$ attains the maximum for the case $k\equiv 0$ (\mod 3), and  $m(G_3)$ attains the maximum for the case $k\not\equiv 0$ (\mod 3).
\begin{figure}[H]
	\centering
	\subfigure{
		\begin{tikzpicture}[scale=0.7]
			\draw[fill=black](0,0.8) circle(0.08)
			(0,1.6) circle(0.08)
			(0.6,1.6) circle(0.08);
			\draw[fill=black](0.565685425,0.565685425) circle(0.08)
			(-0.565685425,0.565685425) circle(0.08);
			\draw(0,0.8)--(0,1.6)
			(0,1.6)--(0.6,1.6);
			\draw[dashed](0,0) circle(0.8);
			\draw(0.565685425,0.565685425) arc(45:135:0.8);
			\node at (0,-1.3){$G_1$};
		\end{tikzpicture}
	}\hspace{0.5cm}
	\subfigure{
		\begin{tikzpicture}[scale=0.7]
			\draw[fill=black](0,0.8) circle(0.08)
			(-0.6,1.6) circle(0.08)
			(0.6,1.6) circle(0.08);
			\draw[fill=black](0.565685425,0.565685425) circle(0.08)
			(-0.565685425,0.565685425) circle(0.08);
			\draw(0,0.8)--(-0.6,1.6)
			(0.6,1.6)--(0,0.8);
			\draw[dashed](0,0) circle(0.8);
			\draw(0.565685425,0.565685425) arc(45:135:0.8);
			\node at (0,-1.3){$G_2$};
		\end{tikzpicture}
	}\hspace{0.5cm}
	\subfigure{
		\begin{tikzpicture}[scale=0.7]
			\draw[fill=black](0,0.8) circle(0.08)
			(0,1.6) circle(0.08)
			(0.6,1.6) circle(0.08)
			(1.2,1.6) circle(0.08);
			\draw[fill=black](0.565685425,0.565685425) circle(0.08)
			(-0.565685425,0.565685425) circle(0.08);
			\draw(0,0.8)--(0,1.6)
		     (0,1.6)--(1.2,1.6);
			\draw[dashed](0,0) circle(0.8);
			\draw(0.565685425,0.565685425) arc(45:135:0.8);
			\node at (0,-1.3){$G_3$};
		\end{tikzpicture}
	}\hspace{0.5cm}
	\subfigure{
		\begin{tikzpicture}[scale=0.7]
			\draw[fill=black](0,0.8) circle(0.08)
			(0,1.6) circle(0.08)
			(0.6,1.6) circle(0.08)
			(-0.6,1.6) circle(0.08);
			\draw[fill=black](0.565685425,0.565685425) circle(0.08)
			(-0.565685425,0.565685425) circle(0.08);
			\draw(0,0.8)--(0,1.6)
			(0.6,1.6)--(-0.6,1.6);
			\draw[dashed](0,0) circle(0.8);
			\draw(0.565685425,0.565685425) arc(45:135:0.8);
			\node at (0,-1.3){$G_4$};
		\end{tikzpicture}
	}\hspace{0.5cm}
	\subfigure{
		\begin{tikzpicture}[scale=0.7]
			\draw[fill=black](0,0.8) circle(0.08)
			(-0.6,1.6) circle(0.08)
			(0.6,1.6) circle(0.08)
			(0,1.6) circle(0.08);
			\draw[fill=black](0.565685425,0.565685425) circle(0.08)
			(-0.565685425,0.565685425) circle(0.08);
			\draw(0,0.8)--(-0.6,1.6)
			(0,1.6)--(0,0.8)
			(0.6,1.6)--(0,1.6);
			\draw[dashed](0,0) circle(0.8);
			\draw(0.565685425,0.565685425) arc(45:135:0.8);
			\node at (0,-1.3){$G_5$};
		\end{tikzpicture}
	}\hspace{0.5cm}
	\subfigure{
		\begin{tikzpicture}[scale=0.7]
			\draw[fill=black](0,0.8) circle(0.08)
			(-0.6,1.6) circle(0.08)
			(0.6,1.6) circle(0.08)
			(0,1.6) circle(0.08);
			\draw[fill=black](0.565685425,0.565685425) circle(0.08)
			(-0.565685425,0.565685425) circle(0.08);
			\draw(0,0.8)--(-0.6,1.6)
			(0,1.6)--(0,0.8)
			(0.6,1.6)--(0,0.8);
			\draw[dashed](0,0) circle(0.8);
			\draw(0.565685425,0.565685425) arc(45:135:0.8);
			\node at (0,-1.3){$G_6$};
		\end{tikzpicture}
	}
		\caption{The potted graphs of order $n\le k+3$ }
		\label{fig:35}
	\end{figure}
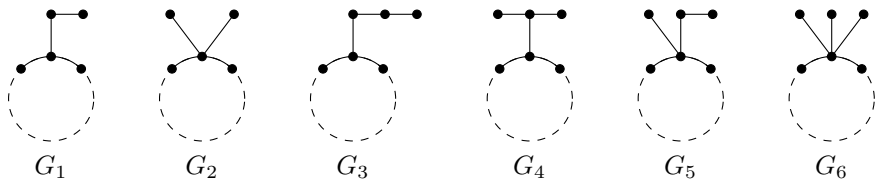

\section{Conclusion}

We determine the maximum number of maximum dissociation sets in potted graphs with a given cycle as well as the extremal graphs attaining this maximum number. It would be interesting to consider the same problem on unicyclic graphs, which we leave for future works.

\section*{Acknowledgement}
This work was supported by the National Natural Science Foundation of China (No. 12171323),  Guangdong Basic and Applied Basic Research
Foundation (No. 2022A1515011995) and the Science and  Technology Foundation of Shenzhen City (No. JCYJ20210324095813036).

}
\begin{appendix}
\section*{\bf Appendix A: the proofs of Lemma \ref{lem2.9} and Lemma \ref{lem2.10}}

{\it Proof of Lemma \ref{lem2.9}.}~~
$(i)$ Let $t= (m-l-1)/3$. We use induction on $t$. The result is obvious for $t=2$. Now we assume $t\ge 3$ and the result is true for $t-1$. If $i=t$, we have $i=t$ and $k_1=\cdots=k_i=1$.   By direct computation we have (\ref{2.5}).

Next we suppose $i<t$. Then there exists some $j\in\{1,2,\ldots,i\}$ such that $k_j\geq2$. Without loss of generality,  we may assume $k_1\geq2$.  Let $k_1^{'}=k_1-1$. Then $k'_1+k_2+\cdots+k_i=t-1$. By the induction hypothesis, we have
\begin{align*}
	\frac{1}{3}(l+1)\prod\limits_{j=1}^if(k_j)+\prod\limits_{j=1}^ig(k_j)&=\frac{1}{3}(l+1)\left[3f(k_1^{'})-(2k_1^{'}+1)\right]\prod\limits_{j=2}^if(k_j)+3g(k_1^{'})\prod\limits_{j=2}^ig(k_j)\\
	&\leq3\left[\frac{1}{3}(l+1)\cdot  g(t)+1\right]-\frac{1}{3}(l+1)(2k_1^{'}+1)\prod\limits_{j=2}^if(k_j)\\
	&\leq\frac{1}{3}(l+1)\cdot g(t+1)+3-\frac{1}{3}(l+1)(2t-1)\tag{by (\ref{2.3})}\\
	&<\frac{1}{3}(l+1)\cdot g(t+1)+1.
\end{align*}

    $(ii)$ Let $t=(m-l)/3$. We use induction on $t$. A direct computation verifies the case $t=2$. Now we assume $t\ge 3$ and the result is true for $t-1$. If $i=t$, we have $k_1=\cdots=k_i=1$. Again, by a direct computation we have the equality in (\ref{2.4}).

Next we suppose $i<t$. Then there exists some $j\in\{1,2,\ldots,i\}$ such that $k_j\geq2$. Without loss of generality,  we may assume $k_1\geq2$.  Let $k_1^{'}=k_1-1$. Then $k'_1+k_2+\cdots+k_i=t-1$. By the induction hypothesis, we have
    \begin{align*}
        &\frac{1}{3}(l+1)\prod\limits_{j=1}^if(k_j)+\frac{1}{3}(2l-1)\prod\limits_{j=1}^ig(k_j)+\sum\limits_{r=1}^i\prod\limits_{j\neq r}g(k_j)\\
        &=\frac{1}{3}(l+1)\times\left[3f(k_1^{'})-(2k_1^{'}+1)\right]\prod\limits_{j=2}^if(k_j)+\frac{1}{3}(2l-1)\times3g(k_1^{'})\prod\limits_{j=2}^ig(k_j)\\
        &\quad~~+3\left[g(k_1^{'})\sum\limits_{r=2}^i\prod\limits_{j\neq r,1}g(k_j)+\prod\limits_{j\neq1}g(k_j)\right]-2\prod\limits_{j\neq1}g(k_j)\\
        &\leq3\left[\frac{1}{3}(l+1)\cdot g(t)+\frac{2}{3}(l-2)+t\right]-\frac{1}{3}(l+1)(2k_1^{'}+1)\prod\limits_{j=2}^if(k_j)-2\prod\limits_{j\neq 1}g(k_j)\\
        &\leq\frac{1}{3}(l+1)\cdot g(t+1)+2(l-2)+3t-\frac{1}{3}(l+1)(2t-1)-2\tag{by (\ref{2.3})}\\
        &=\frac{1}{3}(l+1)\cdot g(t+1)+\frac{2}{3}(l-2)+t+1+\frac{4}{3}(l-2)+(2t-1)\left[1-\frac{1}{3}(l+1)\right]-2\\
        &<\frac{1}{3}(l+1)\cdot g(t+1)+\frac{2}{3}(l-2)+t+1\\
        &=\frac{1}{3}(l+1)\cdot g\left(\frac{1}{3}(m-l)+1\right)+\frac{1}{3}(m+l-1).
    \end{align*}\hfill\qedsymbol
   \par

 {\it Proof of Lemma \ref{lem2.10}.}
 $(i)$ Let $t= (m-l-1)/3$. We use induction on $t$. A direct computation verifies the case  $t=2$. Now we assume $t\ge 3$ and the result is true for $t-1$. If $i=t$, we have  $k_1=\cdots=k_i=1$.   By direct computation we have the equality  in (\ref{2.7}).

 Next we suppose $i<t$. Then there exists some $j\in\{1,2,\ldots,i\}$ such that $k_j\geq2$. Without loss of generality,  we may assume $k_1\geq2$.  Let $k_1^{'}=k_1-1$. Then $k'_1+k_2+\cdots+k_i=t-1$. By the induction hypothesis, we have
 \begin{align*}
 	&\frac{1}{18}(l+2)(l+5)\prod\limits_{j=1}^if(k_j)+\frac{1}{3}(l-1)\prod\limits_{j=1}^ig(k_j)\\
 	&=\frac{1}{18}(l+2)(l+5)\left[3f(k_1^{'})-(2k^{'}_1+1)\right]\prod\limits_{j=2}^if(k_j)+\frac{1}{3}(l-1)\times3g(k_1^{'})\prod\limits_{j=2}^ig(k_j)\\
 	&\leq3\left[\frac{1}{18}(l+2)(l+5)\cdot g(t)+\frac{1}{3}(l-1)\right]-\frac{1}{18}(l+2)(l+5)(2k_1^{'}+1)\prod\limits_{j=2}^if(k_j)\\
 	&\leq\frac{1}{18}(l+2)(l+5)\cdot g(t+1)+l-1-\frac{1}{18}(l+2)(l+5)\\
 	&<\frac{1}{18}(l+2)(l+5)\cdot g(t+1)+\frac{1}{3}(l-1).
 \end{align*}

    $(ii)$ Let $t=(m-l)/3$. We use induction on $t$. A direct computation verifies the case $t=2$. Now we assume $t\ge 3$ and the result is true for $t-1$. If $i=t$, we have $k_1=\cdots=k_i=1$. By a direct computation we have the equality in (\ref{2.6}).

Next we suppose $i<t$. Then there exists some $j\in\{1,2,\ldots,i\}$ such that $k_j\geq2$. Without loss of generality,  we may assume $k_1\geq2$.  Let $k_1^{'}=k_1-1$. Then $k'_1+k_2+\cdots+k_i=t-1$. By the induction hypothesis, we have
    \begin{align*}
        &\frac{1}{18}(l+2)(l+5)\prod\limits_{j=1}^if(k_j)+\frac{1}{9}(l-1)(l+2)\prod\limits_{j=1}^ig(k_j)+\frac{1}{3}(l-1)\left[\prod\limits_{j=1}^ig(k_j)+\sum\limits_{r=1}^i\prod\limits_{j\neq r}g(k_j)\right]\\
        &=\frac{1}{18}(l+2)(l+5)\left[3f(k_1^{'})-(2k_1^{'}+1)\right]\prod\limits_{j=2}^if(k_j)+\frac{1}{9}(l-1)(l+2)\times3g(k_1^{'})\prod\limits_{j=2}^ig(k_j)\\
        &\quad~~+\frac{1}{3}(l-1)\left[3g(k_1^{'})\prod\limits_{j=2}^ig(k_j)+3\Big(g(k_1^{'})\sum\limits_{r=2}^i\prod\limits_{j\neq r,1}g(k_j)+\prod\limits_{j=2}^ig(k_j)\Big)-2\prod\limits_{j=2}^ig(k_j)\right]\\
           &\leq3\left[\frac{1}{18}(l+2)(l+5)\cdot g(t)+\frac{1}{9}(l-1)(l+2)+\frac{1}{3}(l-1)t\right]\\
           &\quad~~-\frac{1}{18}(l+2)(l+5)(2k_1^{'}+1)\prod\limits_{j=2}^if(k_j)
           -\frac{2}{3}(l-1)\prod\limits_{j=2}^ig(k_j)\\
           &\leq\frac{1}{18}(l+2)(l+5)\cdot g(t+1)+\frac{1}{3}(l-1)(l+2)+(l-1)t-\frac{1}{18}(l+2)(l+5)(2t-1)-\frac{2}{3}(l-1)\tag{by (\ref{2.3})}\\
           &=\frac{1}{18}(l+2)(l+5)\cdot g(t+1)+\frac{1}{9}(l-1)(l+2)+\frac{1}{3}(l-1)(t+1)\\
           &\quad~~+\frac{2}{9}(l-1)(l+2)+(2t-1)\left[\frac{1}{3}(l-1)-\frac{1}{18}(l+2)(l+5)\right]-\frac{2}{3}(l-1)\\
           &<\frac{1}{18}(l+2)(l+5)\cdot g(t+1)+\frac{1}{9}(l-1)(l+2)+\frac{1}{3}(l-1)(t+1)\\
           &=\frac{1}{18}(l+2)(l+5)\cdot g\left(\frac{1}{3}(m-l)+1\right)+\frac{1}{9}(m+5)(l-1).
    \end{align*}\hfill\qedsymbol
\section*{Appendix B: The values of $\overline{m}(T-v)$ and $\overline{m}(T-N(v))$ for some graphs}

In this section, we list all the possibilities of $\overline{m}(T-v)$ and $\overline{m}(T-N(v))$ for a tree $T\in \mathcal{H}\cup \left(\cup_{i=2}^5\mathcal{T}_i\right)$. We partially label  the vertices of $T$ as follows, in which we divide the trees in $\mathcal{H}$ into three types: (a), (b), (c).
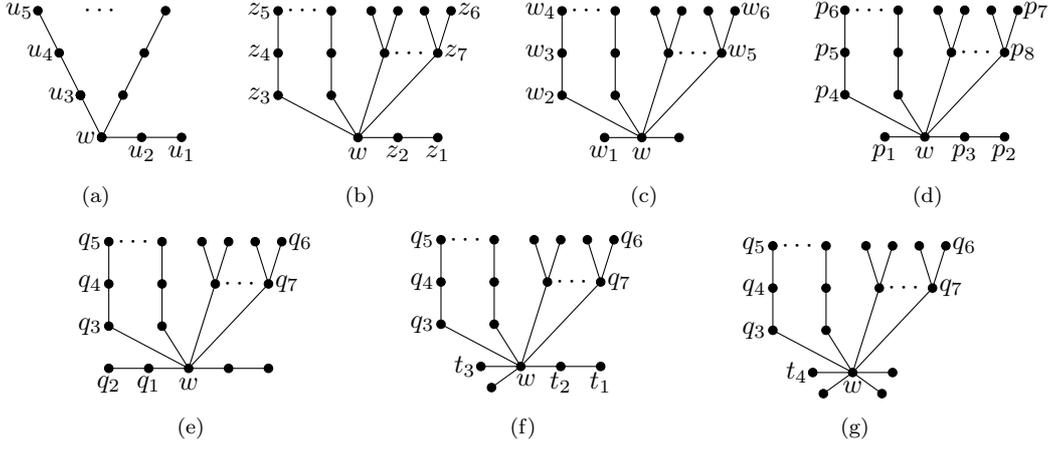
\begin{figure}[H]
	\centering
	\subfigure[]{
		\begin{tikzpicture}[scale=0.7]
			\draw[fill=black](0.8,3.2) circle(0.08)
			(0,1.6) circle(0.08)
			(0.4,2.4) circle(0.08)
			(-0.4,2.4) circle(0.08)
			(0,1.6) circle(0.08)
			(0.75,1.6) circle(0.08)
			(1.5,1.6) circle(0.08)
			(-0.8,3.2) circle(0.08)
			(1.2,4) circle(0.08)
			(-1.2,4) circle(0.08)
			(-0.4,2.4) circle(0.08);
			\draw(0,1.6)--(1.2,4)
			(0,1.6)--(1.5,1.6)
			(0,1.6)--(-1.2,4);
			\node at(0,4){$\cdots$};
			\node at (-0.3,1.6){$w$};
			\node at (0.75,1.3){$u_2 $};
			\node at (1.5,1.3){$u_1 $};
			\node at (-0.75,2.4){$u_3$};
			\node at (-1.15,3.2){$u_4 $};
			\node at (-1.55,4){$u_5 $};
		\end{tikzpicture}
	}\hspace{0.1cm}
	\subfigure[]{
		\begin{tikzpicture}[scale=0.7]
			\draw[fill=black](0,0) circle(0.08)
			(-0.5,0.8) circle(0.08)
			(-0.5,1.6) circle(0.08)
			(-0.5,2.4) circle(0.08)
			(-1.5,0.8) circle(0.08)
			(-1.5,1.6) circle(0.08)
			(-1.5,2.4) circle(0.08)
			(0.5,1.6) circle(0.08)
			(1.5,1.6) circle(0.08)
			(0.25,2.4) circle(0.08)
			(0.75,2.4) circle(0.08)
			(1.25,2.4) circle(0.08)
			(1.75,2.4) circle(0.08)
			(0.75,0) circle(0.08)
			(1.5,0) circle(0.08);
			\draw(0,0)--(1.5,0)
			(0,0)--(-1.5,0.8)
			(0,0)--(-0.5,0.8)
			(0,0)--(0.5,1.6)
			(0,0)--(1.5,1.6)
			(0.5,1.6)--(0.25,2.4)
			(0.5,1.6)--(0.75,2.4)
			(1.5,1.6)--(1.25,2.4)
			(1.5,1.6)--(1.75,2.4)
			(-1.5,0.8)--(-1.5,2.4)
			(-0.5,2.4)--(-0.5,0.8);
			\node at(-1,2.4){$\cdots$};
			\node at(1,1.6){$\cdots$};
			\node at (0,-0.3){$w$};
			\node at (0.75,-0.3){$z_2$};
			\node at (1.5,-0.3){$z_1$};
			\node at (-1.85,0.8){$z_3$};
			\node at (-1.85,1.6){$z_4$};
			\node at (-1.85,2.4){$z_5$};
			\node at (1.85,1.6){$z_7$};
			\node at (2.1,2.4){$z_6$};
		\end{tikzpicture}
	}\hspace{0.1cm}
	\subfigure[]{\begin{tikzpicture}[scale=0.7]
			\draw[fill=black](0,0) circle(0.08)
			(-0.5,0.8) circle(0.08)
			(-0.5,1.6) circle(0.08)
			(-0.5,2.4) circle(0.08)
			(-1.5,0.8) circle(0.08)
			(-1.5,1.6) circle(0.08)
			(-1.5,2.4) circle(0.08)
			(0.5,1.6) circle(0.08)
			(1.5,1.6) circle(0.08)
			(0.25,2.4) circle(0.08)
			(0.75,2.4) circle(0.08)
			(1.25,2.4) circle(0.08)
			(1.75,2.4) circle(0.08)
			(-0.7,0) circle(0.08)
			(0.7,0) circle(0.08);
			\draw(0,0)--(0.7,0)
			(0,0)--(-0.7,0)
			(0,0)--(-1.5,0.8)
			(0,0)--(-0.5,0.8)
			(0,0)--(0.5,1.6)
			(0,0)--(1.5,1.6)
			(0.5,1.6)--(0.25,2.4)
			(0.5,1.6)--(0.75,2.4)
			(1.5,1.6)--(1.25,2.4)
			(1.5,1.6)--(1.75,2.4)
			(-1.5,0.8)--(-1.5,2.4)
			(-0.5,2.4)--(-0.5,0.8);
			\node at(-1,2.4){$\cdots$};
			\node at(1,1.6){$\cdots$};
			\node at(0,-0.3){$w$};
			\node at(-0.7,-0.3){$w_1$};
			\node at(-1.9,0.8){$w_2$};
			\node at(-1.9,1.6){$w_3$};
			\node at(-1.9,2.4){$w_4$};
			\node at(1.9,1.6){$w_5$};
			\node at(2.15,2.4){$w_6$};
	\end{tikzpicture}}\hspace{0.1cm}
\subfigure[]{
	\begin{tikzpicture}[scale=0.7]
		\draw[fill=black](0,0) circle(0.08)
		(-0.5,0.8) circle(0.08)
		(-0.5,1.6) circle(0.08)
		(-0.5,2.4) circle(0.08)
		(-1.5,0.8) circle(0.08)
		(-1.5,1.6) circle(0.08)
		(-1.5,2.4) circle(0.08)
		(0.5,1.6) circle(0.08)
		(1.5,1.6) circle(0.08)
		(0.25,2.4) circle(0.08)
		(0.75,2.4) circle(0.08)
		(1.25,2.4) circle(0.08)
		(1.75,2.4) circle(0.08)
		(-0.75,0) circle(0.08)
		(0.75,0) circle(0.08)
		(1.5,0) circle(0.08);
		\draw(0,0)--(1.5,0)
		(0,0)--(-0.75,0)
		(0,0)--(-1.5,0.8)
		(0,0)--(-0.5,0.8)
		(0,0)--(0.5,1.6)
		(0,0)--(1.5,1.6)
		(0.5,1.6)--(0.25,2.4)
		(0.5,1.6)--(0.75,2.4)
		(1.5,1.6)--(1.25,2.4)
		(1.5,1.6)--(1.75,2.4)
		(-1.5,0.8)--(-1.5,2.4)
		(-0.5,2.4)--(-0.5,0.8);
		\node at(-1,2.4){$\cdots$};
		\node at(1,1.6){$\cdots$};
		\node at (0,-0.3){$w$};
		\node at (-0.75,-0.3){$p_1$};
		\node at (0.75,-0.3){$p_3$};
		\node at (1.5,-0.3){$p_2$};
		\node at (-1.82,0.8){$p_4$};
		\node at (-1.82,1.6){$p_5$};
		\node at (-1.82,2.4){$p_6$};
		\node at (2.1,2.4){$p_7$};
		\node at (1.85,1.6){$p_8$};
	\end{tikzpicture}
}\hspace{0.1cm}
	\subfigure[]{
		\begin{tikzpicture}[scale=0.7]
			\draw[fill=black](0,0) circle(0.08)
			(-0.5,0.8) circle(0.08)
			(-0.5,1.6) circle(0.08)
			(-0.5,2.4) circle(0.08)
			(-1.5,0.8) circle(0.08)
			(-1.5,1.6) circle(0.08)
			(-1.5,2.4) circle(0.08)
			(0.5,1.6) circle(0.08)
			(1.5,1.6) circle(0.08)
			(0.25,2.4) circle(0.08)
			(0.75,2.4) circle(0.08)
			(1.25,2.4) circle(0.08)
			(1.75,2.4) circle(0.08)
			(-0.75,0) circle(0.08)
			(0.75,0) circle(0.08)
			(1.5,0) circle(0.08)
			(-1.5,0) circle(0.08);
			\draw(-1.5,0)--(1.5,0)
			(0,0)--(-1.5,0.8)
			(0,0)--(-0.5,0.8)
			(0,0)--(0.5,1.6)
			(0,0)--(1.5,1.6)
			(0.5,1.6)--(0.25,2.4)
			(0.5,1.6)--(0.75,2.4)
			(1.5,1.6)--(1.25,2.4)
			(1.5,1.6)--(1.75,2.4)
			(-1.5,0.8)--(-1.5,2.4)
			(-0.5,2.4)--(-0.5,0.8);
			\node at(-1,2.4){$\cdots$};
			\node at(1,1.6){$\cdots$};
			\node at (0,-0.3){$w$};
			\node at (-0.75,-0.3){$q_1$};
			\node at (-1.5,-0.3){$q_2$};
			\node at (-1.85,0.8){$q_3$};
			\node at (-1.85,1.6){$q_4$};
			\node at (-1.85,2.4){$q_5$};
			\node at (1.85,1.6){$q_7$};
			\node at (2.1,2.4){$q_6$};
		\end{tikzpicture}
	}\hspace{0.7cm}
	\subfigure[]{
		\begin{tikzpicture}[scale=0.7]
			\draw[fill=black](0,0) circle(0.08)
			(-0.5,0.8) circle(0.08)
			(-0.5,1.6) circle(0.08)
			(-0.5,2.4) circle(0.08)
			(-1.5,0.8) circle(0.08)
			(-1.5,1.6) circle(0.08)
			(-1.5,2.4) circle(0.08)
			(0.5,1.6) circle(0.08)
			(1.5,1.6) circle(0.08)
			(0.25,2.4) circle(0.08)
			(0.75,2.4) circle(0.08)
			(1.25,2.4) circle(0.08)
			(1.75,2.4) circle(0.08)
			(-0.55,-0.4) circle(0.08)
			(0.75,0) circle(0.08)
			(1.5,0) circle(0.08)
			(-0.75,0) circle(0.08);
			\draw(0,0)--(1.5,0)
			(0,0)--(-0.55,-0.4)
			(0,0)--(-1.5,0.8)
			(0,0)--(-0.5,0.8)
			(0,0)--(0.5,1.6)
			(0,0)--(1.5,1.6)
			(0.5,1.6)--(0.25,2.4)
			(0.5,1.6)--(0.75,2.4)
			(1.5,1.6)--(1.25,2.4)
			(1.5,1.6)--(1.75,2.4)
			(-1.5,0.8)--(-1.5,2.4)
			(-0.5,2.4)--(-0.5,0.8)
			(-0.75,0)--(0,0);
			\node at(-1,2.4){$\cdots$};
			\node at(1,1.6){$\cdots$};
			\node at (0.1,-0.25){$w$};
			\node at (-1.85,0.8){$q_3$};
			\node at (-1.85,1.6){$q_4$};
			\node at (-1.85,2.4){$q_5$};
			\node at (1.85,1.6){$q_7$};
			\node at (2.1,2.4){$q_6$};
			\node at (0.75,-0.3){$t_2$};
			\node at (1.5,-0.3){$t_1$};
			\node at (-1.05,0){$t_3$};
		\end{tikzpicture}
	}\hspace{0.7cm}
	\subfigure[]{
		\begin{tikzpicture}[scale=0.7]
			\draw[fill=black](0,0) circle(0.08)
			(-0.5,0.8) circle(0.08)
			(-0.5,1.6) circle(0.08)
			(-0.5,2.4) circle(0.08)
			(-1.5,0.8) circle(0.08)
			(-1.5,1.6) circle(0.08)
			(-1.5,2.4) circle(0.08)
			(0.5,1.6) circle(0.08)
			(1.5,1.6) circle(0.08)
			(0.25,2.4) circle(0.08)
			(0.75,2.4) circle(0.08)
			(1.25,2.4) circle(0.08)
			(1.75,2.4) circle(0.08)
			(-0.55,-0.4) circle(0.08)
			(0.55,-0.4) circle(0.08)
			(0.75,0) circle(0.08)
			(-0.75,0) circle(0.08);
			\draw(0,0)--(0.55,-0.4)
			(0,0)--(-0.55,-0.4)
			(0,0)--(-1.5,0.8)
			(0,0)--(-0.5,0.8)
			(0,0)--(0.5,1.6)
			(0,0)--(1.5,1.6)
			(0.5,1.6)--(0.25,2.4)
			(0.5,1.6)--(0.75,2.4)
			(1.5,1.6)--(1.25,2.4)
			(1.5,1.6)--(1.75,2.4)
			(-1.5,0.8)--(-1.5,2.4)
			(-0.5,2.4)--(-0.5,0.8)
			(-0.75,0)--(0.75,0);
			\node at(-1,2.4){$\cdots$};
			\node at(1,1.6){$\cdots$};
			\node at (0,-0.27){$w$};
			\node at (-1.85,0.8){$q_3$};
			\node at (-1.85,1.6){$q_4$};
			\node at (-1.85,2.4){$q_5$};
			\node at (1.85,1.6){$q_7$};
			\node at (2.1,2.4){$q_6$};
			\node at (-1.05,0){$t_4$};
		\end{tikzpicture}
	}
	\caption{Seven labeled trees $T$ of order $n$}
	\label{fig:enter-label}
\end{figure}

By symmetry of the vertices,  we have the following possibilities for the values  of $\overline{m}(T-v)$ and $\overline{m}(T-N(v))$ in Figure \ref{fig:enter-label}.\vspace{-0.4cm}
\begin{table}[H]
	\renewcommand\arraystretch{1.65}
	\begin{center}
		\caption{The values of $\overline{m}(T-v)$ and $\overline{m}(T-N(v))$ for Figure \ref{fig:enter-label} (a).}\vspace{0.25cm}
		\label{table:1}
		\begin{tabular}{|c|c|c|c|c|c|c|}
			\hline   \text{$v$}&$w$ & \text{$u_1 $} & \text{$u_2 $} & \text{$u_3 $}& \text{$u_4 $}& \text{$u_5$}\\
			\hline   $\overline{m}(T-v)$&$3^{n/3-1}$ & 1 & $n/3$ & $3^{n/3-2}+n/3$ & $3^{n/3-2}+1$ & $3^{n/3-2}$\\
			\hline   $\overline{m}(T-N(v))$&1 & $n/3$ & 0 &  $3^{n/3-2}$ & 0 & $3^{n/3-2}+1$\\
			\hline
		\end{tabular}
	\end{center}\vspace{-0.7cm}
\end{table}
\begin{table}[H]
	\renewcommand\arraystretch{1.65}
	\begin{center}
		\caption{The values of $\overline{m}(T-v)$ and $\overline{m}(T-N(v))$ for Figure \ref{fig:enter-label} (b).}\vspace{0.25cm}
		\label{EQUA00}
		\begin{tabular}{|c|c|c|c|c|c|c|c|c|}
			\hline   \text{$v=$}&\text{$w$} & \text{$z_1$} & \text{$z_2$} & \text{$z_3$}& \text{$z_4$}& \text{$z_5$} &\text{$z_6$} & \text{$z_7$} \\
			\hline   $\overline{m}(T-v)$&$3^{\frac{n}{3}-1}$ & 1 & $<\frac{n}{3}$ & $3^{\frac{n}{3}-2}+2$ & $3^{\frac{n}{3}-2}+1$ & $3^{\frac{n}{3}-2}$ & $3^{\frac{n}{3}-2}$ & $\leq3^{\frac{n}{3}-2}+\frac{n}{3}$ \\
			\hline   $\overline{m}(T-N(v))$&1 & $<\frac{n}{3}$ & 0 & $3^{\frac{n}{3}-2}$ & 0 & $3^{\frac{n}{3}-2}+1$ & $\leq3^{\frac{n}{3}-2}+\frac{n}{3}$ & 0 \\
			\hline
		\end{tabular}
	\end{center}\vspace{-0.7cm}
\end{table}
\begin{table}[H]
	\renewcommand\arraystretch{1.65}
	\begin{center}
		\caption{The values of $\overline{m}(T-v)$ and $\overline{m}(T-N(v))$ for Figure \ref{fig:enter-label} (c).}\vspace{0.25cm}
		\label{table:0}
		\begin{tabular}{|c|c|c|c|c|c|c|c|}
			\hline   \text{$v$}&$w$ & \text{$w_1 $} & \text{$w_2 $} & \text{$w_3 $}& \text{$w_4 $}& \text{$w_5$}& \text{$w_6$}\\
			\hline   $\overline{m}(T-v)$&$3^{n/3-1}$ & 1 & $3^{n/3-2}+2$ & $3^{n/3-2}$ & $3^{n/3-2}$ & $3^{n/3-2}+2$&$3^{n/3-2}$\\
			\hline   $\overline{m}(T-N(v))$&0 & $3^{n/3-1}$ & $3^{n/3-2}$ & 0 & $3^{n/3-2}$ & 0&$3^{n/3-2}+2$\\
			\hline
		\end{tabular}
	\end{center}\vspace{-0.7cm}
\end{table}
\begin{table}[H]
	\renewcommand\arraystretch{1.65}
	\begin{center}
		\caption{The values of $\overline{m}(T-v)$ and $\overline{m}(T-N(v))$ for Figure \ref{fig:enter-label} (d).}\vspace{0.25cm}
		\label{EQUA0}
		\begin{tabular}{|c|c|c|c|c|c|c|c|c|c|}
			\hline   \text{$v=$}&\text{$w$} & \text{$p_1$} & \text{$p_2$} & \text{$p_3$}& \text{$p_4$}& \text{$p_5$} &\text{$p_6$} & \text{$p_7$} & \text{$p_8$}\\
			\hline   $\overline{m}(T-v)$&$3^{\frac{n-1}{3}-1}$ & 0 & 0 & 1 & $3^{\frac{n-1}{3}-2}+1$ & $3^{\frac{n-1}{3}-2}$ & $3^{\frac{n-1}{3}-2}$ & $3^{\frac{n-1}{3}-2}$ & $3^{\frac{n-1}{3}-2}+1$\\
			\hline   $\overline{m}(T-N(v))$&0 & $3^{\frac{n-1}{3}-1}$ & 1 & 0 &  $3^{\frac{n-1}{3}-2}$ & 0 & $3^{\frac{n-1}{3}-2}$ & $3^{\frac{n-1}{3}-2}+1$ &0\\
			\hline
		\end{tabular}
	\end{center}\vspace{-0.7cm}
\end{table}
\begin{table}[H]
	\renewcommand\arraystretch{1.65}
	\begin{center}
		\caption{The values of $\overline{m}(T-v)$ and $\overline{m}(T-N(v))$ for Figure \ref{fig:enter-label} (e).}\vspace{0.25cm}
		\label{EQU}
		\begin{tabular}{|c|c|c|c|c|c|c|c|c|}
			\hline   \text{$v=$}&\text{$w$} & \text{$q_1$} & \text{$q_2$} & \text{$q_3$}& \text{$q_4$}& \text{$q_5$} &\text{$q_6$} & \text{$q_7$}\\
			\hline   $\overline{m}(T-v)$&$3^{\frac{n-2}{3}-1}$ & 0 & 0 & $3^{\frac{n-2}{3}-2}$ & $3^{\frac{n-2}{3}-2}$ & $3^{\frac{n-2}{3}-2}$ & $3^{\frac{n-2}{3}-2}$ & $3^{\frac{n-2}{3}-2}$ \\
			\hline   $\overline{m}(T-N(v))$&0 & 0 & 0 & $3^{\frac{n-2}{3}-2}$ &  0 & $3^{\frac{n-2}{3}-2}$ & $3^{\frac{n-1}{3}-2}$ & 0\\
			\hline
		\end{tabular}
	\end{center}\vspace{-0.7cm}
\end{table}
\begin{table}[H]
	\renewcommand\arraystretch{1.65}
	\begin{center}
		\caption{The values of $\overline{m}(T-v)$ and $\overline{m}(T-N(v))$ for Figure \ref{fig:enter-label} (f).}\vspace{0.25cm}
		\label{EQUf}
		\begin{tabular}{|c|c|c|c|c|c|c|c|c|c|}
			\hline   \text{$v=$}&\text{$w$} & \text{$t_1$} & \text{$t_2$} &\text{$t_3$} & \text{$q_3$}& \text{$q_4$}& \text{$q_5$} &\text{$q_6$} & \text{$q_7$}\\
			\hline   $\overline{m}(T-v)$&$3^{\frac{n-2}{3}-1}$ & 0 & 0 &0 & $3^{\frac{n-2}{3}-2}$ & $3^{\frac{n-2}{3}-2}$ & $3^{\frac{n-2}{3}-2}$ & $3^{\frac{n-2}{3}-2}$ & $3^{\frac{n-2}{3}-2}$ \\
			\hline   $\overline{m}(T-N(v))$&0 & 0 & 0 &$3^{\frac{n-2}{3}-1}$ & $3^{\frac{n-2}{3}-2}$ &  0 & $3^{\frac{n-2}{3}-2}$ & $3^{\frac{n-1}{3}-2}$ & 0\\
			\hline
		\end{tabular}
	\end{center}\vspace{-0.7cm}
\end{table}
\begin{table}[H]
	\renewcommand\arraystretch{1.65}
	\begin{center}
		\caption{The values of $\overline{m}(T-v)$ and $\overline{m}(T-N(v))$ for Figure \ref{fig:enter-label} (g).}\vspace{0.25cm}
		\label{EQUg}
		\begin{tabular}{|c|c|c|c|c|c|c|c|}
			\hline   \text{$v=$}&\text{$w$} & \text{$t_4$} & \text{$q_3$}& \text{$q_4$}& \text{$q_5$} &\text{$q_6$} & \text{$q_7$}\\
			\hline   $\overline{m}(T-v)$&$3^{\frac{n-2}{3}-1}$ & 0 & $3^{\frac{n-2}{3}-2}$ & $3^{\frac{n-2}{3}-2}$ & $3^{\frac{n-2}{3}-2}$ & $3^{\frac{n-2}{3}-2}$ & $3^{\frac{n-2}{3}-2}$ \\
			\hline   $\overline{m}(T-N(v))$&0 & $3^{\frac{n-2}{3}-1}$ & $3^{\frac{n-2}{3}-2}$ &  0 & $3^{\frac{n-2}{3}-2}$ & $3^{\frac{n-1}{3}-2}$ & 0\\
			\hline
		\end{tabular}
	\end{center}\vspace{-0.7cm}
\end{table}
\end{appendix}

\end{document}